\newcommand{\replace}[2]{{\color{black}#2}}
\newcommand{\note}[1]{}
\newcommand{\curlyE}{\mathcal{E}}
\newcommand{\mF}{\mathcal{F}}
\newcommand{\mP}{{\mathcal{P}}}
\newcommand{\mpt}{{\mathcal{P}_2}}
\newcommand{\mptr}{{\mathcal{P}_2(\R)}}
\newcommand{\mpta}{{\mathcal{P}_2^a}}
\newcommand{\mptra}{{\mathcal{P}_2^a(\R)}}
\newcommand{\mW}{\mathcal{W}}
\newcommand{\rhot}{\rho_{\tau}}
\newcommand{\etat}{\eta_{\tau}}
\newcommand{\rhotn}{\rho_{\tau}^{n}}
\newcommand{\etatn}{\eta_{\tau}^{n}}
\newcommand{\rhotnn}{\rho_{\tau}^{n+1}}
\newcommand{\etatnn}{\eta_{\tau}^{n+1}}
\newcommand{\gammat}{\gamma_\tau}
\newcommand{\gammatn}{\gamma_\tau^n}
\newcommand{\gammatnn}{\gamma_\tau^{n+1}}
\newcommand{\R}{\mathbb{R}}
\newcommand{\Rd}{{\mathbb{R}^{d}}}
\newcommand{\argmin}{\mathrm{argmin}}
\newcommand{\sign}{\mathrm{sign}}
\newcommand{\N}{\mathbb{N}}
\newcommand{\ddt}{\frac{\mathrm{d}}{\mathrm{d} t}}
\renewcommand{\d}{\,\mathrm{d}}
\renewcommand{\sign}{\mathrm{sign}}
\newcommand{\A}{\mathcal{A}}
\newcommand{\E}{\mathcal{E}}
\newcommand{\G}{\mathcal{G}}
\newcommand{\sau}{\bm{S}_{\A}^1}
\newcommand{\sad}{\bm{S}_{\A}^2}
\newcommand{\seut}{\bm{S}_{\E}^{1,t}}
\newcommand{\sedt}{\bm{S}_{\E}^{2,t}}
\newcommand{\sgut}{\bm{S}_{\G}^{1,t}}
\newcommand{\sgdt}{\bm{S}_{\G}^{2,t}}
\newcommand{\saut}{\bm{S}_{\A}^{1,t}}
\newcommand{\sadt}{\bm{S}_{\A}^{2,t}}
\newtheorem{remark}{Remark}
\renewcommand{\div}[0]{{\rm{div}}}
\newtheorem{theorem}{Theorem}
\newtheorem{lemma}[theorem]{Lemma}
\newtheorem{proposition}{Proposition}
\newtheorem{definition}[theorem]{Definition}
\title[System of continuity equations with Newtonian interactions]{Measure solutions to a system of continuity equations driven by Newtonian nonlocal interactions}
\author[J.A. Carrillo, M. Di Francesco, A. Esposito, S. Fagioli, and M. Schmidtchen]{}
\subjclass{Primary: 45K05, 35A15; Secondary: 92D25, 35L45, 35L80;}
\keywords{systems of aggregation equations; Newtonian potentials; uniqueness of solutions; gradient flows; long time asymptotics.}
\email{carrillo@imperial.ac.uk}
\email{marco.difrancesco@univaq.it}
\email{antonio.esposito3@graduate.univaq.it}
\email{simone.fagioli@univaq.it}
\email{m.schmidtchen15@imperial.ac.uk}
\thanks{$^*$ Corresponding author: Jos\'e A. Carrillo}
\begin{document}

\maketitle

\centerline{\scshape Jos\'{e} Antonio Carrillo}
\medskip
{\footnotesize
   \centerline{Department of Mathematics, Imperial College London}
    \centerline{London SW7 2AZ, United Kingdom}
}
\medskip

\centerline{\scshape Marco Di Francesco}
\medskip
{\footnotesize
    \centerline{DISIM - Department of Information Engineering, Computer Science and Mathematics, University of L'Aquila}
    \centerline{Via Vetoio 1 (Coppito) 67100 L'Aquila (AQ) - Italy}
} 

\medskip

\centerline{\scshape Antonio Esposito}
\medskip
{\footnotesize
        \centerline{DISIM - Department of Information Engineering, Computer Science and Mathematics, University of L'Aquila}
    \centerline{Via Vetoio 1 (Coppito) 67100 L'Aquila (AQ) - Italy}
}

\medskip

\centerline{\scshape Simone Fagioli}
\medskip
{\footnotesize
       \centerline{DISIM - Department of Information Engineering, Computer Science and Mathematics, University of L'Aquila}
    \centerline{Via Vetoio 1 (Coppito) 67100 L'Aquila (AQ) - Italy}
}

\medskip

\centerline{\scshape Markus Schmidtchen}
\medskip
{\footnotesize
    \centerline{Department of Mathematics, Imperial College London}
    \centerline{London SW7 2AZ, United Kingdom}
}

\bigskip

\centerline{(Communicated by the associate editor name)}

\begin{abstract}
	We prove global-in-time existence and uniqueness of measure solutions of a nonlocal interaction system of two species in one spatial dimension. For initial data including atomic parts we provide a notion of gradient-flow solutions in terms of the pseudo-inverses of the corresponding cumulative distribution functions, for which the system can be stated as a gradient flow on the Hilbert space $L^2(0,1)^2$ according to the classical theory by Br\'ezis. For absolutely continuous initial data we construct solutions using a minimising movement scheme in the set of probability measures. In addition we show that the scheme preserves finiteness of the $L^m$-norms for all $m\in [1,+\infty]$ and of the second moments. We then provide a characterisation of equilibria and prove that they are achieved (up to time subsequences) in the large time asymptotics. We conclude the paper constructing two examples of non-uniqueness of measure solutions emanating from the same (atomic) initial datum, showing that the notion of gradient flow solution is necessary to single out a unique measure solution.
\end{abstract}


\section{Introduction}
In this work we consider a particular instance of the following nonlocal interaction system for the evolution of two probability measures $\rho$ and $\eta$ on the whole real line
\begin{equation}\label{eq:system}
\begin{cases}
\dfrac{\partial\rho}{\partial t}=\dfrac{\partial}{\partial x} \big(\rho H_1' \star\rho + \rho K_1'\star\eta\big),\\[1em]
\dfrac{\partial \eta}{\partial t}=\dfrac{\partial}{\partial x} \big(\eta H_2' \star\eta + \eta K_2'\star\rho \big).
\end{cases}
\end{equation}
Here $H_1, H_2$ model the way any two agents of the same species interact with one  another (so-called \emph{self-interaction} potentials, or \emph{intraspecific} interaction potentials). Respectively,  $K_1,K_2$ are called the \emph{cross-interaction} potentials, or \emph{interspecific} interaction potentials, as they describe the interaction between any two agents of opposing species.

This model can be easily understood as a natural extension of the well-known aggregation equation (cf. \cite{BCL,BLR11,CDFFLS,MEK99,TBL06}) to two species. This link was first established in the paper \cite{DFF} as the continuous counterpart of a system of ordinary differential equations. More precisely, for $M,N \in \N$ suppose $(x_i)_{i=1}^N$ and $(y_i)_{i=1}^M$ denote the locations of agents of two different species, each of them with masses $\frac{1}{N}$ and $\frac{1}{M}$ respectively. Then, assuming the velocity of any agent is given as an average of the forces exhibited by all other agents upon that agent, one gets
\begin{align*}
	\dot x_i &= - \frac1N \sum_{j\neq i}H_1'(x_i-x_j) - \frac1M\sum_{j} K_1'(x_i-y_j), \quad i=1,...,N,\\
	\dot y_i &= - \frac1M \sum_{j\neq i}H_2'(y_i-y_j) - \frac1N\sum_{j} K_2' (y_i-x_j), \quad i=1,...,M.
\end{align*}
The choice of the interaction potentials depends on the application or the phenomena of interest. In particular in mathematical biology contexts the potentials are often assumed to be radial, i.e. $W(x) = w_W(|x|)$, for $W\in\{H_i, K_i\,|\, i=1,2\}$, i.e. they only depend on the relative distance between any two agents. An interaction potential is said to be \emph{attractive} if $w_W'(|x|) > 0$ and \emph{repulsive} if  $w_W'(|x|) < 0$. The existence theory developed in \cite{DFF} covers the case of $C^1$-potentials $H_i$, $K_i$, $i=1,2$ (with suitable growth conditions at infinity) and provides a semigroup defined in the space of probability measures with finite second moment equipped with the Wasserstein distance, in the spirit of \cite{AGS}. More specifically, the JKO scheme, \cite{JKO}, can be adopted in the special case $K_1=K_2=K$. Then Eq. \eqref{eq:system} can actually be seen as the gradient flow of the functional
\begin{equation}\label{eq:functional_general}
\mF(\rho,\eta):=\frac{1}{2}\int_\R H_1\star\rho\, \d \rho + \frac{1}{2}\int_\R H_2\star\eta\, \d \eta + \int_\R K\star\eta\, \d \rho.
\end{equation}
In this case a slightly lower regularity needs to be required on the potentials $H_1, H_2, K$ as long as all of them are convex up to a quadratic perturbation. Thus, uniqueness can be proven via the notion of $\lambda$-convexity along geodesics of $\mF$, see \cite{McC97,AGS}.

Common interaction potentials for the one species case include power laws $W(x)=|x|^p/p$, as for instance in the case of granular media models, cf. \cite{BCP97, Tos00}. Another possible choice is a combination of power laws of the form $W(x)=|x|^a/a- |x|^b/b$, for $-d<b<a$ where $d$ is the space dimension. These potentials, featuring short-range repulsion and long-range attraction, are typically chosen in the context of swarming models, cf. \cite{BCLR2,BKSUV,CDM16,CHS17,CHM14,FH,FHK,KSUB}. Other typical choices include characteristic functions of sets or Morse potentials
$$
	W(x) = -c_a \exp(-|x|/l_a) + c_r \exp(-|x|/l_r),
$$
or their regularised versions
$
W_p(x) = -c_a \exp(-|x|^p/l_a) + c_r \exp(-|x|^p/l_r),
$
where $c_a,c_r$ and $l_a,l_r$ denote the interaction strength and radius of the attractive (resp. repulsive) part and $p\geq2$,  cf. \cite{CHM14,CMP13,OCBC06}. These potentials display a decaying interaction strength, \textit{e.g.} accounting for biological limitations of visual, acoustic or olfactory sense. The asymptotic behaviour of solutions to one single equation where the repulsion is modelled by non-linear diffusion and the attraction by non-local forces has also received lots of attention in terms of qualitative properties, stationary states and metastability, see \cite{BFH,CCH1,CCH2,CCH15,EK,CCY}  and the references therein, as well as its two-species counterparts, cf. e.g. \cite{DiFranEspFag, CHS17, CFS18, BDFS}, and references therein.

As set out earlier we shall study a particular instance of the above system where all interactions are modelled by Newtonian potentials. More precisely, by setting $N(x):=|x|$,
we consider repulsive Newtonian intraspecific interactions and attractive Newtonian interspecific interactions, i.e. we will deal with the system
\begin{equation}\label{eq:first}
\begin{cases}
\partial_t\rho=\partial_x(-\rho N'\star\rho + \rho N'\star\eta),\\
\partial_t\eta=\partial_x(-\eta N'\star\eta + \eta N'\star\rho).
\end{cases}
\end{equation}
Following \eqref{eq:functional_general}, there is  a natural functional that can be associated to system \eqref{eq:first}, namely
\begin{equation}
\label{eq:enfunctional}
\mF(\rho,\eta):=-\frac{1}{2}\int_\R N\star\rho\, \d \rho - \frac{1}{2}\int_\R N\star\eta\, \d \eta + \int_\R N\star\eta\, \d \rho.
\end{equation}
We mention at this stage that this choice of the functional does not fit the set of assumptions in \cite{DFF}, in that the (repulsive) intraspecific parts of $\mF$ are not defined through convex potential (up to a quadratic perturbation).

The corresponding equation for one species has been attracting a lot of interest. In \cite{BCL} and \cite{BLR11}, the authors provide an $L^\infty$ and an $L^p$-theory for the aggregation equation $\partial_tu+\div(uv)=0$, $v=-\nabla K\star u$, with initial data in $\mpt(\Rd)\cap L^p(\Rd)$, where $d\ge2$ and $\mpt(\Rd)$ denotes the set of probability measures with bounded second order moments. They consider radially symmetric kernels whose singularity is of order $|x|^\alpha$, $\alpha>2-d$, at the origin. In particular, the authors prove local well-posedness in $\mpt(\Rd)\cap L^p(\Rd)$ for $p>p_s$, where $p_s=p_s(d,\alpha)$. Moreover, when $K(x)=|x|$, the exponent $p_s=\frac{d}{d-1}$ is sharp since for any $p<\frac{d}{d-1}$ the solution instantaneously concentrates mass at the origin for initial data in $\mpt(\Rd)\cap L^p(\Rd)$. Global well-posedness of solutions with initial data in $\mpt(\Rd)$ was proven in \cite{CDFFLS} for a general class of potentials including in particular $K(x)=|x|$. The gradient flow structure was crucial to show a unique continuation after blow-up of solutions to the aggregation equation. Let us also mention \cite{BLL12} that provides a well-posedness theory of compactly supported $L^1\cap L^\infty$-solutions for the Newtonian potentials in $d\geq 2$. The gradient flow structure introduced in \cite{CDFFLS} in the particular case of $K(x)=|x|$ in one dimension was further developed in  \cite{BCDFP}, where the authors prove the equivalence of  the Wasserstein gradient flow for
$$
\partial_t\rho=\partial_x(\rho\partial_xW\star\rho), \qquad x\in\R, \quad t>0,
$$
with $W(x)=-|x|$ or $W(x)=|x|$, and the notion of entropy solution of a scalar nonlinear conservation law of Burgers-type
$$
\partial_tF+\partial_xg(F)=0, \qquad x\in\R, \quad t>0,
$$
where
$$
g(F)=F^2-F \qquad \text{or}\qquad g(F)=-F^2+F,
$$
in the repulsive or attractive case respectively, being $F(t,x)=\int_{-\infty}^{x}\rho(t,x)\d x$. Such a result is relevant in particular in the repulsive case, as it shows that all point particles initial data evolve into an $L^1$-density on $t\in(0,+\infty)$ as a simple consequence of the uniqueness of entropy solutions to the corresponding scalar conservation law, \cite{kru}. More precisely, a point particle $\rho_0=\delta_0$ in the repulsive case corresponds to the initial datum $F_0=\mathbf{1}_{[0,+\infty)}$ for the equation $F_t+(F^2-F)_x = 0$, and the discontinuity can be resolved (in a weak solution sense) either via a stationary Heaviside function or through a \emph{rarefaction wave} with time-decaying slope connecting the two states $0$ and $1$. As the flux $g(F)=F^2-F$ is convex, the latter is the only admissible solution in the entropy sense (see e.g. \cite{Dafermos}). Therefore, the equivalence result in \cite{BCDFP} implies that the distributional derivative $\rho=\partial_x F$ is the only gradient flow solution to the repulsive aggregation equation $\rho_t = -\partial_x (\rho \partial_x(|x|\star \rho))$. Notice that such a solution satisfies $\rho(t,\cdot)\in L^\infty(\R)$ for all $t>0$, whereas the initial condition $\rho_0$ is an atomic measure.

The occurrence of such a \emph{smoothing} effect in the one-species repulsive case suggests that similar phenomena may be observed in the two-species case, at least in one space dimension. Understanding such an issue is one of the purposes of this work. However, the equivalence to the $2\times 2$ system of conservation laws
\begin{equation}\label{eq:hypsys_intro}
\begin{cases}
	\partial_t F +2(F-G) \partial_x F=0,\\
    \partial_t G +2(G-F) \partial_x G=0,
\end{cases}
\qquad F(t,x)=\int_{-\infty}^x \rho(t,y) \d y,\qquad G(t,x)=\int_{-\infty}^x \eta(t,y) \d y,
\end{equation}
does not provide any useful insights in this case, as we shall discuss  in detail in Section \ref{sec:atomic}. We would like to stress at this stage that the persistence of an atomic part for one of the two species in \eqref{eq:first} would make the definition of measure solutions rather difficult, as the velocity fields are given by convolutions of the solution with a discontinuous function. In the $(F,G)$ version \eqref{eq:hypsys_intro} this corresponds to the impossibility of e.g. multiplying a discontinuous function $F-G$ by an atomic measure $\partial_x F$. On the other hand, we shall see that the equivalence to $L^2(0,1)^2$-gradient flows in the pseudo-inverse formalism (see e.g. \cite{Brenier,CT,BCDFP}) provides a natural way to state a suitable notion of solution for \eqref{eq:first} with measure initial data giving rise to a \emph{unique} solution for all times. As for the $L^p$-regularity of solutions, we mention here that a system similar to \eqref{eq:first} with the addition of linear diffusion in both components was studied in the context of semiconductor device modelling, see e.g. \cite{arnold} in which solutions are shown to maintain the finiteness of the $L^p$-norms for $p\in (1,+\infty)$. As we will show in our paper, the same holds in the one-dimensional diffusion free case \eqref{eq:first}. However, when initial data feature an atomic part, the attractive part in the functional $\mF$ may inhibit solutions to instantaneously become $L^1$-densities. This may happen for instance when the two species share an atomic part at the same position initially, see Section \ref{sec:atomic} below.

Another interesting issue related to \eqref{eq:first} is its asymptotic behaviour for large times. The one species case features total collapse, i.e. the formation of one point particle in a finite time in the attractive case with all the mass of the system, see \cite{CDFFLS}, and large time decay to zero in the repulsive case (as a consequence of the results in \cite{BCDFP} and on classical results on the large time behaviour for scalar conservation laws, see e.g. \cite{liu}). The asymptotic behaviour in the $2\times 2$ case \eqref{eq:first} is much more complex. Finite time concentration for smooth initial data cannot happen in the view of the non-expansiveness of the $L^p$-norms  proven in the present work. Similarly the case for the large time decay to zero is impossible, as we will construct explicit solutions featuring a steady atomic part for all times. We shall prove that the $\omega$-limit in a suitable topology for \eqref{eq:first} is a subset of $\{\rho=\eta\}$, which also coincides with the minimising set of the corresponding functional $\mF$.
The rest of this paper is organised as follows:
\begin{itemize}
  \item Section \ref{sec:preliminaries} contains preliminary concepts on gradient flows in  Wasserstein spaces and about the one-dimensional case in particular.
  \item Section \ref{sec:existence} deals with the existence and uniqueness of solutions. We first prove it in Subsection \ref{subsec:measure}, for the notion of solution introduced in Definition \ref{def:solutionmeasures}. In Subsection \ref{subsec:densities_case} we consider the case of densities as initial conditions, more precisely in $L^m(\R)$ for some $m\in (1,+\infty]$, and we show that a suitable notion of gradient flow solution in the Wasserstein sense (see Definition \ref{def:gradflow}) can be achieved via the Jordan-Kinderlehrer-Otto scheme, which also allows to prove that the $L^m$-regularity is maintained. In addition a uniform-in-time control of the second moment is obtained. Moreover, we prove that our solutions also satisfy Definition \ref{def:solutionmeasures} given the additional regularity. All the results on the absolutely continuous case are collected in Theorem \ref{thm:regularity}.
  \item Section \ref{sec:steady_states} contains a detailed study of the steady states for \eqref{eq:first}, as well as of the minimisers of \eqref{eq:enfunctional}. A characterisation of the steady states is provided in Proposition \ref{prop:char_steady}. A consequent result concerning the asymptotic behaviour is provided in Theorem \ref{thm:omega}.
  \item Section \ref{sec:atomic} describes two relevant examples of atomic initial data. In both cases, non uniqueness of weak measure solutions is shown, and the relevant gradient flow solution is singled out as well. These two examples allow to conclude interesting properties related with the occurrence or not of the smoothing effect (or lack thereof) of initial atomic parts, see Remarks \ref{rem:example1} and \ref{rem:example2}. The link with the hyperbolic system \eqref{eq:hypsys_intro} is described in detail in Subsection \ref{subsec:hyp} leading to several open problems.
\end{itemize}



\section{Preliminaries}\label{sec:preliminaries} This section is devoted to setting up the framework to show existence and uniqueness of  solutions to the system
\begin{subequations}
\label{eq:full_system}
\begin{equation}\label{eq:full_system_no_data}
\begin{cases}
\partial_t\rho=-\partial_x(\rho N'\star\rho)+\partial_x(\rho N'\star\eta), \\
\partial_t\eta=-\partial_x(\eta N'\star\eta)+\partial_x(\eta N'\star\rho),
\end{cases}
\end{equation}
with Newtonian interactions, $N(x)=|x|$.
Throughout the paper, $\rho=\rho(t)$ and $\eta=\eta(t)$ will be considered as time dependent curves with values on the set $\mP(\R)$ of probability measures on $\R$. System \eqref{eq:full_system_no_data} is equipped with initial data
\begin{align}
	\rho(0) = \rho_0, \qquad \mbox{and} \qquad \eta(0)=\eta_0,
\end{align}
\end{subequations}
for some $\rho_0,\eta_0\in \mathcal{P}(\R)$. Moreover, we write $\mptr$ to denote the set of  probability measures with finite second moment, i.e.
$$
\mptr=\left\{\mu\in\mP(\R) \, |\, m_2(\mu)<+\infty\right\},
\mbox{ where } m_2(\mu)=\int_{\R}|x|^2\, \d \mu(x).$$
In the following we shall use the symbol $\mptra$ referring to elements of $\mptr$ which are absolutely continuous with respect to the Lebesgue measure.
Consider a measure $\mu\in\mP(\R)$ and a Borel map $T:\R\to\R$. We denote by $\nu = T_{\#}\mu$ the push-forward of $\mu$ through $T$, defined by
$$
\nu(A)=\mu(T^{-1}(A))
$$
for all Borel sets $A\subset\R$. We refer to $T$ as the transport map pushing $\mu$ to $\nu$. Next let us equip the set $\mptr$ with the $2$-Wasserstein distance. For any measures $\mu,\nu\in \mptr$ it is defined as
\begin{equation}\label{wass}
W_2(\mu,\nu)=\left(\inf_{\gamma\in\Gamma(\mu,\nu)}\int_{\R^2}|x-y|^2\, \d  \gamma(x,y)\right)^{1/2},
\end{equation}
where $\Gamma(\mu,\nu)$ is the class of transport plans between $\mu$ and $\nu$, that is,
\begin{align*}
	\Gamma(\mu, \nu):= \{ \gamma\in \mP(\R^2)\,|\, \pi^1_{\#}\gamma = \mu, \,\pi^2_{\#}\gamma = \nu\},
\end{align*}
where $\pi^i:\R\times\R\rightarrow\R$, $i=1,2$, denotes the projection operator on the $i^\mathrm{th}$ component of the product space $\R^2$.  Setting $\Gamma_0(\mu,\nu)$ as the class of optimal plans, i.e. minimisers of \eqref{wass}, the Wasserstein distance becomes
$$
	W_2^2(\mu,\nu)=\int_{\R^2}|x-y|^2\, \d \gamma(x,y),
$$
for any $\gamma\in\Gamma_0(\mu,\nu)$. The set $\mptr$ equipped with the $2$-Wasserstein metric is a complete metric space which can be seen as a length space, see for instance \cite{AGS,S,V1,V2}.

\begin{remark}
\label{rem:inequalitymom}
 Given two measures $\mu,\nu\in\mptr$, by using  the elementary inequality $|y|^2\le2|x|^2+2|x-y|^2$ and the above definition of the 2-Wasserstein distance, one can easily show the inequality
$$
m_2(\nu)\le2m_2(\mu)+2W_2^2(\mu,\nu),
$$
which will be used later on.
\end{remark}

Next we introduce the notion of the Fr\'echet sub-differential in the space of probability measures.
\begin{definition}[Fr\'{e}chet sub-differential in $\mptr$]\label{def:subdiff}
Let $\phi:\mptr\to(-\infty,+\infty]$ be a proper and lower semicontinuous functional, and let $\mu\in D(\phi):=\{\mu\in \mptr\,|\, \phi(\mu)<\infty\}$. We say that $v\in L^2(\R; \mu)$ belongs to the \emph{Fr\'{e}chet sub-differential} at $\mu$, denoted by $\partial\phi(\mu)$, if
$$
\phi(\nu)-\phi(\mu)\ge\inf_{\gamma\in\Gamma_0(\mu,\nu)}\int_{\R\times\R}v(x)(y-x)\, \d \gamma(x,y)+o(W_2(\mu,\nu)).
$$
Moreover, if $\partial\phi(\mu)\neq\emptyset$ we denote by $\partial^0\phi(\mu)$ the element of minimal $L^2(\R; \mu)$-norm in $\partial\phi(\mu)$.
\end{definition}
This definition will play a crucial role when introducing the notion of gradient flow solutions to system \eqref{eq:full_system} later on, cf. Section \ref{subsec:densities_case}.

A curve $\mu:[0,1]\to\mptr$ is a \emph{constant speed geodesic} if $W_2(\mu(s),\mu(t))=(t-s)W_2(\mu(0),\mu(1))$ for all $0\le s\le t\le1$. Due to \cite[Theorem 7.2.2]{AGS}, a constant speed geodesic connecting $\mu$ and $\nu$ can be written as
$$
	\gamma_t=\left((1-t)\pi^1+t\pi^2\right)_{\#}\gamma,
$$
where $\gamma\in\Gamma_0(\mu,\nu)$ and thus $\mu=\gamma_0$ and $\nu=\gamma_1$. In the literature $\gamma_t$ is also known as \emph{McCann interpolation}, cf. \cite{McC97}. Next, we introduce a modified notion of convexity, the so-called $\lambda$-geodesic convexity, which is of paramount importance in the study of gradient flows in the  metric space $\mptr$.
\begin{definition}[$\lambda$-geodesic convexity]
Let $\lambda\in \R$. A functional $\phi:\mptr\to(-\infty,+\infty]$ is said to be \emph{$\lambda$-geodesically convex} in $\mptr$ if for every  $\mu,\nu\in\mptr$ there exists $\gamma\in\Gamma_0(\mu,\nu)$ such that
$$
\phi(\gamma_t)\le(1-t)\phi(\mu)+t\phi(\nu)-\frac{\lambda}{2}t(1-t)W_2^2(\mu,\nu),
$$
for any  $t\in[0,1]$.
\end{definition}
It is necessary to recall that the $\lambda$-geodesic convexity is strictly linked with the concept of \textit{k-flow}.

\begin{definition}[$k$-flow]
A semigroup $S_{\phi}:[0,+\infty]\times\mptr\to\mptr$ is a $k$-flow for a functional $\phi:\mptr \to (-\infty,\infty]$ with respect to $W_2$ if, for an arbitrary $\mu\in\mptr$, the curve $t\mapsto S_\phi^t\mu$ is absolutely continuous on $[0,+\infty[$ and satisfies the evolution variational inequality (\textbf{E.V.I.})
\begin{equation}
\frac{1}{2}\frac{\d^+}{\d t}W_2^2(S_\phi^t\mu,\tilde{\mu})+\frac{k}{2}W_2^2(S_\phi^t\mu,\tilde{\mu}) \le \phi(\tilde{\mu})-\phi(S_\phi^t\mu)
\end{equation}
for all $t>0$, with respect to any reference measure $\tilde{\mu}\in\mptr$ such that $\phi(\tilde{\mu})<\infty$.
\end{definition}
As already mentioned, the previous concepts of $k$-flow and $\lambda$-convexity are closely intertwined. Indeed, a $\lambda-$convex functional possesses a uniquely determined $k-$flow for $k\geq \lambda$, and if a functional possesses a $k-$flow, then it is $\lambda-$convex with $\lambda \geq k$, cf. Refs.  \cite{AGS,DFM,MMCS}, for further details. The notion of k-flow will be of great help in the use of the flow interchange technique, cf. Subsection \ref{subsec:densities_case}.

Now, let $\mu_t\in AC([0,+\infty);\mptr)$ be an absolutely continuous curve in $\mptr$. We can define the metric derivative of $\mu_t$  as
$$
|\mu_t'|(t):=\limsup_{h\to0}\frac{W_2(\mu_{t+h},\mu_t)}{|h|},
$$
which is well-defined almost everywhere since $\mu_t$ is an absolutely continuous curve, cf. \cite{AGS,S}.

As system \eqref{eq:full_system} describes the evolution of two interacting species, it is necessary to work on the product space $\mptr\times\mptr$ equipped with the 2-Wasserstein product distance, which is defined in the canonical way
$$
\mW_2^2(\gamma,\tilde \gamma)=W_2^2(\rho,\tilde \rho)+W_2^2(\eta,\tilde \eta),
$$
for all $\gamma=(\rho,\eta), \tilde \gamma=(\tilde \rho,\tilde \eta)$ belonging to $\mptr\times\mptr$. Now, let us introduce another crucial tool in our paper. For a given $\mu\in\mptr$ its cumulative distribution function is given by
\begin{equation}\label{eq:distribution-functions}
	F_\mu(x)=\mu((-\infty,x]).
\end{equation}
Since $F_\mu$ is a non-decreasing, right continuous function such that
\begin{align*}
	\lim_{x\rightarrow -\infty} F_\mu(x) = 0, \quad \mbox{and} \quad 	\lim_{x\rightarrow +\infty} F_\mu(x) = 1,
\end{align*}
 we may define the pseudo-inverse function $X_\mu$ associated to $F_\mu$, by
\begin{equation}
	\label{eq:pseudoinverse_def}
	X_\mu(s):=\inf_{x\in \R}\{F_\mu(x)>s\},
\end{equation}
for any $s\in (0,1)$. It is easy to see that $X_\mu$ is right-continuous and non-decreasing as well. Having introduced the pseudo-inverse, let us now recall some of its important properties. First we notice that it is possible to pass from $X_\mu$ to $F_\mu$ as follows
\begin{equation}
	\label{eq:F_intermsof_X}
	F_\mu(x)=\int_0^1\mathbf{1}_{(-\infty,x]}(X_\mu(s))\, \d s=|\{X_\mu(s)\le x\}|.
\end{equation}

For any probability measure $\mu\in\mptr$ and the pseudo-inverse , $X_\mu$, associated to it, we have
\begin{equation}
	\label{eq:CoV_PseudoInverse}
	\int_\R f(x)\, \d \mu(x)=\int_0^1f(X_\mu(s))\, \d s,
\end{equation}
for every bounded continuous function $f$. Moreover, for $\mu,\nu\in\mptr$, the Hoeffding-Fr\'echet theorem \cite[Section 3.1]{RR} allows us to represent the 2-Wasserstein distance, $W_2(\mu,\nu)$, in terms of the associated pseudo-inverse functions as
\begin{equation}
	\label{eq:W2PseudoInverses}
	W_2^2(\mu,\nu)=\int_0^1|X_\mu(s)-X_\nu(s)|^2\, \d s,
\end{equation}
since the optimal plan is given by $(X_\mu(s)\otimes X_\nu(s))_{\#}\mathcal{L}$, where $\mathcal{L}$ is the Lebesgue measure on the interval $[0,1]$, cf. also \cite{V1,CT}. We have seen that for every $\mu\in\mptr$ we can construct a non-decreasing $X_\mu$ according to \eqref{eq:pseudoinverse_def}, and by the change of variables formula \eqref{eq:CoV_PseudoInverse} we also know that $X_\mu$ is square integrable. We now recall that this mapping is indeed a distance-preserving bijection between the space of probability measure with finite second moments and the convex cone of non-decreasing $L^2$-functions
\begin{equation}\label{eq:cone}
	\mathcal{C}:=\{f\in L^2(0,1)\,|\,f\ \mbox{is non-decreasing}\} \subset L^2(0,1).
\end{equation}

\begin{proposition}[$\mu\mapsto X_\mu$  is an isometry]
	The map
	\begin{equation}\label{eq:isometry}
		\Psi:\mptr\ni \mu \mapsto X_\mu\in\mathcal{C},
	\end{equation}
	mapping probability measures onto the convex cone of non-decreasing $L^2$-functions is an isometry.
\end{proposition}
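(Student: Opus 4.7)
The plan is to unpack the word \emph{isometry} into three separate statements: $\Psi$ is well-defined (so the codomain is really $\mathcal{C}$), $\Psi$ preserves distances, and $\Psi$ is a bijection. The distance preservation is essentially handed to us by \eqref{eq:W2PseudoInverses}, while well-definedness and bijectivity require small but genuine arguments.

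First I would check that $X_\mu\in\mathcal{C}$ for every $\mu\in\mptr$. Monotonicity and right-continuity of $X_\mu$ follow directly from \eqref{eq:pseudoinverse_def} and the corresponding properties of $F_\mu$, and were already noted in the text. For the $L^2$ integrability I would first extend the change-of-variables identity \eqref{eq:CoV_PseudoInverse} from bounded continuous $f$ to nonnegative Borel $f$ by a monotone approximation argument (truncating $f(x)=x^2$ by $f_n(x)=\min\{x^2,n\}\chi_{[-n,n]}$, applying \eqref{eq:CoV_PseudoInverse} to each $f_n$, and passing to the limit by monotone convergence on both sides). This yields
\begin{equation*}
\int_0^1 X_\mu(s)^2\,\d s = \int_\R x^2\,\d\mu(x) = m_2(\mu) < +\infty,
\end{equation*}
so $X_\mu\in\mathcal{C}\subset L^2(0,1)$.

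Next, the isometry identity $W_2(\mu,\nu)=\|X_\mu-X_\nu\|_{L^2(0,1)}$ is exactly \eqref{eq:W2PseudoInverses}, which the authors already record as a consequence of the Hoeffding--Fr\'echet theorem; I would simply invoke it. Injectivity of $\Psi$ then falls out for free: if $X_\mu=X_\nu$ as elements of $L^2(0,1)$, the isometry gives $W_2(\mu,\nu)=0$, hence $\mu=\nu$. Alternatively, one can read off $F_\mu$ from $X_\mu$ via \eqref{eq:F_intermsof_X} and use that $\mu$ is determined by its cumulative distribution function.

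The step requiring the most care is surjectivity. Given an arbitrary non-decreasing $f\in L^2(0,1)$, I would set $\mu := f_{\#}\mathcal{L}$, where $\mathcal{L}$ denotes the Lebesgue measure on $(0,1)$. The change-of-variables formula for push-forwards gives $\int_\R x^2\,\d\mu(x)=\int_0^1 f(s)^2\,\d s<+\infty$, so $\mu\in\mptr$. It then remains to show $X_\mu=f$ a.e.\ on $(0,1)$. For this I would compute $F_\mu(x)=\mu((-\infty,x])=\mathcal{L}(\{s\in(0,1) : f(s)\leq x\})$, which by monotonicity of $f$ equals $\sup\{s\in(0,1) : f(s)\le x\}$ (up to a set of measure zero where $f$ may have flat pieces). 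Plugging this into \eqref{eq:pseudoinverse_def} and using the elementary fact that taking pseudo-inverses twice returns a right-continuous monotone function to itself, one obtains $X_\mu(s)=f(s)$ for almost every $s\in(0,1)$. The main technical nuisance here is handling the sets of constancy of $f$ and the jumps of $F_\mu$ cleanly; I would treat them by the standard duality $\{F_\mu>s\}=\{x : X_\mu(s)\leq x\}^{c}$ combined with right-continuity, which reduces the whole verification to a pointwise check at continuity points of $f$ and $X_\mu$, i.e.\ on a full-measure subset of $(0,1)$.
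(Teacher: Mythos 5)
Your proposal is correct and complete. Note, however, that the paper itself does not prove this proposition at all: it is stated as a recalled fact, with the only substantive ingredient being the identity \eqref{eq:W2PseudoInverses}, which the authors attribute to the Hoeffding--Fr\'echet theorem and to the references \cite{V1,CT}. So what you have done is supply the standard proof that the paper omits. Your decomposition (well-definedness, distance preservation via \eqref{eq:W2PseudoInverses}, injectivity as a consequence, surjectivity via $\mu=f_{\#}\mathcal{L}$) is exactly the canonical route, and the surjectivity step --- which is the only part not already implicit in the paper's preliminaries --- is handled correctly: for non-decreasing $f$ one gets $F_{f_\#\mathcal{L}}(x)=\sup\{s: f(s)\le x\}$ exactly (not merely up to a null set; the ``up to measure zero'' caveat is only needed for the final identification $X_\mu(s)=f(s+)=f(s)$ at continuity points of $f$, hence a.e.). Two cosmetic remarks. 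First, your truncation $f_n(x)=\min\{x^2,n\}\chi_{[-n,n]}(x)$ is not continuous at $x=\pm n$, whereas \eqref{eq:CoV_PseudoInverse} is stated for bounded \emph{continuous} test functions; simply drop the indicator and use $f_n(x)=\min\{x^2,n\}$, which is bounded, continuous, and increases monotonically to $x^2$. Second, the appeal to ``taking pseudo-inverses twice returns a right-continuous monotone function to itself'' can be made fully explicit in one line: $F_\mu(x)>s$ holds iff there exists $u>s$ with $f(u)\le x$, whence $X_\mu(s)=\inf_{u>s}f(u)=f(s+)$. Neither point affects the validity of the argument.
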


Let us introduce the notion of sub-differential for functions in $L^2(0,1)^2$.
\begin{definition}[Fr\'echet sub-differential in $L^2(0,1)^2$]
For a given proper and lower semi-continuous functional $\mathfrak{F}$ on $L^2(0,1)^2$ we say that $Z = (X, Y)\in L^2(0,1)^2$ belongs to the sub-differential of $\mathfrak{F}$ at $\tilde Z=(\tilde X, \tilde Y)\in L^2(0,1)^2$ if and only if
\begin{equation*}
\mathfrak{F}(R)-\mathfrak{F}(\tilde Z)\ge\int_0^1 \left[X (s)(R_1(s)-\tilde X(s))+Y (s)(R_2(s)- \tilde Y(s))\right]\, \d s+o(\|R-\tilde Z\|),
\end{equation*}
as $\|R-\tilde Z\|\to0$, with the notation $R=(R_1,R_2)\in L^2(0,1)^2$. The sub-differential of $\mathfrak{F}$ at $\tilde Z$ is denoted by $\partial \mathfrak{F}(\tilde Z)$, and if $\partial \mathfrak{F}(\tilde Z)\neq \emptyset$ then we denote by $\partial^0 \mathfrak{F}(\tilde Z)$ the element of minimal $L^2$-norm of $\partial \mathfrak{F}(\tilde Z)$.
\end{definition}

\begin{remark}[Mass normalisation]
  In the most general situation possible, the two species, $\rho$ and $\eta$, have different masses $M_\rho, M_\eta>0$. The change of variables
  \[\widetilde{\rho}=\frac{1}{M_\rho}\rho,\qquad \text{and} \qquad\widetilde{\eta}=\frac{1}{M_\eta}\eta,\]
  allows to rewrite system \eqref{eq:full_system_no_data} (by dropping the tildes) as
\begin{equation*}
\begin{cases}
\partial_t\rho=-M_\rho \partial_x(\rho N'\star\rho)+M_\eta\partial_x(\rho N'\star\eta), \\
\partial_t\eta=-M_\eta\partial_x(\eta N'\star\eta)+M_\rho\partial_x(\eta N'\star\rho),
\end{cases}
\end{equation*}
and the gradient flow structure in the product Wasserstein metric $\mW_2$ would be lost because the two interspecific potentials are different. However, this problem can be overcome by using a weighted version of the $\mW_2$ product distance of the form
\[\mW_2^2((\rho,\eta),(\widetilde{\rho},\widetilde{\eta}))=W_2^2(\rho,\widetilde{\rho})+\frac{M_\eta}{M_\rho}W_2^2(\eta,\widetilde{\eta}),\]
as done in \cite{DFF}. As these multiplying constants $M_\rho, M_\eta$ do not bring significant technical difficulties (while making the notation much heavier), for the sake of convenience we shall assume throughout the whole paper that $M_\rho=M_\eta=1$, unless specified otherwise.
\end{remark}

\section{Existence and Uniqueness}\label{sec:existence}
In this section we provide the mathematical theory for system \eqref{eq:full_system}. In the first subsection we will deal with the case of general probability measures in $\mptr\times\mptr$ as initial conditions. In the second subsection we shall restrict ourselves to the case of measures that are absolutely continuous with respect to the Lebesgue measure. In the former we will provide a notion of solutions that is linked to the concept of gradient flows in Hilbert space \emph{a-la} Br\'ezis \cite{Brezis}, working with the pseudo-inverse formulation of the problem. In the latter, a better regularity can be achieved and the theory is developed in the framework of gradient flows in Wasserstein space, \cite{AGS}.
Before entering the details, let us recall the definition of \textit{interaction energy functional} $\mF$ in \eqref{eq:enfunctional}: for all $(\rho,\eta)\in\mptr\times\mptr$ we set
\begin{equation*}
\mF(\rho,\eta):=-\frac{1}{2}\int_\R N\star\rho\, \d \rho - \frac{1}{2}\int_\R N\star\eta\, \d \eta + \int_\R N\star\eta\, \d \rho,
\end{equation*}
which is well defined due to the control on the second order moment.
\subsection{General Measures Initial Data}\label{subsec:measure}
In this first subsection we will use the concept of $L^2$-gradient flow by studying  system \eqref{eq:system} in terms of the pseudo-inverse functions $X_\rho$ and $X_\eta$ defined in Section \ref{sec:preliminaries}.
Throughout the rest of this section we set $X:=X_\rho$ and $Y:=X_\eta$ to simplify the notation. Hence, system \eqref{eq:full_system} (formally) becomes
\begin{equation}\label{eq:system-pseudo-inverse}
\begin{cases}
	\dfrac{\partial X}{\partial t} = \displaystyle\int_0^1 \sign(X(z)- X(\xi))\d \xi - \int_0^1 \sign(X(z) - Y(\xi))\d \xi,\\[0.4cm]
    \dfrac{\partial Y}{\partial t} = \displaystyle\int_0^1 \sign(Y(z) - Y(\xi))\d \xi - \int_0^1 \sign(Y(z) - X(\xi))\d \xi,
\end{cases}
\end{equation}
for $s\in(0,1)$ and $t\ge0$, cf. \cite{B,CT,lt} for similar computations. In order to give a meaning to the above system in the case of $\mu$ or $\eta$ having atoms, we use the convention $\sign(0)=0$. In terms of the pseudo inverses $X$ and $Y$, the functional $\mF(\rho,\eta)$ becomes
\begin{equation}
\label{eq:functional_pseudoinverse}
\begin{split}
\mF(\rho,\eta)=\mathfrak{F}(X,Y)=&-\frac{1}{2}\int_0^1\int_0^1|X(z)-X(\xi)|\, \d z\, \d \xi-\frac{1}{2}\int_0^1\int_0^1|Y(z)-Y(\xi)|\, \d z\, \d \xi\\&+\int_0^1\int_0^1|X(z)-Y(\xi)|\, \d z\, \d \xi.
\end{split}
\end{equation}
In the remainder of this section we shall see that \eqref{eq:system-pseudo-inverse} is the $L^2\times L^2$-gradient flow associated to (an extended version of) the energy functional \eqref{eq:functional_pseudoinverse}.

\begin{remark}\label{rem:Fsumof3}
Later on in the paper we need to distinguish between the self-interaction part of $\mathfrak{F}$ and its cross-interaction part. Thus, let us rewrite $\mathfrak{F}$ as
$$
	\mathfrak{F}(X,Y) = S(X)+S(Y)+K(X,Y),
$$
where $S$ is the energy functional arising from the self-interactions and $K$ is associated to the cross-interaction.
\end{remark}
Following the procedure of \cite{BCDFP}, since we are dealing with distribution of particles, we have to ensure that the flow remains in the set $\mathcal{C}\times\mathcal{C}$, with $\mathcal{C}$ defined in \eqref{eq:cone}, see also \cite{Boll-br-loe,Brenier}. Hence, for $X\in L^2(0,1)$ given, we introduce the \textit{indicator function} of $\mathcal{C}$, defined by,
\begin{equation}
\mathcal{I}_{\mathcal{C}}(X)=
\begin{cases}
0, &\text{if}\ X\in\mathcal{C},\\
+\infty, &\text{otherwise}.
\end{cases}
\end{equation}
Thus we consider the extended functional
\begin{equation}
\bar{\mathfrak{F}}(X,Y)=\mathfrak{F}(X,Y)+\mathcal{I}_C(X)+\mathcal{I}_C(Y).
\end{equation}
In \cite[Proposition 2.8]{BCDFP} the authors proved that the self-interaction part of $\mathfrak{F}$, i.e. $S$, is actually linear when restricted to $\mathcal{C}$. Let us recall this result in the next proposition.
\begin{proposition}
\label{prop:selflinear}
Let $X\in\mathcal{C}$. Then
$$
S(X)=\int_0^1(1-2z)X(z)\, \d z.
$$
\end{proposition}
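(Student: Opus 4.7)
The plan is to exploit the fact that for $X\in\mathcal{C}$ the absolute value in the definition of $S$ disappears, reducing the double integral to an expression that can be computed by a single application of Fubini's theorem.

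First I would split the square $(0,1)^2$ along the diagonal $\{z=\xi\}$. Since $X$ is non-decreasing, $|X(z)-X(\xi)|=X(z)-X(\xi)$ on $\{\xi\le z\}$ and the symmetric expression holds on the complementary triangle. By symmetry of the integrand in $(z,\xi)$, one can write
\begin{equation*}
\int_0^1\!\!\int_0^1 |X(z)-X(\xi)|\,\d z\,\d \xi = 2\int_0^1\!\!\int_0^z \bigl(X(z)-X(\xi)\bigr)\,\d\xi\,\d z
= 2\int_0^1 z\,X(z)\,\d z - 2\int_0^1\!\!\int_0^z X(\xi)\,\d\xi\,\d z.
\end{equation*}

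Next I would apply Fubini to the remaining double integral, swapping the order to obtain $\int_0^1 (1-\xi)X(\xi)\,\d\xi$. Combining the two pieces yields
\begin{equation*}
\int_0^1\!\!\int_0^1 |X(z)-X(\xi)|\,\d z\,\d\xi = 2\int_0^1 (2z-1)X(z)\,\d z,
\end{equation*}
so multiplying by $-1/2$ gives the claim $S(X)=\int_0^1(1-2z)X(z)\,\d z$.

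The argument is entirely elementary; the only point that requires a little care is justifying the use of Fubini, which is immediate since $X\in L^2(0,1)\subset L^1(0,1)$ makes the integrand absolutely integrable on $(0,1)^2$. There is no real obstacle, but it is worth noting that the monotonicity hypothesis $X\in\mathcal{C}$ is essential: dropping it would prevent the removal of the absolute value, and the resulting linear formula would fail for general $L^2$-functions.
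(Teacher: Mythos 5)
Your computation is correct and complete: splitting the square along the diagonal, using monotonicity to drop the absolute value, and applying Fubini to get $\int_0^1(1-\xi)X(\xi)\,\d\xi$ is exactly the standard argument, and the Fubini justification via $X\in L^2(0,1)\subset L^1(0,1)$ is adequate. The paper itself does not reprove this statement but simply cites \cite[Proposition 2.8]{BCDFP}, where the same elementary calculation is carried out, so your proposal supplies precisely the argument the paper delegates to that reference.
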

As a trivial consequence of  Proposition \ref{prop:selflinear} we have the following result.
\begin{proposition}\label{prop:convexityforpseudo}
The functional $\tilde{\mathfrak{F}}$ is convex on $L^2(0,1)^2$.
\end{proposition}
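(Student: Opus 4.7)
The plan is to exploit the decomposition $\bar{\mathfrak{F}}=S(X)+S(Y)+K(X,Y)+\mathcal{I}_{\mathcal{C}}(X)+\mathcal{I}_{\mathcal{C}}(Y)$ and to verify convexity summand by summand on $L^2(0,1)^2$. Since $\bar{\mathfrak{F}}\equiv+\infty$ outside of $\mathcal{C}\times\mathcal{C}$, the only nontrivial case of the convexity inequality is when both endpoints lie in $\mathcal{C}\times\mathcal{C}$, and then the convex combination lies in $\mathcal{C}\times\mathcal{C}$ too (see the next step), so we may restrict attention to the effective domain.

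First I would record that the cone $\mathcal{C}$ is convex: a convex combination $\lambda f+(1-\lambda)g$ of two non-decreasing $L^2$ functions is again non-decreasing. Hence $\mathcal{I}_{\mathcal{C}}$ is the indicator of a convex set and is itself convex on $L^2(0,1)$, so the two penalty terms contribute convexly.

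Next I would handle the self-interaction parts. A priori the integrals defining $S(X)$ and $S(Y)$ are concave (they come with a minus sign in front of a convex kernel $|X(z)-X(\xi)|$), which is precisely the obstruction that makes convexity non-obvious. This is where Proposition \ref{prop:selflinear} is crucial: restricted to $\mathcal{C}$, one has $S(X)=\int_0^1(1-2z)X(z)\,\d z$, i.e. $S$ is \emph{linear} (hence convex). Since we only need convexity of $\bar{\mathfrak{F}}$ and the indicator functions force $X,Y\in\mathcal{C}$, this linearity applies on the effective domain.

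Finally, for the cross-interaction $K(X,Y)=\int_0^1\int_0^1|X(z)-Y(\xi)|\,\d z\,\d \xi$, for each fixed $(z,\xi)$ the map $(X,Y)\mapsto|X(z)-Y(\xi)|$ is the absolute value of a continuous linear functional on $L^2(0,1)^2$ and is therefore convex; integrating with respect to $\d z\,\d \xi$ preserves convexity. Adding the five convex (or linear) contributions yields the convexity of $\bar{\mathfrak{F}}$ on $L^2(0,1)^2$. I do not expect any genuine obstacle here: the single subtle point is that the self-interaction energy, which is not convex on $L^2(0,1)$ in general, becomes linear once one restricts to monotone rearrangements, and this is exactly what Proposition \ref{prop:selflinear} provides.
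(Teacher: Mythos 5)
Your proof is correct and takes essentially the same route as the paper: the paper also splits $\bar{\mathfrak{F}}$ into the cross-interaction part $K$ (convex because $N$ is convex) and the remaining part, whose convexity rests on the linearity of $S$ on the convex cone $\mathcal{C}$ provided by Proposition \ref{prop:selflinear}; you simply spell out the reduction to the effective domain that the paper delegates to a citation. One cosmetic imprecision: the evaluation $X\mapsto X(z)$ is not a continuous linear functional on $L^2(0,1)$, but your argument only needs pointwise convexity of the integrand of $K$ followed by integration, which is exactly what you use.
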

\begin{proof}
The proof is trivial since $K$, the cross-interaction part  of $\bar{\mathfrak{F}}$, is convex due to the convexity of the Newtonian potential $N$. Moreover, from \cite[Proposition 2.9]{BCDFP} we argue that the remaining part is convex.
\end{proof}
We now present the definition of $L^2$-gradient flow solutions to  system \eqref{eq:system-pseudo-inverse}.
\begin{definition}\label{def:l2gradflow}
Let $(X_0,Y_0)\in \mathcal{C}\times \mathcal{C}$. An absolutely continuous curve $(X(t,\cdot),Y(t,\cdot))\in L^2(0,1)^2$, $t\ge 0$, is a gradient flow for the functional $\bar{\mathfrak{F}}$ if $Z(t):=(X(t),Y(t))$ is a Lipschitz function on $[0,+\infty)$, i.e., \replace{$\frac{dZ}{dt}\in L^\infty(0,+\infty;\mathcal{C}\times\mathcal{C})$}{$\frac{dZ}{dt}\in L^\infty(0,+\infty;L^2(0,1)^2)$} (in the sense of distributions) and if
it satisfies the sub-differential inclusion
\item
	\begin{equation}\label{eq:l2gradflow}
	\ddt \begin{pmatrix}
X(t,\cdot)\\
Y(t,\cdot)
	\end{pmatrix}\in-\partial\bar{\mathfrak{F}}\big[\big(X(t,\cdot),Y(t,\cdot)\big)\big]
	\end{equation}
for every $t>0$ with $(X(0,\cdot),Y(0,\cdot))=(X_0(\cdot),Y_0(\cdot))$.
\end{definition}
We observe that the assumption $(X_0,Y_0)\in \mathcal{C}\times \mathcal{C}$ is natural as $X_0$ (respectively $Y_0$) is the pseudo-inverse of the cumulative distribution of the initial measure $\rho_0$ (respectively $\eta_0$). We also observe that this assumption easily implies $\partial\bar{\mathfrak{F}}\big[\big(X_0,Y_0\big)\big]\neq\emptyset$.
\begin{remark}
The gradient flow  notion defined in Definition \ref{def:l2gradflow} is taken from the book \cite[Theorem 3.1]{Brezis}. Actually, in \cite[Theorem 3.1]{Brezis} the following extra condition is required, namely
{\color{black}
\begin{equation}
    \label{eq:l2gradflow_timederivative}
      \left\|\frac{\d Z}{\d t}\right\|_{L^\infty((0,+\infty);L^2(0,1)^2)}\leq\left\|\partial^0\bar{\mathfrak{F}}\big[\big(X_0,Y_0\big)\big]\right\|_{L^2(0,1)^2}.
\end{equation}
}
According to \cite[Theorem 3.1]{Brezis} a solution in the sense of Definition \ref{def:l2gradflow} in conjunction with  \eqref{eq:l2gradflow_timederivative}  directly verifies the following properties:
\begin{enumerate}
\item $Z$ admits a right derivative for every $t\in[0,+\infty)$ and
$$
\frac{d^+Z}{dt}(t)=-\partial^0\bar{\mathfrak{F}}\left[Z(t)\right],
$$
for every $t\in[0,+\infty)$;
\item the function $t\mapsto\partial^0\bar{\mathfrak{F}}\left[Z(t)\right]$ is right continuous and the function $t\mapsto\left\|\partial^0\bar{\mathfrak{F}}\left[Z(t)\right]\right\|_{L^2(0,1)^2}$ is non-increasing;
\item if $Z_{1,t}:=(X_1(t,\cdot),Y_1(t,\cdot))$ and $Z_{2,t}:=(X_2(t,\cdot),Y_2(t,\cdot))$ are two solutions to system \eqref{eq:l2gradflow}, then there holds
\begin{equation*}
\|Z_{1,t}-Z_{2,t}\|_{L^2\times L^2}\le\|Z_{1,0}-Z_{2,0}\|_{L^2\times L^2},
\end{equation*}
for all $t\ge0$.
\end{enumerate}
On the other hand, \cite[Section 9.6, Theorem 3]{evans} shows that condition \eqref{eq:l2gradflow_timederivative} can be avoided in order to prove uniqueness. In fact, the estimate \eqref{eq:l2gradflow_timederivative} can be proven as a consequence of the properties stated in Definition \ref{def:l2gradflow}.
\end{remark}
Since we know that $\bar{\mathfrak{F}}$ is a proper, lower semi-continuous, and convex functional on the Hilbert space $L^2(0,1)^2$, it easy to show that $\partial\bar{\mathfrak{F}}$ is a maximal monotone operator. Thus we can apply the theory of Br\'{e}zis \cite[Theorem 3.1]{Brezis} combined with \cite[Section 9.6, Theorem 3]{evans} in order to prove existence and uniqueness of an absolutely continuous curve satisfying the differential inclusion above.
\begin{theorem}\label{thm:l2gradientflow}
Let $(X_0,Y_0)\in\mathcal{C}\times\mathcal{C}$. There exists a unique solution $(X(t,\cdot),Y(t,\cdot))$ in the sense of Definition \ref{def:l2gradflow} with initial datum $(X_0,Y_0)$.
\end{theorem}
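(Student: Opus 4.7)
The plan is to recognise this as an application of Br\'ezis's classical theorem on gradient flows generated by maximal monotone operators on Hilbert spaces, and thus to invoke \cite[Theorem 3.1]{Brezis} together with \cite[Section 9.6, Theorem 3]{evans}. The task reduces to verifying three properties of $\bar{\mathfrak{F}}$ on $L^2(0,1)^2$: it is proper, lower semi-continuous, and convex. The third has already been established in Proposition \ref{prop:convexityforpseudo}.

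For properness, I would observe that for any $(X,Y)\in\mathcal{C}\times\mathcal{C}$ the two self-interaction contributions collapse, by Proposition \ref{prop:selflinear}, to $\int_0^1(1-2z)X(z)\d z$ and $\int_0^1(1-2z)Y(z)\d z$, which are finite by Cauchy--Schwarz, while the cross term $K(X,Y)$ is controlled by $\|X\|_{L^1}+\|Y\|_{L^1}$ using the triangle inequality $|X(z)-Y(\xi)|\le|X(z)|+|Y(\xi)|$. For lower semi-continuity I would first check that $\mathcal{C}$ is a closed convex subset of $L^2(0,1)$ --- any $L^2$-convergent sequence admits an a.e.\ convergent subsequence and monotonicity is preserved under pointwise limits --- so its indicator $\mathcal{I}_\mathcal{C}$ is convex and l.s.c. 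The linear self-interaction on $\mathcal{C}$ is the restriction of a continuous linear functional on $L^2(0,1)$, and the cross term $K$ is in fact globally $L^2$-Lipschitz thanks to the reverse triangle inequality combined with the $1$-Lipschitz character of $N(x)=|x|$, since
$$
|K(X,Y)-K(X',Y')|\le \|X-X'\|_{L^1(0,1)}+\|Y-Y'\|_{L^1(0,1)}\le \|X-X'\|_{L^2}+\|Y-Y'\|_{L^2}.
$$

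With these three properties in hand, $\partial\bar{\mathfrak{F}}$ is a maximal monotone operator on the Hilbert space $L^2(0,1)^2$ via Minty's surjectivity characterisation. Moreover, as noted immediately after Definition \ref{def:l2gradflow}, the sub-differential is non-empty on the whole cone $\mathcal{C}\times\mathcal{C}$, so $\mathcal{C}\times\mathcal{C}\subset D(\partial\bar{\mathfrak{F}})$ and Br\'ezis's theorem then produces a unique Lipschitz-in-time curve $t\mapsto(X(t,\cdot),Y(t,\cdot))$ satisfying \eqref{eq:l2gradflow} with the prescribed initial datum. Uniqueness can alternatively be read off directly from monotonicity: for any two solutions $Z_1,Z_2$ of \eqref{eq:l2gradflow}, the monotonicity of $\partial\bar{\mathfrak{F}}$ implies $\langle\dot Z_1-\dot Z_2,\,Z_1-Z_2\rangle_{L^2(0,1)^2}\le 0$, and integrating $\tfrac12\ddt\|Z_1(t)-Z_2(t)\|_{L^2(0,1)^2}^2\le 0$ yields the $L^2$-contraction estimate.

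The most delicate step I expect is really the explicit identification of (an element of) $\partial\bar{\mathfrak{F}}(X_0,Y_0)$, which is needed to confirm $\mathcal{C}\times\mathcal{C}\subset D(\partial\bar{\mathfrak{F}})$; this requires reading off the right-hand side of \eqref{eq:system-pseudo-inverse} with the $\sign(0)=0$ convention and checking that the resulting expression genuinely belongs to $L^2(0,1)$ and produces a valid sub-differential inequality against arbitrary perturbations inside $\mathcal{C}$ (accounting for the normal cone contribution of $\mathcal{I}_\mathcal{C}$). Once this point is handled, the Br\'ezis machinery delivers both existence and uniqueness with essentially no further work.
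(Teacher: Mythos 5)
Your proposal is correct and follows essentially the same route as the paper: verify that $\bar{\mathfrak{F}}$ is proper, lower semi-continuous, and convex on $L^2(0,1)^2$ (the convexity being Proposition \ref{prop:convexityforpseudo}), conclude that $\partial\bar{\mathfrak{F}}$ is maximal monotone, and invoke \cite[Theorem 3.1]{Brezis} together with \cite[Section 9.6, Theorem 3]{evans}, noting that $(X_0,Y_0)\in\mathcal{C}\times\mathcal{C}$ guarantees $\partial\bar{\mathfrak{F}}[(X_0,Y_0)]\neq\emptyset$. You in fact supply more detail than the paper does on properness, closedness of the cone, and the Lipschitz continuity of the cross term, all of which is sound.
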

Now, let us go back to system \eqref{eq:full_system} and state our definition of solution.
\begin{definition}\label{def:solutionmeasures}
Let $\gamma_0=(\rho_0,\eta_0)\in\mptr\times\mptr$. An absolutely continuous curve $\gamma(t)=(\rho(t),\eta(t)):[0,T]\to\mptr\times\mptr$ is a \emph{gradient flow} solution to  system \eqref{eq:full_system} if the  pseudo-inverses $(X(t,\cdot),Y(t,\cdot))\in\mathcal{C}\times\mathcal{C}$ of the space cumulative distribution functions associated to  $(\rho(t,\cdot),\eta(t,\cdot))$ are a solution to system \eqref{eq:system-pseudo-inverse} in the sense of Definition \ref{def:l2gradflow} with initial datum $(X_0,Y_0)=(X_{\rho_0},Y_{\eta_0})$.
\end{definition}

According to Definition \ref{def:solutionmeasures} the following theorem is then a consequence of the isometry \eqref{eq:isometry} and Theorem \ref{thm:l2gradientflow}.
\begin{theorem}\label{thm:solutionmeasures}
Let $\gamma_0=(\rho_0,\eta_0)\in\mptr\times\mptr$. There exists  a unique solution to the system \eqref{eq:full_system} in the sense of Definition \ref{def:solutionmeasures}.
\end{theorem}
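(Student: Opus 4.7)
The plan is to transport the problem to $L^2(0,1)^2$ via the pseudo-inverse isometry, invoke the already-established well-posedness there, and push the result back to the measure setting.

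First, starting from $(\rho_0,\eta_0)\in\mptr\times\mptr$, I would define $(X_0,Y_0):=(X_{\rho_0},X_{\eta_0})$ via \eqref{eq:pseudoinverse_def}. Since the pseudo-inverse of a probability measure is non-decreasing and right-continuous, and since $\rho_0,\eta_0$ have finite second moment, the change of variables \eqref{eq:CoV_PseudoInverse} gives $X_0,Y_0\in L^2(0,1)$; monotonicity then yields $(X_0,Y_0)\in\mathcal{C}\times\mathcal{C}$. Theorem \ref{thm:l2gradientflow} now produces a unique gradient flow curve $(X(t,\cdot),Y(t,\cdot))$ in $\mathcal{C}\times\mathcal{C}$ in the sense of Definition \ref{def:l2gradflow} with this initial datum.

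Next I would use the inverse of the isometry $\Psi$ of \eqref{eq:isometry} to define $(\rho(t),\eta(t)):=(\Psi^{-1}X(t),\Psi^{-1}Y(t))\in\mptr\times\mptr$ for every $t\geq 0$. By construction, the space cumulative distribution functions of $(\rho(t),\eta(t))$ have pseudo-inverses $(X(t),Y(t))$, so Definition \ref{def:solutionmeasures} is fulfilled as soon as the curve $t\mapsto(\rho(t),\eta(t))$ is absolutely continuous with respect to $\mathcal{W}_2$. This absolute continuity is immediate from the identity \eqref{eq:W2PseudoInverses}: for any $0\leq s\leq t$,
\begin{equation*}
\mathcal{W}_2^2\big((\rho(t),\eta(t)),(\rho(s),\eta(s))\big)=\|X(t)-X(s)\|_{L^2}^2+\|Y(t)-Y(s)\|_{L^2}^2,
\end{equation*}
so the Lipschitz regularity of $t\mapsto(X(t),Y(t))$ granted by Definition \ref{def:l2gradflow} transfers directly to Lipschitz (hence absolute) continuity of $(\rho(t),\eta(t))$ in $\mathcal{W}_2$.

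For uniqueness, suppose $(\rho_1(t),\eta_1(t))$ and $(\rho_2(t),\eta_2(t))$ are two solutions in the sense of Definition \ref{def:solutionmeasures} sharing the initial datum $(\rho_0,\eta_0)$. Their pseudo-inverses $(X_i(t),Y_i(t))$ both satisfy \eqref{eq:l2gradflow} with the same initial datum $(X_0,Y_0)$, so Theorem \ref{thm:l2gradientflow} forces $(X_1,Y_1)\equiv(X_2,Y_2)$. Applying $\Psi^{-1}$ (which is well-defined on $\mathcal{C}$ by the isometry proposition) yields $(\rho_1,\eta_1)\equiv(\rho_2,\eta_2)$. No step here is an obstacle: the real content of the theorem is packaged inside Theorem \ref{thm:l2gradientflow} (via Br\'ezis' theory and convexity of $\bar{\mathfrak{F}}$), and the present statement is essentially a transfer lemma across the isometry $\Psi$.
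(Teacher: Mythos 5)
Your proposal is correct and follows exactly the paper's argument: the paper derives this theorem directly from the isometry \eqref{eq:isometry} together with Theorem \ref{thm:l2gradientflow}, which is precisely the transfer you carry out. You simply spell out the details (membership of the initial data in $\mathcal{C}\times\mathcal{C}$, transfer of Lipschitz continuity via \eqref{eq:W2PseudoInverses}, and uniqueness via the bijectivity of $\Psi$) that the paper leaves implicit.
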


So far we assumed that the link between \eqref{eq:full_system} and \eqref{eq:system-pseudo-inverse} is somewhat natural and we just referred to similar situations in the literature. However, the theory developed in this subsection would be somewhat meaningless if we did not show that the concept of solution in Definition \ref{def:solutionmeasures} extends a more classical notion of solution for \eqref{eq:full_system}. The following subsection is dedicated to establishing exactly this link.

\subsection{Absolutely Continuous Initial Data}\label{subsec:densities_case}
In this subsection we  consider the case of densities as initial data. Following the approach of \cite{AGS} combined with the results from \cite{B,BCDFP,CDFFLS,CFP12,DFF}, we pose  system \eqref{eq:full_system} as the gradient flow of the interaction energy functional \eqref{eq:enfunctional} (that we recall here for the reader's convenience)
\begin{equation*}
\mF(\rho,\eta)=-\frac{1}{2}\int_\R N\star\rho\, \d \rho - \frac{1}{2}\int_\R N\star\eta\, \d \eta + \int_\R N\star\eta\, \d \rho,
\end{equation*}
for all $(\rho,\eta)\in\mptra\times\mptra$, and $N(x)=|x|$, for all $x\in\R$.
\begin{definition}\label{def:gradflow}
Given any $\gamma_0=(\rho_0,\eta_0)\in\mptra\times\mptra$, an absolutely continuous curve $\gamma(t)=(\rho(t),\eta(t)):[0,T]\to\mptra\times\mptra$ is a gradient flow for $\mF$ if $\rho(t)$ and $\eta(t)$ solve the following system in the distributional sense
\begin{equation}\label{eq:gradflowsys}
\begin{cases}
\partial_t\rho(t)+\partial_x(\rho(t)v_{1}(t))=0, \\
\partial_t\eta(t)+\partial_x(\eta(t)v_{2}(t))=0,
\end{cases}
\end{equation}
with initial datum $\gamma_0$ and the velocity field $v(t)=(v_{1}(t),v_{2}(t))$ such that
$$
	v_{i}(t)=- \,(\partial^0\mF[\gamma(t)])_i
$$
for $i=1,2$ and
\begin{equation*}
\|v(t)\|_{L^2(\gamma(t))}=|\gamma'|(t),
\end{equation*}
for a.e. $t>0$.
\end{definition}
Note that it is easy to check that the element of minimal norm in $\partial N(x)$ is given by
$$
\partial^0N(x)=
\begin{cases}
\sign(x), & x\neq 0,\\
0,       & x=0.
\end{cases}
$$
Using the results obtained in \cite{BCDFP,CDFFLS,CFP12,DFF}, we easily get the following proposition.
\begin{proposition}\label{prop:geodconvandsubdiff}
The functional $\mF$ is $\lambda$-geodesically convex on $\mptra\times \mptra$ for all $\lambda\le0$. Moreover, for all $\rho,\eta\in \mptra$ the vector field
\begin{equation}\label{eq:subdiff_sign}
\partial^0\mF[\rho,\eta]=\begin{pmatrix}
-\partial^0 N\star\rho+\partial^0N\star\eta\\
-\partial^0 N\star\eta+\partial^0N\star\rho
\end{pmatrix},
\end{equation}
is the unique element of the minimal Fr\'{e}chet sub-differential of $\mF$, where
\begin{align*}
\partial^0 N\star\rho(x)=\int_{\{x\neq y\}}\sign(x-y)\, \d \rho(y), \ and\ \ \partial^0 N\star\eta(x)=\int_{\{x\neq y\}}\sign(x-y)\, \d \eta(y).
\end{align*}
\end{proposition}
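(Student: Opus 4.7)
My strategy is to lift both statements from the Wasserstein setting to the $L^2$ pseudo-inverse setting, where Proposition \ref{prop:convexityforpseudo} has already established convexity. For the $\lambda$-geodesic convexity, I would fix $(\rho_0,\eta_0),(\rho_1,\eta_1)\in\mptra\times\mptra$ with pseudo-inverses $(X_0,Y_0),(X_1,Y_1)\in\mathcal{C}\times\mathcal{C}$. In one dimension the unique $\mW_2$-geodesic between these pairs is represented at the pseudo-inverse level by the affine curve $(X_t,Y_t)=((1-t)X_0+tX_1,(1-t)Y_0+tY_1)$, which still lies in $\mathcal{C}\times\mathcal{C}$. Using \eqref{eq:functional_pseudoinverse} and the splitting of Remark \ref{rem:Fsumof3}, I would write
\begin{equation*}
  \mF(\rho_t,\eta_t)=\mathfrak{F}(X_t,Y_t)=S(X_t)+S(Y_t)+K(X_t,Y_t).
\end{equation*}
By Proposition \ref{prop:selflinear} the self-interaction pieces $t\mapsto S(X_t)$ and $t\mapsto S(Y_t)$ are affine, while the cross term $K(X,Y)=\int_0^1\!\!\int_0^1|X(z)-Y(\xi)|\,dz\,d\xi$ is jointly convex in $(X,Y)$ as a superposition of the convex function $|\cdot|$. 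Summing yields convexity of $t\mapsto\mF(\rho_t,\eta_t)$ on $[0,1]$, which is precisely $0$-geodesic convexity of $\mF$ and, a fortiori, $\lambda$-geodesic convexity for every $\lambda\le 0$.

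For the characterisation of the minimal sub-differential, I would verify that the field $v=(v_1,v_2)$ in \eqref{eq:subdiff_sign} satisfies Definition \ref{def:subdiff} (applied componentwise on the product space) at every $(\rho,\eta)\in\mptra\times\mptra$, and then invoke the $0$-geodesic convexity just obtained to upgrade this to membership in $\partial\mF[\rho,\eta]$ together with minimality. The starting point is the pointwise convexity inequality $|a|-|b|\ge\sign(b)(a-b)$, valid for $b\ne 0$, applied with $b=x-\xi$ and $a=y-\xi$ along an optimal plan between $\rho$ and $\tilde\rho$ (and symmetrically for $\eta,\tilde\eta$); integrating against the respective optimal plans and exchanging integrations via Fubini produces the sub-differential inequality for each of the three pieces of $\mF$, with first variations exactly $\pm\partial^0N\star\rho$ and $\pm\partial^0N\star\eta$. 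Once $v\in\partial\mF[\rho,\eta]$ is established, the general theory for $\lambda$-geodesically convex functionals from \cite{AGS} identifies it with $\partial^0\mF[\rho,\eta]$: $v$ already arises as a pure gradient direction in the Wasserstein tangent space (it is a functional derivative on an absolutely continuous measure), and uniqueness of the minimal element follows from the strict convexity of the $L^2(\rho)\times L^2(\eta)$ norm.

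The principal technical obstacle I anticipate is handling the singularity of $N'$ at the origin, which is where the convention $\sign(0)=0$ becomes essential. Since $\rho,\eta\in\mptra$, the diagonals $\{x=y\}$ are negligible for all product measures that enter the computation, so the convolutions $\partial^0N\star\rho(x)=\int_{\{x\ne y\}}\sign(x-y)\,d\rho(y)$ and its $\eta$-counterpart are well-defined bounded functions. This is what reconciles the explicit formula \eqref{eq:subdiff_sign} with the classical first-variation calculation and legitimises all the Fubini exchanges despite $N$ failing to be differentiable at zero; the absolute continuity assumption is essential here, and it is precisely its failure in the general measure case that forces the switch to the $L^2$-gradient formulation of Subsection \ref{subsec:measure}.
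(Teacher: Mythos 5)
Your treatment of the geodesic convexity is correct and is, in substance, the paper's own argument transported to pseudo-inverse coordinates: the paper works directly with the essentially non-decreasing optimal map $T$ of \cite[Lemma 1.4]{CFP12} and observes that $|(1-t)(x-y)+t(T(x)-T(y))|$ splits linearly because $x-y$ and $T(x)-T(y)$ share a sign, which is exactly the statement that the self-interaction part is affine along the linear interpolation of pseudo-inverses (Proposition \ref{prop:selflinear}). Either formulation is fine, and your reduction to Proposition \ref{prop:convexityforpseudo} via the isometry is legitimate.

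The sub-differential part, however, has a genuine gap. You propose to obtain the Fr\'echet sub-differential inequality ``for each of the three pieces of $\mF$'' from the pointwise convexity inequality $|a|-|b|\ge\sign(b)(a-b)$. This works for the attractive cross term $\mathcal{K}[\rho,\eta]=\int_\R N\star\eta\,\d\rho$, which enters $\mF$ with a plus sign, but for the two repulsive self-interaction terms $\mathcal{S}[\rho]=-\frac12\int_\R N\star\rho\,\d\rho$ it produces the \emph{reverse} inequality: integrating $|y_1-y_2|\ge\sign(x_1-x_2)\,(y_1-y_2)$ against $\gamma\otimes\gamma$ for $\gamma\in\Gamma_0(\rho,\nu)$ and multiplying by $-\tfrac12$ yields $\mathcal{S}[\nu]-\mathcal{S}[\rho]\le\int (-\partial^0N\star\rho)(x)\,(y-x)\,\d\gamma(x,y)+o(W_2)$, i.e.\ a \emph{super}-differential estimate, not the lower bound required by Definition \ref{def:subdiff}. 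What rescues the statement is not the convexity of $|\cdot|$ but the one-dimensional rigidity you only mention in passing: for absolutely continuous measures the optimal plan is induced by an essentially non-decreasing map, the diagonal is $\rho\otimes\rho$-negligible, and $\sign(y_1-y_2)=\sign(x_1-x_2)$ off the diagonal, so the pointwise inequality is in fact an \emph{equality} and $\mathcal{S}$ admits the first-order expansion exactly (consistently with its affineness along geodesics). This is precisely the content of \cite[Theorem 5.1]{BCDFP} (together with \cite[Proposition 4.3.3]{B}), which the paper invokes in place of a generic convexity computation; your argument needs this substitution for the two repulsive pieces to go through. Finally, identifying the resulting field as the element of \emph{minimal} norm requires more than strict convexity of the $L^2$-norm (which only gives uniqueness of the minimiser within the convex set $\partial\mF$): one must show the candidate actually attains the minimum, which the paper does by arguing as in \cite[Proposition 2.2]{CDFFLS}; your remark that it is a gradient-type tangent direction is the right idea but should be made precise along those lines.
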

\begin{proof}
  The geodesic convexity of $\mF$ on $\mptra\times \mptra$ is the consequence of two observations. First, the cross-interaction part is geodesically convex as the interspecific interaction potential is given by $N(x)=|x|$, a convex function. Second, the geodesic convexity of the intraspecific self-interaction part can be proven using a nice monotonicity property of the transport map between two measures in $\mptra$ in one dimension. Indeed, in \cite[Lemma 1.4]{CFP12} the authors prove that, given $\mu,\nu\in\mptra$, the transport map $T=T_\mu^\nu$ is essentially non-decreasing, i.e. it is non-decreasing except on a $\mu$-null set. Hence, we can prove $\mathcal{S}[\mu]:=-\frac12 \int_\R N\star\mu\, \d \mu$ is geodesically convex. More precisely, if $T$ is the optimal transport map between $\mu$ and $\nu$, then $g_t=((1-t)\mbox{id}+tT)_\#\mu$ is the geodesic connecting $\mu$ and $\nu$. In particular, by using the mentioned monotonicity property for $T$, we have
\begin{align*}
\mathcal{S}[g_t]&=-\frac{1}{2}\iint_{\mathbb{R}^2}|x-y|\,dg_t(y)\,dg_t(x)\\
&=-\frac{1}{2}\iint_{\mathbb{R}^2}|(1-t)(x-y)+t(T(x)-T(y))|\,d\mu(y)\,d\mu(x)\\
&=-\frac{1}{2}(1-t)\iint_{\mathbb{R}^2}|x-y|\,d\mu(y)\,d\mu(x)-\frac{1}{2}\iint_{\mathbb{R}^2}|T(x)-T(y)|\,d\mu(y)\,d\mu(x)\\
&=(1-t)S[\mu]+tS[\nu],
\end{align*}
whence we get geodesic convexity for $S$, and for $\mathcal{F}$ as well. Obviously, 0-geodesic convexity implies $\lambda$-geodesic convexity for any $\lambda\le0$. In order to prove formula \eqref{eq:subdiff_sign}, let us notice the functional can be written as
$$
\mathcal{F}[\rho,\eta]=\mathcal{S}[\rho]+\mathcal{S}[\eta]+\mathcal{K}[\rho,\eta],
$$
where $\mathcal{S}$ has been introduced above and $\mathcal{K}[\rho,\eta]:=\int_\R N\star\eta\, \d \rho$. Now, thanks to \cite[Theorem 5.1]{BCDFP} and \cite[Proposition 4.3.3]{B} we know $\partial^0S[\rho]=-\partial^0 N\star\rho$, while \cite[Proposition 3.1]{DFF} yields
\begin{equation*}
\partial^0\mathcal{K}[\rho,\eta]=\begin{pmatrix}
\partial^0N\star\eta\\
\partial^0N\star\rho
\end{pmatrix}.
\end{equation*}
In particular, it is easy to check the vector field
\begin{equation*}
\begin{pmatrix}
-\partial^0 N\star\rho+\partial^0N\star\eta\\
-\partial^0 N\star\eta+\partial^0N\star\rho
\end{pmatrix}
\end{equation*}
is an element of the Fr\'{e}chet sub-differential of $\mF$, and it is the unique one of minimal $L^2$-norm by arguing as in \cite[Proposition 2.2]{CDFFLS}.
\end{proof}
\begin{remark}
We highlight that in the presence of atomic parts for $\rho$ or $\eta$ the sub-differential may be empty, as shown in \cite{BCDFP}.
\end{remark}

Recall that, for $\mu,\nu\in\mptra\times\mptra $, the \emph{slope} of a functional $\mF$ on $\mptra\times\mptra$ is defined as
$$
|\partial\mF|[\mu]:=\limsup_{\nu\to\mu}\frac{(\mF(\mu)-\mF(\nu))^+}{\mW_2(\nu,\mu)},
$$
\note{we corrected the formula for the metric slope}and it can be written as
$$
|\partial\mF|[\mu]=\min\{\|\nu\|_{L^2(\mu)}\,|\,\nu\in\partial\mF(\mu)\},
$$
under certain conditions, cf. \cite[Chapter 10]{AGS}.
\begin{definition}\label{def:maximalslope}
An absolutely continuous curve $\gamma(t):[0,T]\to\mptra\times\mptra$ is a \emph{curve of maximal slope} for the functional $\mF$ if the map $t\mapsto\mF(\gamma(t))$ is an absolutely continuous function and the following inequality holds
\begin{equation*}
	\mF(\gamma(s))-\mF(\gamma(t)) \geq \frac{1}{2}\int_s^t \left[|\gamma'|^2(\tau)+|\partial\mF|[\gamma(\tau)]^2\right]\, \d \tau,
\end{equation*}
for all $0\le s\le t\le T$.
\end{definition}

In order to construct a solution to system \eqref{eq:full_system} in the sense of Definition \ref{def:gradflow} we follow the strategy proposed in \cite{AGS} and used in \cite{CDFFLS,DFF}. First, we prove the existence of a curve of maximal slope by means of the so-called ``\textit{Minimizing Movement Scheme}'', cf. \cite{AGS,DeGiorgi}, or \textit{Jordan-Kinderlehrer-Otto scheme}, cf. \cite{JKO}. Then, we prove  the limit curve of the scheme is absolutely continuous w.r.t. the Lebesgue measure provided the initial datum is in $L^m(\R)\times L^m(\R)$ for some $m>1$.

Let $\tau>0$ be a fixed time step and let $\gamma_0=(\rho_0,\eta_0)\in\mptra\times\mptra$ be a fixed initial datum such that $\mF(\gamma_0)<+\infty$. We define a sequence $\{\gamma_\tau^n\}_{n\in\mathbb{N}}=\{(\rhotn,\etatn)\}_{n\in\mathbb{N}}$ recursively. We set $\gamma_\tau^0=\gamma_0$ and, for a given $\gamma_\tau^n\in\mptr\times\mptr$ with $n\geq 0$, we choose $\gamma_\tau^{n+1}$ as
{\color{black}
\begin{equation}
\label{eq:jko}
\gamma_\tau^{n+1}\in\argmin_{\gamma\in\mptr^2}\left\{\frac{1}{2\tau}\mW_2^2(\gamma_\tau^n,\gamma)+\mF(\gamma)\right\}.
\end{equation}
}
Note that \eqref{eq:jko} is well-posed arguing as in \cite[Lemma 2.3 and Proposition 2.5]{CDFFLS} for each component.
Next we define the piecewise constant interpolation of the sequence $\{\gamma_\tau^n\}_{n\in\mathbb{N}}$. Let $T>0$ be fixed and let $N:=\left[\frac{T}{\tau}\right]$. We set
$$
\gamma_\tau(t) = \gamma_\tau^n,
$$
for $t\in((n-1)\tau,n\tau]$. We proceed by showing that the family $\{\gamma_\tau\}_{\tau>0}$ admits a limiting curve and conclude by identifying this limit as a distributional solution to system \eqref{eq:full_system}. The proof in Proposition \ref{prop:narrowcompactness} follows a, by now, classical argument of \cite[Chapter 3]{AGS}, with only minor issues related to some moment estimates in our case. We present it here for the reader's convenience.

\begin{proposition}[Narrow compactness]\label{prop:narrowcompactness}
There exists an absolutely continuous curve \replace{$\gamma: [0,T]\rightarrow\mptra^2$}{$\gamma: [0,T]\rightarrow\mptr^2$} such that the family of piecewise constant interpolations, $\{\gamma_\tau\}_{\tau>0}$ admits a subsequence $\{\gamma_k\}_{k\in \N}:=\{\gamma_{\tau_k}\}_{k\in \N}$ which converges narrowly to $\gamma$ uniformly in $t\in[0,T]$ as $k\rightarrow +\infty$.
\end{proposition}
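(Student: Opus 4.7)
The plan is to follow the classical Ascoli--Arzel\`a argument for minimising movements, see \cite[Chapter 3]{AGS}. First I would derive the discrete energy dissipation inequality from the minimality in \eqref{eq:jko}: choosing $\gamma_\tau^n$ as a competitor for $\gamma_\tau^{n+1}$ gives
\begin{equation*}
\frac{1}{2\tau}\mW_2^2(\gamma_\tau^n,\gamma_\tau^{n+1})+\mF(\gamma_\tau^{n+1})\le\mF(\gamma_\tau^n),
\end{equation*}
and iterating/telescoping yields $\sum_{k=0}^{n-1}\mW_2^2(\gamma_\tau^k,\gamma_\tau^{k+1})\le 2\tau\bigl(\mF(\gamma_0)-\mF(\gamma_\tau^n)\bigr)$.

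Next I would produce a uniform-in-$\tau$ bound on second moments. Since $N(x)=|x|$, the inequality $|x-y|\le|x|+|y|$ and Cauchy--Schwarz for probability measures give
\begin{equation*}
\mF(\rho,\eta)\ge -m_2(\rho)^{1/2}-m_2(\eta)^{1/2}\ge -\sqrt{2\bigl(m_2(\rho)+m_2(\eta)\bigr)}.
\end{equation*}
Setting $M_2(\gamma):=m_2(\rho)+m_2(\eta)$, combining Remark~\ref{rem:inequalitymom} applied componentwise with the discrete Cauchy--Schwarz bound $\mW_2^2(\gamma_\tau^0,\gamma_\tau^n)\le n\sum_{k=0}^{n-1}\mW_2^2(\gamma_\tau^k,\gamma_\tau^{k+1})$ produces, for $n\tau\le T$, a self-consistent quadratic inequality for $M_2(\gamma_\tau^n)$ which closes via a standard bootstrap on $\sqrt{M_2(\gamma_\tau^n)}$. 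This yields $\sup_{n\tau\le T}M_2(\gamma_\tau^n)\le C(T,M_2(\gamma_0),\mF(\gamma_0))$ and, plugging back, the total squared-displacement bound $\sum_k\mW_2^2(\gamma_\tau^k,\gamma_\tau^{k+1})\le C(T)\tau$.

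From the square-summability, a Cauchy--Schwarz computation over the intermediate indices gives the H\"older-type estimate $\mW_2(\gamma_\tau(s),\gamma_\tau(t))\le C(T)\sqrt{|t-s|+\tau}$ for all $s,t\in[0,T]$, while the uniform bound on $M_2$ ensures tightness of $\{\gamma_\tau(t)\}_\tau$ at every fixed $t$. A refined Ascoli--Arzel\`a theorem (\cite[Proposition 3.3.1]{AGS}), adapted to the product metric $\mW_2$, then extracts a subsequence $\gamma_{\tau_k}$ converging narrowly to a limit curve $\gamma$, uniformly in $t\in[0,T]$; the H\"older estimate is inherited by $\gamma$ via the lower semicontinuity of $\mW_2$ with respect to narrow convergence on tight families with uniformly bounded second moment, so $\gamma\in AC([0,T];\mptr^2)$.

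The main delicate point is the moment control: $\mF$ is not bounded below by a constant, and the self-interaction terms are not convex up to a quadratic perturbation, so the framework of \cite{DFF} does not apply directly. I would handle this via the bootstrap sketched above, exploiting that the negative part of $\mF(\gamma)$ only grows like $\sqrt{M_2(\gamma)}$; this sublinear growth is precisely what allows the quadratic inequality for $M_2(\gamma_\tau^n)$ to close uniformly on $[0,T]$ and makes the subsequent compactness step work without any additional structural assumption on the initial datum beyond $\mF(\gamma_0)<+\infty$.
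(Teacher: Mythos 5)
Your proposal is correct and follows essentially the same route as the paper's proof: discrete energy dissipation from the minimality in the scheme, telescoping, a lower bound on $\mF$ of order $\sqrt{m_2(\rho)+m_2(\eta)}$ to close the second-moment estimate (the paper absorbs the square root via a weighted Young inequality where you solve the resulting quadratic inequality, which is equivalent), the $\tfrac12$-H\"older equi-continuity up to an error of order $\sqrt{\tau}$, and the refined Ascoli--Arzel\`a theorem of \cite[Proposition 3.3.1]{AGS}. No gaps.
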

\begin{proof}
Consider two consecutive iterations, $\gamma_\tau^n$ and $\gamma_\tau^{n+1}$, of the JKO scheme, Eq. \eqref{eq:jko}. By the optimality of $\gamma_\tau^{n+1}$ we obtain
\begin{equation}\label{eq:fromscheme1}
\frac{1}{2\tau}\mW_2^2(\gamma_\tau^n,\gamma_\tau^{n+1})\le\mF(\gamma_\tau^n)-\mF(\gamma_\tau^{n+1}),
\end{equation}
which implies
\begin{equation}\label{eq:estimatefunct1}
\mF(\gamma_\tau^n)\le\mF(\gamma_0),
\end{equation}
for all $n\in\N$. Summing over $k$ from $m$ to $n$ with $m<n$ we obtain the following telescopic sum
\begin{equation}\label{eq:telescopic}
\frac{1}{2\tau}\sum_{k=m}^{n}\mW_2^2(\gamma_\tau^k,\gamma_\tau^{k+1})\le\mF(\gamma_\tau^m)-\mF(\gamma_\tau^{n+1}).
\end{equation}
Now, let us consider $t\in((n-1)\tau,n\tau]$; using the estimate \eqref{eq:telescopic} we obtain
\begin{equation}\label{eq:estimate1formoment}
\mW_2^2(\gamma_\tau(0),\gamma_\tau(t))\le2n\tau\mF(\gamma_0)-2n\tau\mF(\gamma_\tau^n).
\end{equation}
Using the Remark \ref{rem:inequalitymom} after H\"{o}lder and (weighted) Young inequalities, it is possible to obtain a bound from below for $\mF(\gamma_\tau^n)$, which gives in combination with \eqref{eq:estimate1formoment} the following estimate
\begin{equation}\label{eq:estimate2formoment}
\mW_2^2(\gamma_\tau(0),\gamma_\tau(t))\le4T\mF(\gamma_0)+m_2(\gamma_0)+16T^2=:C(\gamma_0,T).
\end{equation}
From  estimate \eqref{eq:estimate2formoment} and the inequality in Remark \ref{rem:inequalitymom} we can deduce  the second moment of $\gamma_\tau(t)$ is uniformly bounded on compact time intervals. Moreover, as a consequence of estimates \eqref{eq:telescopic} and \eqref{eq:estimate2formoment}, using once again the bound from below for $\mF(\gamma_\tau^n)$ we get
\begin{equation}\label{eq:lasttelescopicest}
\sum_{k=m}^{n-1}\mW_2^2(\gamma_\tau^k,\gamma_\tau^{k+1})\le\tau\bar{C}(\gamma_0,T).
\end{equation}
Now, let us consider $0\le s<t$ such that $s\in((m-1)\tau,m\tau]$ and $t\in((n-1)\tau,n\tau]$. It easy to check that $|n-m|<\frac{|t-s|}{\tau}+1$. By means of the Cauchy-Schwartz inequality and \eqref{eq:lasttelescopicest} we obtain $\frac{1}{2}$-H\"{o}lder equi-continuity for $\gamma_\tau$ (up to a negligible error of order $\sqrt{\tau}$) since
\begin{equation}\label{eq:holdercont}
\begin{split}
\mW_2(\gamma_\tau(s),\gamma_\tau(t))&\le\sum_{k=m}^{n-1}\mW_2(\gamma_\tau^k,\gamma_\tau^{k+1})\le\left(\sum_{k=m}^{n-1}\mW_2^2(\gamma_\tau^k,\gamma_\tau^{k+1})\right)^{\frac{1}{2}}|n-m|^{\frac{1}{2}}\\&\le c\left(\sqrt{|t-s|}+\sqrt{\tau}\right),
\end{split}
\end{equation}
where $c$ is a positive constant.
The refined version of the Ascoli-Arzel\`{a} theorem yields the narrow compactness, cf. \cite[Proposition 3.3.1]{AGS}. This completes the proof.
\end{proof}

\note{we changed the spaces from $\mptra$ to $\mptr$ in the JKO scheme above and adjusted the statement of the remark accordingly.}
\begin{remark}[Extension of solutions]
From Proposition \ref{prop:narrowcompactness} we can construct a curve $\gamma:[0,T]\rightarrow \mptr^2$ for any $T>0$. Thus we may extend the solution up to the point where the  second order moments of the solution become unbounded. By construction this is not possible and can only happen at $T=+\infty$. As a consequence Proposition \ref{prop:narrowcompactness} proves the existence of a limiting curve $\gamma:[0,\infty)\rightarrow \mptr^2$.
\end{remark}

As already mentioned, we can prove more refined estimates for the solution $\gamma$ to system \eqref{eq:full_system}. Indeed, we show that, starting with initial data $\rho_0,\eta_0\in\mptr\cap L^m(\R)$, for $m\in (1,+\infty]$, the solution keeps this regularity for every $t\ge0$ and it has second order moments uniformly bounded in time. These properties can be establish by using the ``flow interchange'' technique developed by Otto in \cite{otto}, and Matthes, McCann and Savar\'{e} in \cite{MMCS}. This technique is based on the idea that \textit{the dissipation
of one functional along the gradient flow of another functional equals the dissipation of the second
functional along the gradient flow of the first one}. In this spirit, the ``Evolution Variational Inequality'' (\textbf{E.V.I.}) linked with the auxiliary gradient flow is crucial in order to obtain useful refined estimates (see for instance \cite{DiFranEspFag,DFM}). The connection between gradient flows and evolutionary PDEs of diffusion type shown in \cite{AGS,JKO,otto,S} allows us to consider the (decoupled) system
\begin{subequations}
\begin{equation}\label{eq:heatporsyst}
\begin{cases}
\partial_{t}u_1=\partial_{xx}u_1^m +\varepsilon\partial_{xx}u_1,\\
\partial_{t}u_2=\partial_{xx}u_2^m +\varepsilon\partial_{xx}u_2,
\end{cases}
\end{equation}
as the gradient flow of the functional
\begin{equation}\label{eq:auxiliaryfunctionalE}
\begin{split}
\E(u_1,u_2)=&\frac{1}{m-1}\int_{\R}\left[u_1(x)^m + u_2(x)^m\right]\, \d x\\&+\varepsilon\int_{\R}[u_1(x)\log u_1(x)+u_2(x)\log u_2(x)]\, \d x,
\end{split}
\end{equation}
\end{subequations}
as well as the following system
\begin{subequations}
\begin{equation}\label{eq:linfokkplanksys}
\begin{cases}
\partial_{t}u_1=\partial_{x}(2xu_1) +\varepsilon\partial_{xx}u_1,\\
\partial_{t}u_2=\partial_{x}(2xu_2) +\varepsilon\partial_{xx}u_2,
\end{cases}
\end{equation}
which can be seen as the gradient flow of the functional
\begin{equation}\label{eq:auxiliaryfunctionalG}
\begin{split}
\G(u_1,u_2)=&\int_{\R}|x|^2(u_1(x)+u_2(x))\, \d x\\&+\varepsilon\int_{\R}[u_1(x)\log u_1(x)+u_2(x)\log u_2(x)]\, \d x,
\end{split}
\end{equation}
\end{subequations}
for $\varepsilon>0$ and $m\in(1,\infty)$, with respect to the product 2-Wasserstein distance $\mW_2$. We shall employ the flow interchange strategy twice taking as auxiliary functional
\begin{enumerate}
\item $\E$ to get $L^m$-regularity ($m>1$) for the solution $\gamma$,
\item $\G$ in order to obtain a uniform bound in time for the second order moments of $\gamma$.
\end{enumerate}
For the reader's convenience we shall sometimes use the symbol $\A$ to denote either $\G$ or $\E$.
The functional $\A\in\{\G,\E\}$ possess a $0$-flow given by the semigroup  and $\bm{S}_{\A}=(\sau,\sad)$, see for instance \cite{DS}. In particular, by setting 
{
\color{black}
    $$
    \saut(\nu_1):=u_1(t,\cdot),\quad \text{and}\quad \sadt(\nu_2):=u_2(t,\cdot),
    $$
}
we have $u(t,\cdot)=(u_1(t,\cdot),u_2(t,\cdot))$ is the unique classical solution at time $t$ of system \eqref{eq:heatporsyst} (respectively \eqref{eq:linfokkplanksys}) coupled with an initial value $(\nu_1,\nu_2)$ at $t=0$ in case $\A=\E$ ($\A=\G$, respectively). \note{the notation is slightly abusive, but not worthwhile changing everything.}
\begin{remark}\label{rem:controlbelowentropy}
\replace{From the seminal work of Jordan, Kinderlehrer, and Otto, we know that the entropy}{As in \cite[Proposition 4.1]{JKO}, we know that the log-entropy}
$$
	\mathcal{H}(\rho)=\int_{\Rd}\rho(x)\log\rho(x)\,\d x,
$$
is bounded from below in terms of the second moment $m_2(\rho)$, i.e.,
$$
\mathcal{H}(\rho)\ge -C(m_2(\rho) + 1)^{\beta},
$$
for every $\rho\in\mpta(\Rd)$, $\beta\in(\frac{d}{d+2},1)$, and $C<+\infty$, depending only on the space dimension $d$. We are going to use this inequality in order to have a uniform bound from below for the entropic part in \eqref{eq:auxiliaryfunctionalE} and \eqref{eq:auxiliaryfunctionalG}.
\end{remark}
For every $\gamma=(\rho,\eta)\in\mptra^2$, let us define the dissipation of $\mF$ along $\bm{S}_{\A}$ by
$$
\bm{D}_{\A}\mF(\gamma):=\limsup_{s\downarrow0}\frac{\mF(\gamma)-\mF(\bm{S}_{\A}^s\gamma)}{s},
$$
where $\A\in\{\G,\E\}$. We prove the following proposition.
\begin{proposition}\label{prop:lpestimatest}
Let $m\in (1,+\infty)$ and let $\gamma_0=(\rho_0,\eta_0)\in(\mptra\cap L^m(\R))^2$ be such that $\E(\gamma_0)<+\infty$. The piecewise constant interpolation $\gammat=(\rhot,\etat)$ satisfies
\begin{equation}
\label{eq:linflmbound}
\|\rhot\|_{L^\infty(0,+\infty;L^m(\R))} + \|\etat\|_{L^\infty(0,+\infty;L^m(\R))} \le \|\rho_0\|_{L^m(\R)}+\|\eta_0\|_{L^m(\R)}
\end{equation}
Moreover, the limit curve $\gamma$ belongs to $L^\infty(0,+\infty;L^m(\R))^2$. In fact, this property can be extended to the cases $m=+\infty$.
\end{proposition}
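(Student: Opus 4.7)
I would use the flow interchange technique of Matthes--McCann--Savar\'{e} applied to the JKO scheme \eqref{eq:jko}, with the $0$-flow $\bm{S}_\E$ of \eqref{eq:auxiliaryfunctionalE} as the auxiliary semigroup. Perturbing the minimiser $\gammatnn$ by $\bm{S}_\E^s\gammatnn$ and using optimality in \eqref{eq:jko} yields
\[
\mF(\gammatnn) - \mF(\bm{S}_\E^s\gammatnn) \;\le\; \frac{1}{2\tau}\bigl[\mW_2^2(\bm{S}_\E^s\gammatnn,\gammatn) - \mW_2^2(\gammatnn,\gammatn)\bigr].
\]
Dividing by $s$ and sending $s\downarrow 0$, the right-hand side is controlled by the EVI for $\bm{S}_\E$ evaluated at the reference measure $\gammatn$, producing the fundamental flow-interchange inequality
\[
\tau\,\bm{D}_\E\mF(\gammatnn) + \E(\gammatnn) \;\le\; \E(\gammatn).
\]

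The decisive step is proving $\bm{D}_\E\mF\ge 0$. Writing $(u_1,u_2)(s):=\bm{S}_\E^s\gammatnn$ and introducing $\phi := -N\star u_1 + N\star u_2$, the variational derivatives of $\mF$ are $\delta_{u_1}\mF = \phi$ and $\delta_{u_2}\mF = -\phi$, so
\[
\frac{d}{ds}\mF(u_1,u_2) \;=\; \int_\R \phi\,\partial_s(u_1-u_2)\,\d x.
\]
Substituting $\partial_s u_i = \partial_{xx}(u_i^m + \varepsilon u_i)$, integrating by parts twice, and invoking the one-dimensional identity $\partial_{xx}\phi = 2(u_2-u_1)$ (a consequence of $N''=2\delta_0$), one obtains
\[
\frac{d}{ds}\mF(u_1,u_2) \;=\; 2\!\int_\R (u_2-u_1)(u_1^m - u_2^m)\,\d x \;-\; 2\varepsilon\!\int_\R (u_1-u_2)^2\,\d x \;\le\; 0,
\]
since $r\mapsto r^m$ is non-decreasing. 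Hence $\bm{D}_\E\mF(\gammatnn)\ge 0$ and the flow-interchange inequality telescopes to $\E(\gammatn) \le \E(\gamma_0)$ for every $n\in\N$.

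The remainder of the argument is routine. The $\varepsilon$-log-entropic part of $\E(\gammatn)$ is bounded below by a power of $m_2(\gammatn)$ via Remark \ref{rem:controlbelowentropy}, and the second moment is uniformly controlled by \eqref{eq:estimate2formoment}. Inserting this lower bound and letting $\varepsilon\downarrow 0$ yields $\|\rhotn\|_{L^m}^m + \|\etatn\|_{L^m}^m \le \|\rho_0\|_{L^m}^m + \|\eta_0\|_{L^m}^m$, and then \eqref{eq:linflmbound} follows from $(a^m+b^m)^{1/m} \le a + b$; the bound is inherited by the limit curve $\gamma$ via lower semicontinuity of the $L^m$-norm along the narrow convergence of Proposition \ref{prop:narrowcompactness}. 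The case $m=+\infty$ is then recovered by applying the finite-$m$ estimate with right-hand side uniformly bounded once $\rho_0,\eta_0\in L^\infty$, and sending $m\to+\infty$. The main obstacle lies precisely in the dissipation computation: it is only the combination of the one-dimensional Newtonian identity $N''=2\delta_0$ with the attractive/repulsive symmetry $\delta_{u_1}\mF = -\delta_{u_2}\mF$ built into \eqref{eq:enfunctional} that collapses the \emph{a priori} nonlocal integrand into the manifestly non-positive pointwise expression above; the integration by parts is justified because the $\bm{S}_\E$-flow is smooth and strictly positive for $\varepsilon>0$, while $u_1^m - u_2^m$ decays at infinity and $\phi$ remains bounded since $\rho$ and $\eta$ share the same mass.
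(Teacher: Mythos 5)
Your proposal is correct and follows essentially the same route as the paper: the flow interchange inequality obtained from JKO optimality plus the E.V.I. for the $0$-flow of $\E$, the sign of the dissipation via the one-dimensional identity $N''=2\delta_0$ (the paper performs the same integration by parts directly, arriving at the same manifestly non-positive integrand up to an immaterial constant), the log-entropy lower bound of Remark \ref{rem:controlbelowentropy} to send $\varepsilon\downarrow 0$, lower semicontinuity for the limit curve, and the interpolation argument for $m=+\infty$.
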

\begin{proof}
As an easy consequence of the definition of the sequence $\{\gammatn\}_{n\in\mathbb{N}}$, for all $s>0$ we have that
$$
\frac{1}{2\tau}\mW_2^2(\gammatnn,\gammatn)+\mF(\gammatnn)\le\frac{1}{2\tau}\mW_2^2(\bm{S}_{\E}^s\gammatnn,\gammatn)+\mF(\bm{S}_{\E}^s\gammatnn).
$$
Dividing by $s>0$ and passing to the $\limsup$ as $s\downarrow0$ we get
\begin{equation}\label{eq:controldiss}
\tau\bm{D}_{\E}\mF(\gammatnn)\le\frac{1}{2}\frac{d^+}{dt}\bigg(\mW_2^2(\bm{S}_{\E}^t\gammatnn,\gammatn)\bigg)\Big|_{t=0}\overset{\bm{(E.V.I.)}}{\le}\E(\gammatn)-\E(\gammatnn),
\end{equation}
where in the last inequality the well-known connection between displacement convexity and the \textbf{E.V.I.} is crucial, see e.g. \cite{DS}. Now, concerning the left-hand side of \eqref{eq:controldiss}, we notice that
\begin{equation}\label{eq:integralformofdis}
\begin{split}
 \bm{D}_{\E}\mF(\gammatnn)&=\limsup_{s\downarrow0}\frac{\mF(\gammatnn)-\mF(\bm{S}_{\E}^s\gammatnn)}{s}\\&=\limsup_{s\downarrow0}\int_0^1\left(-\frac{d}{dz}\Big|_{z=st}\mF(\bm{S}_{\E}^{z}\gammatnn)\right)\,dt.
\end{split}
\end{equation}
Hence, let us focus on the time derivative inside the above integral. Keep in mind that $\bm{S}_{\E}^t\gammatnn$ is the solution to the decoupled system of nonlinear parabolic equations with strictly positive coefficients, system \eqref{eq:heatporsyst}. Then, using the $C^\infty$-regularity of $\bm{S}_{\E}^t\gammatnn$ we may infer
\begin{equation}\label{eq:derivfuncE}
	\begin{split}
		\ddt \mF(\bm{S}_{\E}^t\gammatnn) =& -\int_\R \left( \seut \rhotnn - \sedt\etatnn \right)
        \left( [\seut\rhotnn]^m - [\sedt\etatnn]^m
        \right) \, \d x\\
&-\varepsilon\int_\R
 	\left( \seut \rhotnn-\sedt\etatnn \right)^2\, \d x,\\
	\end{split}
\end{equation}
where the terms at infinity in the integration by parts vanish due to the rapid decay of the solution to a nondegenerate diffusion equation \cite{LSU}. \eqref{eq:derivfuncE} yields
\begin{equation}
	\label{eq:dissipation}
	\ddt \mF(\bm{S}_{\E}^t\gammatnn) \leq 0.
\end{equation}
Combining \eqref{eq:dissipation} with \eqref{eq:integralformofdis} and \eqref{eq:controldiss} we obtain
\begin{equation*}
0\le\tau\bm{D}_{\E}\mF(\gammatnn)\le\E(\gammatn)-\E(\gammatnn),
\end{equation*}
whence
\begin{equation}
\E(\gammatn)\le\E(\gamma_0),
\end{equation}
for all $n\in\mathbb{N}$. By Remark \ref{rem:controlbelowentropy} we  control the log-entropic part of $\E$ and we deduce that, as $\varepsilon\downarrow0$,
\begin{equation}
	\int_\R \left[\rhotn(x)\right]^m + \left[ \etatn(x) \right]^m \, \d x \leq \int_\R \left[ \rho_0(x) \right]^m + \left[ \eta_0(x) \right]^m \, \d x,
\end{equation}
whence
\begin{equation}
	\int_\R \left[ \rhot(t,x) \right]^m + \left[ \etat(t,x) \right]^m\,  \d x \leq \int_\R \left[ \rho_0(x) \right]^m + \left[ \eta_0(x)\right]^m \, \d x,
\end{equation}
for every $t \geq 0$. This proves estimate \eqref{eq:linflmbound} for $m\in (1, \infty)$. In order to extend the estimate to the case $m=+\infty$, we observe that
\begin{align*}
  \|\rhot(t,\cdot)\|_{L^\infty(\R)}+\|\etat(t,\cdot)\|_{L^\infty(\R)}&\leq \limsup_{m\rightarrow +\infty}\left[\|\rhot(t,\cdot)\|_{L^m(\R)}+\|\etat(t,\cdot)\|_{L^m(\R)}\right]\\
  & \leq \limsup_{m\rightarrow +\infty}\left[\|\rho_0\|_{L^m(\R)}+\|\eta_0\|_{L^m(\R)}\right]\\
  &\leq \limsup_{m\rightarrow +\infty}\left[\|\rho_0\|_{L^\infty(\R)}^{\frac{m-1}{m}}\|\rho_0\|_{L^1(\R)}^{\frac{1}{m}}+\|\eta_0\|_{L^\infty(\R)}^{\frac{m-1}{m}}
  \|\eta_0\|_{L^1(\R)}^{\frac{1}{m}}\right]\\
  & = \|\rho_0\|_{L^\infty(\R)} + \|\eta_0\|_{L^\infty(\R)}.
\end{align*}
We conclude that, for all $T>0$, the subsequence $\{\gamma_{\tau_k}\}_{k\in\mathbb{N}}$ obtained from Proposition \ref{prop:narrowcompactness} is uniformly bounded in $L^\infty([0,T];L^m(\R))^2$. By Banach-Alaoglu's Theorem, in case $m$ is finite, there exists a subsequence $(\tau_k')\subset(\tau_k)$ such that $\{\gamma_{\tau_k'}\}_{k\in\mathbb{N}}$ converges in the weak $L^m_{x,t}$ topology to some limit $\gamma'\in L^m([0,T]\times \R)^2$. In the case of $m=+\infty$ the above subsequence exists in the weak-$\star$ topology of $L^\infty([0,T]\times \R)$. Due to Proposition \ref{prop:narrowcompactness} the limit $\gamma'$ coincides with $\gamma$ on $[0,T]$. By a simple weak lower semi-continuity argument we deduce that $\gamma$ inherits the same estimates as the approximating sequence $\gamma_{\tau_k}$. Since \replace{The}{} $T$ was arbitrary we conclude the proof.
\end{proof}

\begin{lemma}\label{lem:momunifdounded}
Let $\gamma_0=(\rho_0,\eta_0)\in \mptra\times\mptra$ be such that $\G(\gamma_0)<+\infty$. The piecewise constant interpolation $\gammat=(\rhot,\etat)$ satisfies
\begin{equation}\label{eq:linfmom}
\int_\R|x|^2[\rhot(t,x)+\etat(t,x)]\, \d x \leq \int_\R|x|^2[\rho_0(x)+\eta_0(x)]\, \d x,
\end{equation}
for every $t\ge0$.In addition, the limiting curve $\gamma$ has uniformly bounded second order moments in time.
\end{lemma}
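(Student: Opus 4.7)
The plan is to mirror the flow-interchange argument of Proposition \ref{prop:lpestimatest}, using the auxiliary functional $\G$ from \eqref{eq:auxiliaryfunctionalG} and its $0$-flow $\bm{S}_\G$ governed by \eqref{eq:linfokkplanksys} in place of $\E$ and $\bm{S}_\E$. Testing the JKO minimality of $\gamma_\tau^{n+1}$ against the perturbation $\bm{S}_\G^s\gamma_\tau^{n+1}$, dividing by $s$, letting $s\downarrow 0$, and invoking the EVI for $\G$ produces, exactly as in \eqref{eq:controldiss}, the discrete inequality
\begin{equation*}
\tau\, \bm{D}_\G \mF(\gamma_\tau^{n+1}) \le \G(\gamma_\tau^n) - \G(\gamma_\tau^{n+1}).
\end{equation*}
Telescoping in $n$ would then reduce the lemma to the dissipation bound $\bm{D}_\G \mF(\gamma_\tau^{n+1})\ge 0$, i.e. to the monotonicity $\ddt \mF(\bm{S}_\G^t \gamma_\tau^{n+1})\le 0$ for $t>0$.

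The heart of the argument, and the only genuine departure from Proposition \ref{prop:lpestimatest}, is precisely this monotonicity. Writing $U_i(t,\cdot) := \sgit\gamma_\tau^{n+1}$ and introducing the first component of the Wasserstein velocity of $\mF$, namely $V:= -\partial^0 N\star U_1 + \partial^0 N \star U_2$, the skew-symmetry visible in \eqref{eq:subdiff_sign} would let me collapse the contributions of the two components into a single integral against $\partial_t(U_1-U_2)$. Substituting the flow equations $\partial_t U_i = \partial_x(2xU_i) + \varepsilon\partial_{xx}U_i$ and integrating by parts, together with the distributional identity $\partial_x V = -2(U_1-U_2)$ (which follows from $\partial_x \sign = 2\delta_0$), a second integration by parts would collapse the confining term into $-\tfrac12\int_\R V^2\,\d x$ and the diffusive term into $-2\varepsilon\int_\R (U_1-U_2)^2\,\d x$, both non-positive. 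The step I would be most careful about is the vanishing of the boundary terms at infinity: under the normalisation $M_\rho = M_\eta = 1$ one has the explicit representation $V(t,x) = 2\bigl(F_{U_2}(t,x) - F_{U_1}(t,x)\bigr)$, and Chebyshev applied to the uniform second moment bound \eqref{eq:estimate2formoment} gives $|V(t,x)| \le C/|x|^2$ as $|x|\to\infty$, which makes $xV^2\big|_{-\infty}^{+\infty}$ vanish. The smoothing and tail decay for solutions of \eqref{eq:linfokkplanksys} (cf. \cite{LSU}) justify all these manipulations for $t>0$.

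With the monotonicity in hand, the telescopic consequence $\G(\gamma_\tau^n)\le \G(\gamma_0)$ follows for every $n$. The log-entropic contribution to $\G$ is bounded below uniformly in $n$ via Remark \ref{rem:controlbelowentropy} combined with the already-established second moment estimate \eqref{eq:estimate2formoment}, so sending $\varepsilon\downarrow 0$ extracts precisely the quadratic-moment bound \eqref{eq:linfmom} for each piecewise constant interpolation $\gammat$. Finally, lower semicontinuity of the second moment under the narrow convergence of Proposition \ref{prop:narrowcompactness} transfers the bound to the limit curve $\gamma$ uniformly in $t$, establishing the uniform-in-time second moment control claimed in the lemma.
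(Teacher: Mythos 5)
Your proposal is correct and follows essentially the same route as the paper: flow interchange with the auxiliary functional $\G$ and its $0$-flow, the \textbf{E.V.I.} to get the discrete inequality, monotonicity of $\mF$ along the Fokker--Planck flow via integration by parts and the identity $N''\star\kappa=2\kappa$, vanishing boundary terms from moment control and the decay of solutions to \eqref{eq:linfokkplanksys}, then $\varepsilon\downarrow 0$ using Remark \ref{rem:controlbelowentropy} and lower semicontinuity to pass to the limit curve. The only cosmetic difference is that the paper writes out and estimates all four boundary terms individually (using the second moments of the flowed densities rather than of the interpolants), whereas you collapse the computation around $\kappa=U_1-U_2$ and highlight only the $xV^2$ term; the substance is identical.
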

\begin{proof}
Arguing as in the proof of Proposition \ref{prop:lpestimatest}, from the scheme \eqref{eq:jko} we easily get
\begin{equation}\label{eq:controldissmom}
\tau\bm{D}_{\G}\mF(\gammatnn) \leq \frac12 \frac{\d^+}{\d t}\bigg(\mW_2^2(\bm{S}_{\G}^t\gammatnn,\gammatn)\bigg)\Big|_{t=0}\overset{\bm{(E.V.I.)}}{\le}\G(\gammatn)-\G(\gammatnn),
\end{equation}
where, again, the left-hand side of \eqref{eq:controldissmom} can be rewritten as
\begin{equation}\label{eq:integralformofdismom}
 \bm{D}_{\G}\mF(\gammatnn)=\limsup_{s\downarrow0}\int_0^1\left(-\frac{\d}{\d z}\Big|_{z=st}\mF(\bm{S}_{\G}^{z}\gammatnn)\right)\, \d t.
\end{equation}
Since $\bm{S}_{\G}^t\gammatnn$ is the solution to system \eqref{eq:linfokkplanksys}, which is a decoupled system of linear Fokker-Planck equations, we use its $C^\infty$-regularity to obtain
\begin{equation}\label{eq:derivfuncG}
\begin{split}
	\ddt\mF(\bm{S}_{\G}^t\gammatnn) =
    &-\int_\R|N'\star(\sgut\rhotnn-\sgdt\etatnn)|^2\, \d x\\
    &-\varepsilon\int_\R(\sgut\rhotnn-\sgdt\etatnn)^2\, \d x\\
    &+\frac{1}{2}\bigg[ N\star \left(\sgut\rhotnn-\sgdt\etatnn\right) \, N'\star \left( \sgut\rhotnn-\sgdt\etatnn\right) \bigg]_{x=-\infty}^{x=+\infty}\\
    &+\varepsilon \bigg[N'\star \left(\sgut\rhotnn-\sgdt\etatnn\right) \,  \left(\sgut\rhotnn-\sgdt\etatnn\right)\bigg]_{x=-\infty}^{x=+\infty}\\
    &-\bigg[2xN\star \left(\sgut\rhotnn-\sgdt\etatnn\right) \,\left(\sgut\rhotnn-\sgdt\etatnn \right) \bigg]_{x=-\infty}^{x=+\infty}\\
    &-\varepsilon\bigg[N\star \left(\sgut\rhotnn-\sgdt\etatnn\right) \, \partial_x \left(\sgut\rhotnn-\sgdt\etatnn\right)\bigg]_{x=-\infty}^{x=+\infty}.
\end{split}
\end{equation}
Let us consider the boundary terms individually. For convenience we set $\rho^t=\sgut\rhotnn$, $\eta^t=\sgdt\etatnn$, and $\kappa:=\rho^t-\eta^t$ in order to simplify the notation. Concerning the first term, we have
\begin{equation}\label{eq:boundterm1}
\begin{split}
	\left| N\star\kappa\, N'\star\kappa\right|
	&=\left|\,\int_\R|x-y|\kappa(y)\, \d y\,  \int_\R\sign(x-y)\kappa(y)\, \d y\right|\\
    &=\left|\int_\R(x-y)\kappa(y)\, \d y + 2\int_x^{+\infty}(y-x)\kappa(y)\, \d y\right| \times \left|\int_\R \kappa(y)\, \d y-2\int_x^{+\infty}\kappa(y)\, \d y\right|\\
    &=\left|-\int_\R y\kappa(y)\, \d y-2x\int_x^{+\infty}\kappa(y)\, \d y+2\int_x^{+\infty}y\kappa(y)\, \d y\right|\times\left|-2\int_x^{+\infty}\kappa(y)\, \d y\right|\\&\le2\left( 2+2m_2(\rho^t)+2m_2(\eta^t) + \frac{2}{|x|} (m_2(\rho^t) + m_2(\eta^t))\right)\frac{2\left[m_2(\rho^t)+m_2(\eta^t)\right]}{|x|^2},
\end{split}
\end{equation}
which vanishes as $|x|\to+\infty$. Regarding the second term we note that
\begin{equation}\label{eq:boundterm2}
\begin{split}
	\left|N'\star\kappa\, \kappa\right|&=\left|\int_\R\sign(x-y)\kappa(y)\kappa(x)\, \d y\right|\\
	&\le2(\rho^t(x)+\eta^t(x))
\end{split}
\end{equation}
which vanishes as $|x|\to+\infty$ because $\rho^t$ and $\eta^t$ are solutions of linear Fokker-Planck equations decaying exponentially at infinity. Now, for the same reason, there holds
\begin{align}
	\label{eq:boundterm3}
	\begin{split}
		\left|2xN\star\kappa\cdot \kappa\right|
    	&= \left|\int_\R2x|x-y|\kappa(y)\kappa(x)\, \d y\right| \\
    	&\le 2\int_\R|x|^2(\rho^t(y)+\eta^t(y))(\rho^t(x)+\eta^t(x))\, \d y\\
    	&\quad +2\int_\R|x| |y|(\rho^t(y)+\eta^t(y))(\rho^t(x)+\eta^t(x))\, \d y\\
    	&\le 4|x|^2(\rho^t(x)+\eta^t(x))+(4+m_2(\rho^t)+m_2(\eta^t))|x|(\rho^t(x)+\eta^t(x)),
	\end{split}
\end{align}
vanishes $|x|\to+\infty$. As for the last boundary term we get
\begin{align}\label{eq:boundterm4}
	\begin{split}
		|N\star\kappa\,\partial_x\kappa|
    	&=\left|\int_\R|x-y|\kappa(y)\partial_x\kappa(x) \d y\right|\\\
    	&\le\int_\R|x|(\rho^t(y)+\eta^t(y))(|\partial_x\rho^t(x)|+|\partial_x\eta^t(x)|)\, \d y\\
		&\quad +\int_\R|y|(\rho^t(y)+\eta^t(y))(|\partial_x\rho^t(x)|+|\partial_x\eta^t(x)|)\, \d y\\
		&\le2|x|(|\partial_x\rho^t(x)|+|\partial_x\eta^t(x)|)+(2+m_2(\rho^t)+m_2(\eta^t))(|\partial_x\rho^t(x)|+|\partial_x\eta^t(x)|),
	\end{split}
\end{align}
which, again, goes to 0 as $|x|\to+\infty$. Using \eqref{eq:boundterm1}, \eqref{eq:boundterm2}, \eqref{eq:boundterm3}, and \eqref{eq:boundterm4} in \eqref{eq:derivfuncG} we deduce that
\begin{equation}
	\ddt \mF(\bm{S}_{\G}^t\gammatnn)\le0,
\end{equation}
which, in combination with \eqref{eq:integralformofdismom} and \eqref{eq:controldissmom}, gives
$$
	\G(\gammatn)\le\G(\gamma_0).
$$
Hence, taking into account Remark \ref{rem:controlbelowentropy}, letting $\varepsilon\to0^+$ we get
\begin{equation}
	\int_\R |x|^2 [\rhot(t,x)+\etat(t,x)] \, \d x \le \int_\R|x|^2[\rho_0(x)+\eta_0(x)]\, \d x,
\end{equation}
for every $t\ge0$, i.e. estimate \eqref{eq:linfmom}. By similar considerations to the ones at the end of the proof of Proposition \ref{prop:lpestimatest} and from Proposition \ref{prop:narrowcompactness} we conclude that $\gamma$ has second order moments uniformly bounded in time.
\end{proof}
{\color{black}
\begin{remark}[Preservation of absolute continuity]
    A natural question is to ask as to whether absolute continuity of solutions is kept provided that the initial data satify $\rho_0, \eta_0 \in L^1(\R)\cap\mptr$. The answer to this question is positive. In fact, $\rho_0, \eta_0\in L^1(\R)\cap\mptr$ implies the existence of two nonnegative, superlinear, and convex functions $\Phi_1, \Phi_2$ with $\Phi_i(0)=0$, for $i=1,2$, satisfying $\Phi_1(\rho_0), \Phi_2(\eta_0)\in L^1(\R)$. It is easy to check that, individually, the two $\Phi_i$ are geodesically convex as they satisfy the McCann condition trivially in one dimension. Setting $\Phi:=\sup_{i=1,2}(\Phi_i)$, we readily verify that this function satisfies the McCann condition and is therefore geodesically convex. Moreover, by this choice, $\Phi(\rho_0), \Phi(\eta_0)\in L^1(\R)$. Applying the flow interchange argument as above for the extended functional
    $$
        \curlyE(\nu_1, \nu_2) = \int_\R \Phi(\nu_1) + \Phi(\nu_2) \d x +\epsilon \mathcal{H}(\nu_1) + \epsilon\mathcal{H}(\nu_2),
    $$
    yields uniform bounds of the form
    \begin{align*}
        \int_\R \Phi(\rho_\tau^{n+1}) + \Phi(\eta_\tau^{n+1}) \d x \leq \int_\R  \Phi(\rho_0) + \Phi(\eta_0) \d x.
    \end{align*}
    The uniform control of superlinear function $\Phi$ composed with the two species then yields uniform bound on $(\rho_\tau^n)_{\tau>0, n \in \N}, (\eta_\tau^n)_{\tau>0, n \in \N}$ in $L^1(\R)$. Together with the uniform control of the second order moments we may invoke the Dunford-Pettis theorem to obtain weak compactness in $L^1(\R)$.
\end{remark}
}

The final step in this procedure is to prove the curve $\gamma$ obtained in Proposition \ref{prop:narrowcompactness} is a curve of maximal slope for $\mF$, arguing as in \cite{CDFFLS,DFF}. Since curves of maximal slope coincide with gradient flows, see \cite[Theorem 11.1.3]{AGS}, we actually have that $\gamma$ is a gradient flow solution to \eqref{eq:full_system} in the sense of Definition \ref{def:gradflow}. Let us summarise the procedure for the sake of completeness. We denote by $\tilde{\gamma}_\tau$ the \textit{De Giorgi variational interpolation} (cf. \cite[Definition 3.2.1]{AGS}), i.e. any interpolation $\tilde{\gamma}_\tau:[0,+\infty)\to\mptr^2$ of the discrete values $\{\gamma_\tau^n\}_{n\in\N}$ defined through scheme \eqref{eq:jko} such that
\begin{equation}\label{eq:degiorgiinterpolation}
\tilde{\gamma}_\tau(t)=\tilde{\gamma}_\tau((n-1)\tau+\delta)\in\argmin_{\gamma\in\mptr^2}\left\{\frac{1}{2\delta}\mW_2^2(\gamma_\tau^{n-1},\gamma)+\mF(\gamma)\right\},
\end{equation}
if $t=(n-1)\tau+\delta\in((n-1)\tau,n\tau]$. Now, from \cite[Theorem 3.14, Lemma 3.2.2]{AGS} it is possible to get the following energy inequality
\begin{equation}\label{eq:energyinequality}
	\mF(\gamma_0)\ge\frac{1}{2}\int_0^T\|v_k(t)\|_{L^2(\gamma_k(t))}^2\, \d t+\frac{1}{2}\int_0^T|\partial\mF|^2[\tilde{\gamma}_k(t)]\, \d t+\mF(\gamma_k(T)),
\end{equation}
where the pair $(\gamma_k,v_k)$ is the solution of the continuity equation $\partial_t\gamma_k(t)+\text{div}(v_k(t)\gamma_k(t))=0$ in the sense of distributions. Here $\gamma_k$ is the subsequence from Proposition \ref{prop:narrowcompactness} and $v_k(t)$ is the unique velocity field with minimal $L^2(\gamma_k(t))$-norm (see Remark below), and $\tilde{\gamma}_k:=\tilde{\gamma}_{\tau_k}$ is defined by \eqref{eq:degiorgiinterpolation}.
\begin{remark}[Absolutely continuous curve and the continuity equation]
Thanks to Proposition \ref{prop:narrowcompactness} we know $\gamma_k$ is an absolutely continuous curve, therefore we can identify its tangent vectors with the velocity fields $v_k(t)$ such that the continuity equation $\partial_t\gamma_k(t)+\text{div}(v_k(t)\gamma_k(t))=0$ is satisfied in a distributional sense, according to \cite[Theorem 8.3.1]{AGS}. Furthermore, \cite[Proposition 8.4.5]{AGS} asserts there is only one $v_k(t)$ with minimal $L^2(\gamma_k(t))$-norm, equal to the metric derivative of $\gamma_k(t)$ for a.e. t.
\end{remark}

Up to a subsequence both the interpolations $\gamma_\tau$ and $\tilde{\gamma}_\tau$ narrowly converge to $\gamma$ in view of Proposition \ref{prop:narrowcompactness}. Proving that $\gamma$ is a curve of maximal slope in the sense of Definition \ref{def:maximalslope} is then a consequence of the lower semi-continuity of the slope and the energy inequality \eqref{eq:energyinequality} retracing \cite[Lemma 2.7 and Theorem 2.8]{CDFFLS}. Thanks to \cite[Theorem 11.1.3]{AGS} we actually have that $\gamma$ is a gradient flow solution to \eqref{eq:full_system} in the sense of Definition \ref{def:gradflow}.
At this stage, the uniqueness of the gradient flow solutions in the sense of Definition \ref{def:gradflow} follows from the geodesic convexity of $\mF$ proven in Proposition \ref{prop:geodconvandsubdiff}, relying on \cite[Theorem 11.1.4]{AGS}. More precisely,
given two gradient flow solutions $\gamma_1(t)$ and $\gamma_2(t)$ in the sense of Definition \ref{def:gradflow}, we obtain the stability property
\begin{equation}\label{eq:contraction}
  \mW_2(\gamma_1(t),\gamma_2(t))\le \mW_2(\gamma_{1}(0),\gamma_{2}(0)),
\end{equation}
for all $t\ge0$. In addition, the unique gradient flow solution satisfies the Evolution Variational Inequality (\textbf{E.V.I.}):
\begin{equation}\label{eq:EVI1}
\frac{1}{2}\ddt \mW_2^2(\gamma(t),\bar{\gamma})
\le\mF(\bar{\gamma})-\mF(\gamma(t))
\end{equation}
for almost all $t>0$ and  all $\bar{\gamma}\in\mP(\R)^2$.

The property \eqref{eq:EVI1} can actually be used to show a stronger property, namely that $\gamma$ is a gradient flow in the sense of Definition \ref{def:solutionmeasures}, which implies uniqueness in the weaker notion of solution defined in Definition \ref{def:solutionmeasures} in view of Theorem \ref{thm:solutionmeasures}. We prove this statement in the following Theorem, which also collects all the estimates proven in this subsection.

\begin{theorem}\label{thm:regularity}
  Let $m\in (1,+\infty]$. Let $\rho_0, \eta_0\in \mP_2(\R)\cap L^m(\R)$. Then, there exists a unique $\gamma=(\rho,\eta)\in L^\infty([0,+\infty);\, \mP_2(\R)^2\cap L^m(\R)^2)$ solving \eqref{eq:full_system} in the sense of Definition \ref{def:gradflow}. Moreover, $\gamma$ is the unique solution to \eqref{eq:full_system} in the sense of Definition \ref{def:solutionmeasures} as well. Finally, we have the properties
  \begin{align*}
     \|\rho(t,\cdot)\|_{L^m(\R)} + \|\eta(t,\cdot)\|_{L^m(\R)} &\leq \|\rho_0\|_{L^m(\R)} + \|\eta_0\|_{L^m(\R)}, \\
     \int_\R |x|^2 \rho(t,x) \d x +\int_\R |x|^2 \eta(t,x) \d x &\leq \int_\R|x|^2 \rho_0(x) \d x + \int_\R|x|^2 \eta_0(x) \d x .
  \end{align*}
\end{theorem}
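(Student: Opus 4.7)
The proof assembles the existence theory already developed in this subsection via the JKO scheme, the flow-interchange regularity estimates, and the uniqueness coming from geodesic convexity; the only genuinely new ingredient is the identification of the Wasserstein gradient-flow solution with the unique $L^2$-gradient-flow solution provided by Theorem \ref{thm:solutionmeasures}.

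\textbf{Existence and regularity.} First I would invoke Proposition \ref{prop:narrowcompactness} to extract a subsequence $\gamma_{\tau_k}$ of piecewise constant interpolations converging narrowly and uniformly in $t$ to some limit $\gamma=(\rho,\eta):[0,+\infty)\to\mptr^2$. Since $\rho_0,\eta_0\in L^m(\R)$ with $\E(\gamma_0)<+\infty$, Proposition \ref{prop:lpestimatest} gives a uniform $L^\infty_t L^m_x$ bound for the discrete scheme. Weak-$\ast$ compactness in $L^\infty_t L^m_x$ (or weak compactness in $L^m_{x,t}$ when $m<\infty$) then identifies the weak limit with $\gamma$ by Proposition \ref{prop:narrowcompactness}, and weak lower semicontinuity of the norm transfers the bound to $\gamma$. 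Analogously, Lemma \ref{lem:momunifdounded} delivers the uniform second moment estimate at the discrete level, and narrow lower semicontinuity of $m_2$ passes it to the limit.

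\textbf{Limit is a gradient-flow solution (Definition \ref{def:gradflow}).} Next I would verify the energy inequality \eqref{eq:energyinequality} for the De Giorgi variational interpolants $\tilde\gamma_k$, and pass to the liminf using lower semicontinuity of the slope $|\partial\mF|$ (guaranteed by geodesic convexity, Proposition \ref{prop:geodconvandsubdiff}) and of $\mF$ itself. This shows $\gamma$ is a curve of maximal slope in the sense of Definition \ref{def:maximalslope}. By \cite[Theorem 11.1.3]{AGS}, curves of maximal slope for a $\lambda$-geodesically convex functional coincide with Wasserstein gradient-flow solutions, hence $\gamma$ satisfies Definition \ref{def:gradflow} with the velocity field given by \eqref{eq:subdiff_sign}. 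Uniqueness and the contractivity property \eqref{eq:contraction} together with the EVI \eqref{eq:EVI1} follow directly from \cite[Theorem 11.1.4]{AGS}, since $\mF$ is $0$-geodesically convex by Proposition \ref{prop:geodconvandsubdiff}.

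\textbf{Equivalence with Definition \ref{def:solutionmeasures}.} The main obstacle is to show that the gradient-flow solution $\gamma$ just constructed is also a solution in the weaker pseudo-inverse sense, after which uniqueness in that class follows from Theorem \ref{thm:solutionmeasures}. My plan is to push the evolution to the pseudo-inverse level via the isometry $\Psi$ of Proposition on $\mu\mapsto X_\mu$. Set $(X(t),Y(t)):=(\Psi(\rho(t)),\Psi(\eta(t)))\in\mathcal{C}\times\mathcal{C}$. Since $\Psi$ is an isometry between $(\mptr,W_2)$ and $(\mathcal{C},\|\cdot\|_{L^2})$, the EVI \eqref{eq:EVI1} for $\gamma$ against arbitrary reference measures translates, via \eqref{eq:W2PseudoInverses} and \eqref{eq:functional_pseudoinverse}, into an EVI in $L^2(0,1)^2$ for $(X(t),Y(t))$ against arbitrary $(\tilde X,\tilde Y)\in\mathcal{C}\times\mathcal{C}$ relative to $\bar{\mathfrak{F}}$. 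The EVI characterisation of convex-functional gradient flows in Hilbert spaces (equivalent to the subdifferential inclusion \eqref{eq:l2gradflow}, cf.\ the remarks following Definition \ref{def:l2gradflow}) then shows that $(X,Y)$ is an $L^2$-gradient flow for $\bar{\mathfrak{F}}$ in the sense of Definition \ref{def:l2gradflow}. By uniqueness in Theorem \ref{thm:l2gradientflow} this $(X,Y)$ coincides with the unique $L^2$-gradient flow starting from $(X_{\rho_0},Y_{\eta_0})$, so $\gamma$ satisfies Definition \ref{def:solutionmeasures}. The delicate points to verify carefully are the transfer of the sub-differential of $\mF$ on $\mptra\times\mptra$ (given explicitly by \eqref{eq:subdiff_sign}) to the sub-differential of $\bar{\mathfrak{F}}$ on $L^2(0,1)^2$ under $\Psi$, and the fact that the EVI test against arbitrary $\tilde Z\in L^2(0,1)^2$ (not just elements of $\mathcal{C}\times\mathcal{C}$) holds thanks to the indicator $\mathcal{I}_\mathcal{C}$ embedded in $\bar{\mathfrak{F}}$. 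Uniqueness in the sense of Definition \ref{def:solutionmeasures} inherited from Theorem \ref{thm:solutionmeasures} closes the argument.
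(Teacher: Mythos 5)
Your overall architecture matches the paper's: the existence, $L^m$, and second-moment statements are exactly the results already established in this subsection (Propositions \ref{prop:narrowcompactness} and \ref{prop:lpestimatest}, Lemma \ref{lem:momunifdounded}, and the maximal-slope/\textbf{E.V.I.} discussion), and the genuinely new step is indeed to push the \textbf{E.V.I.} \eqref{eq:EVI1} through the isometry $\Psi$ to obtain the sub-differential inclusion \eqref{eq:l2gradflow} for $Z(t)=(X_\rho(t),X_\eta(t))$. The paper does this concretely by integrating \eqref{eq:EVI1} over $[s,t]$, rewriting both sides via \eqref{eq:CoV_PseudoInverse} and \eqref{eq:W2PseudoInverses}, and letting $s\uparrow t$; this yields $-\dot Z(t)\in\partial\mathfrak{F}[Z(t)]$ directly, without ever having to transport the explicit sub-differential \eqref{eq:subdiff_sign} through $\Psi$ --- so the first ``delicate point'' you flag can be bypassed entirely by arguing at the level of the integrated inequality.

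The genuine gap is that Definition \ref{def:l2gradflow} requires more than the inclusion: it demands that $Z$ be Lipschitz on $[0,+\infty)$, i.e.\ $\frac{\d Z}{\d t}\in L^\infty(0,+\infty;L^2(0,1)^2)$. Your appeal to ``the EVI characterisation of convex-functional gradient flows in Hilbert spaces'' does not supply this: Br\'ezis' theorem \emph{produces} a Lipschitz solution, but to conclude that the curve $(X,Y)$ you constructed \emph{is} a solution in the sense of Definition \ref{def:l2gradflow} (and hence that $\gamma$ satisfies Definition \ref{def:solutionmeasures}, so that the uniqueness of Theorem \ref{thm:l2gradientflow} applies to it) you must verify the Lipschitz bound for that particular curve. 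The paper devotes the entire second half of its proof to this: the discrete estimate \eqref{eq:estimate1formoment} is rewritten as $\mW_2^2(\gamma_\tau(0),\gamma_\tau(h))\le 2(h+\tau)\left[\mF(\gamma_0)-\mF(\gamma_\tau(h))\right]$, one sends $\tau\downarrow 0$ using the continuity of $\mF$ with respect to $\mW_2$, translates into pseudo-inverses, and uses the definition of $\partial\mathfrak{F}$ to obtain $\|\dot Z(0)\|_{L^2(0,1)^2}^2\le 2\|\partial^0\mathfrak{F}[(X_0,Y_0)]\|_{L^2(0,1)^2}^2$; finally the contraction property \eqref{eq:contraction} gives $\|Z(t+h)-Z(t)\|\le\|Z(h)-Z(0)\|$, hence $\|\dot Z(t)\|\le\|\dot Z(0)\|$ for all $t>0$. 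Your argument is otherwise sound and follows the paper's route, but without this Lipschitz verification the identification with the unique $L^2$-gradient flow is incomplete.
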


\begin{proof}
All the statements have been proven earlier in this subsection, in particular in Proposition \ref{prop:lpestimatest} and Lemma \ref{lem:momunifdounded}. We only need to prove that $\gamma$ is a solution to \eqref{eq:full_system} in the sense of Definition \ref{def:solutionmeasures}. Recall the \textbf{E.V.I.} \eqref{eq:EVI1} for a general $\overline{\gamma}\in \mP_2(\R)^2$. Integrating the inequality \eqref{eq:EVI1} on the time interval $[s,t]$ and dividing by $t-s$ we obtain
\begin{equation*}
  \frac{1}{2(t-s)}\left[\mW_2^2(\gamma(t),\bar{\gamma})-\mW_2^2(\gamma(s),\bar{\gamma})\right]
\le\frac{1}{t-s}\int_s^t\left[\mF(\bar{\gamma})-\mF(\gamma(t'))\right] \d t'.
\end{equation*}
Consider now the pseudo-inverse variables $X_\rho$, $X_\eta$, $\overline{X}_\rho$, and $\overline{X}_\eta$ of $\rho$, $\eta$, $\overline{\rho}$, and $\overline{\eta}$ respectively, where $\gamma=(\rho,\eta)$ and $\overline{\gamma}=(\overline{\rho},\overline{\eta})$. The formulas \eqref{eq:CoV_PseudoInverse} and \eqref{eq:W2PseudoInverses} applied to our case imply
\begin{align*}
    &\frac{1}{2(t-s)}\left[\|X_\rho(t)-\overline{X}_\rho\|_{L^2}^2+\|X_\eta(t)-\overline{X}_\eta\|_{L^2}^2 -\|X_\rho(s)-\overline{X}_\rho\|_{L^2}^2+\|X_\eta(s)-\overline{X}_\eta\|_{L^2}^2 \right]\\
& \ \le\frac{1}{t-s}\int_s^t\left[\mathfrak{F}(\overline{X}_\rho,\overline{X}_\eta)-\mathfrak{F}(X_\rho(t'),X_\eta(t'))\right] \d t'.
\end{align*}
We notice that the indicator function has been considered as zero in $\mathfrak{F}$ as all the involved variables are in the cone $\mathcal{C}$. By absolute continuity in time of the curve $t\mapsto (X_\rho(t),X_\eta(t))$, recalling the expression of $\mathfrak{F}$, we can let $s\uparrow t$ and obtain
\begin{align*}
  & (\dot{X}_\rho(t),X_\rho(t)-\overline{X}_\rho)_{L^2(0,1)}+(\dot{X}_\eta(t),X_\eta(t)-\overline{X}_\eta)_{L^2(0,1)}\leq \mathfrak{F}(\overline{X}_\rho,\overline{X}_\eta)-\mathfrak{F}(X_\rho(t),X_\eta(t)),
\end{align*}
which, since $(\overline{X}_\rho,\overline{X}_\eta)$ was arbitrary, is equivalent to state that $Z(t)=(X_\rho(t),X_\eta(t))$ satisfies
\[
	-\dot{Z}(t)\in \partial\mathfrak{F}[Z(t)].
\]
The proof will be completed once we show that $Z(t)$ is a Lipschitz curve. Recall the estimate \eqref{eq:estimate1formoment} at the level of  the JKO scheme, which can be rewritten as
\[\mW_2^2(\gamma_\tau(0),\gamma_\tau(h))\leq 2(h+\tau)[\mF(\gamma_0)-\mF(\gamma_\tau(h))],\]
for all $h>0$. Sending $\tau\downarrow 0$ and using the fact that the functional $\mF$ is continuous w.r.t. $\mW_2$, we get
\[\frac{1}{h^2}\mW_2^2(\gamma_0,\gamma(h))\leq \frac{2}{h}[\mF(\gamma_0)-\mF(\gamma(h))].\]
In the pseudo-inverse formalism the above estimate reads
\[\frac{1}{h^2}\left[\|X_\rho(h)-X_0\|_{L^2}^2+\|X_\eta(h)-Y_0\|_{L^2}^2\right]\leq \frac{2}{h}[\mathfrak{F}(X_0,Y_0)-\mathfrak{F}(X_\rho(h),X_\eta(h))],\]
where $X_0$ and $Y_0$ are the pseudo-inverses corresponding to $\rho_0$ and $\eta_0$, respectively. The definition of sub-differential, $\partial \mathfrak{F}$, then implies
\[\lim_{h\downarrow 0}\frac{1}{h^2}\left[\|X_\rho(h)-X_0\|_{L^2}^2+\|X_\eta(h)-Y_0\|_{L^2}^2\right]\leq \lim_{h\downarrow 0}\frac{2}{h}\left[(\tilde{X}_\rho,X_0-X_\rho(h))_{L^2}+(\tilde{X}_\eta,Y_0-X_\eta(h))_{L^2}\right],
\]
for all $(\tilde{X}_\rho,\tilde{X}_\eta)\in \partial \mathfrak{F}[(X_0,Y_0)]$. Hence, we easily get that $\dot{Z}(0)$ is bounded in $L_2(0,1)^2$ by the estimate
\[\|\dot{Z}(0)\|_{L_2^2}\leq 2\|\partial^0 \mathfrak{F}[(X_0,Y_0)]\|_{L_2^2}.\]
Finally, the stability property \eqref{eq:contraction} implies for all $t,h>0$
\[
	\|Z(t+h)-Z(t)\|_{L_2^2}\leq \|Z(h)-Z(0)\|_{L_2^2}.
\]
Upon dividing  by $h$ and letting $h\downarrow 0$ we get
\[
	\|\dot{Z}(t)\|_{L_2^2}\leq\|\dot{Z}(0)\|_{L_2^2},
\]
which gives the desired regularity for $Z(t)$.
\end{proof}

\section{Steady states and minimisers of the energy}\label{sec:steady_states}
In what follows we shall study the energy \eqref{eq:enfunctional} associated to system \eqref{eq:first}. First we shall see that the energy can be written in a completely symmetric way which then allows us to show its boundedness from below by zero.
\begin{align*}
	\mathcal{F}(\rho, \eta) &= -\frac12 \int_\R  \rho N\star \rho \d x - \frac12 \int_\R \eta N\star \eta\d x + \int_\R \rho N\star\eta\d x\\
    &=-\frac12 \iint_{\R^2}N(x-y)\big[\rho(x)\rho(y) -2 \rho(x)\eta(y) + \eta(x)\eta(y)\big] \d y\d x\\
    &=-\frac12 \iint_{\R^2}N(x-y)\big[\rho(x)[\rho(y) - \eta(y)] - [\rho(x) + \eta(x)]\eta(y)\big] \d y\d x.
\end{align*}
As the kernel, $N(x)=|x|$, is symmetric we may swap the roles of $x$ and $y$ in the second term in the integral to obtain
\begin{align*}
\mathcal{F}(\rho, \eta) &=-\frac12 \iint_{\R^2}N(x-y)\big[\rho(x)[\rho(y) - \eta(y)] - [\rho(y) + \eta(y)]\eta(x)\big] \d y\d x\\
    &= -\frac12\int_\R (\rho-\eta) N\star (\rho - \eta)\d x,
\end{align*}
hence the energy does not depend on the individual densities but merely on their difference. By abuse of notation we shall write
\begin{align*}
	\mathcal{F}(\kappa) := -\frac12 \int_\R \kappa N\star\kappa \d x,
\end{align*}
for $\kappa\in L^1((1+|x|^2)\d x)$ with zero mean. We  introduce the set  of $L^1$-functions with finite second order moments with zero mean
\begin{align*}
	L_0^1:= \left\{f\in L^1\big((1+|x|^2)\d x\big)\,\bigg|\, \int_\R f\d x = 0\right\},
\end{align*}
in order to formulate the following Proposition establishing the boundedness of the energy functional.
\begin{proposition}[Characterisation of energy minimisers -- 1]\label{prop:funcpositive}There holds
\begin{align*}
	\mathcal{F}(\kappa) \geq 0,
\end{align*}
for any $\kappa \in L_0^1\big((1+|x|^2)\d x\big)$. Moreover $\mathcal{F} = 0$ if and only if $\kappa = 0$ almost everywhere.
\end{proposition}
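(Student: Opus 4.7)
My plan is to reduce the claim to a sum-of-squares identity by using a layer-cake representation of the Newtonian kernel. The key algebraic observation is that for any $x,y\in\R$,
\begin{equation*}
|x-y| = \int_\R \bigl(\mathbf{1}_{\{z<x\}}-\mathbf{1}_{\{z<y\}}\bigr)^{2}\,\mathrm{d}z,
\end{equation*}
since the integrand is the indicator that $z$ lies between $\min(x,y)$ and $\max(x,y)$. Inserting this into the definition of $\mathcal{F}(\kappa)$ will rewrite the quadratic form $-\tfrac12\iint|x-y|\kappa(x)\kappa(y)\,\mathrm{d}x\,\mathrm{d}y$ in a way where the zero-mean condition $\int\kappa=0$ can be exploited to cancel the pieces that would otherwise be individually divergent.

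Next I would apply Fubini--Tonelli to swap the order of integration between $z$ and $(x,y)$. This is legitimate because the integrand is dominated by $|x-y||\kappa(x)||\kappa(y)|$, whose triple integral coincides with $\iint|x-y||\kappa(x)||\kappa(y)|\,\mathrm{d}x\,\mathrm{d}y$, and the latter is finite thanks to $\kappa\in L^{1}((1+|x|^{2})\mathrm{d}x)$ (bound $|x-y|\le|x|+|y|$ and use H\"older / the second-moment assumption). After expanding $(\mathbf{1}_{z<x}-\mathbf{1}_{z<y})^{2}=\mathbf{1}_{z<x}-2\mathbf{1}_{z<x}\mathbf{1}_{z<y}+\mathbf{1}_{z<y}$ and integrating out one of the variables, the two ``linear'' pieces each pick up a factor $\int\kappa=0$ and drop out. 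Only the cross term survives, giving, with $K^{c}(z):=\int_{z}^{\infty}\kappa(y)\,\mathrm{d}y$,
\begin{equation*}
\iint(\mathbf{1}_{z<x}-\mathbf{1}_{z<y})^{2}\kappa(x)\kappa(y)\,\mathrm{d}x\,\mathrm{d}y = -2\,K^{c}(z)^{2}.
\end{equation*}

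Combining these two steps yields $\mathcal{F}(\kappa)=\int_\R K^{c}(z)^{2}\,\mathrm{d}z\ge 0$, which is exactly the first assertion. For the equality case, $\mathcal{F}(\kappa)=0$ forces $K^{c}(z)=0$ for almost every $z$; since $K^{c}$ is absolutely continuous with distributional derivative $-\kappa$, this gives $\kappa=0$ almost everywhere. The only mildly subtle point is the Fubini justification, but it is handled cleanly by the $L^{1}((1+|x|^{2})\mathrm{d}x)$ assumption; the rest is bookkeeping. (An equivalent way to present the identity $\mathcal{F}(\kappa)=\int(K^{c})^{2}\mathrm{d}z$ is to observe, via integration by parts, that $\mathcal{F}(\kappa)=\|u'\|_{L^{2}}^{2}$ with $u=\tfrac12 N\star\kappa$, whose derivative is a primitive of $\kappa$; the zero-mean and moment bounds guarantee the boundary terms vanish.)
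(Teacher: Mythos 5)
Your proof is correct, and it reaches the same key identity as the paper --- namely that $\mathcal{F}(\kappa)$ equals the squared $L^2$-norm of a primitive of $\kappa$ (your $\int_\R K^c(z)^2\,\d z$ is exactly the paper's $\frac14\int_\R|N'\star\kappa|^2\,\d x$, since $N'\star\kappa=-2K^c$ once $\int\kappa=0$) --- but by a genuinely different derivation. The paper invokes the distributional identity $\kappa=\frac12 N''\star\kappa$ (with $N/2$ the fundamental solution of the one-dimensional Laplacian) and then integrates by parts, which forces it to verify that the boundary term $N'\star\kappa\,N\star\kappa$ vanishes at infinity by the moment estimates of Lemma \ref{lem:momunifdounded}; for a general $\kappa\in L^1_0((1+|x|^2)\d x)$ this computation is somewhat formal and needs the decay analysis to be made rigorous. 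Your layer-cake representation $|x-y|=\int_\R(\mathbf{1}_{\{z<x\}}-\mathbf{1}_{\{z<y\}})^2\,\d z$ followed by Fubini--Tonelli sidesteps both the distributional identity and the boundary-term discussion entirely: the only analytic input is the absolute integrability of $|x-y|\,|\kappa(x)|\,|\kappa(y)|$, which the second-moment hypothesis supplies directly, and the zero-mean condition enters transparently to kill the two linear terms of the expanded square (note that these terms are not separately absolutely integrable in all three variables jointly, so the splitting must be performed for fixed $z$ after Fubini, as you in effect do). The equality case is handled identically in both arguments. In short, your route is more elementary and self-contained, at the cost of a slightly longer bookkeeping computation; the paper's route is shorter on the page but leans on the earlier boundary-term estimates and on treating $\kappa$ as smooth enough for the integration by parts.
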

\begin{proof}
Let $\kappa \in L^1\big((1+|x|^2)\d x\big)$ be arbitrary. It is well-known that
\begin{align*}
	\kappa = \delta\star \kappa = \frac12 N''\star \kappa,
\end{align*}
where the last equality holds due to the fact that $N/2$ is the fundamental solution of the Laplace equation in one dimension. Thus we may write
\begin{align*}
	\mathcal{F}(\kappa) &= -\frac12 \int_\R \kappa N\star \kappa \d x\\
    &= -\frac12 \int_\R \delta \star \kappa N \star \kappa \d x\\
    &=-\frac14 \int_\R  N''\star \kappa N\star\kappa \d x\\
&= - \bigg[\frac14N'\star \kappa\, N\star \kappa\bigg]_{x=-\infty}^{x=+\infty} + \frac14 \int_\R |N'\star \kappa|^2\d x,
\end{align*}
by an integration by parts. Arguing as in the proof of Proposition \ref{lem:momunifdounded}, the boundary term vanishes. Hence we conclude
\begin{align*}
	\mathcal{F}(\kappa) = \frac14 \int_{\R} \big|N'\star\kappa\big|^2\d x \geq 0.
\end{align*}
Clearly, equality holds if and only if $N'\star \kappa = 0$. Differentiating this expression once yields the second assertion and concludes the proof.
\end{proof}
Notice that the energy is not bounded from below in case of different masses for $\rho$ and $\eta$ as specified in Remark \ref{rem:different-masses-steady-states}.
\note{Old Prop 8 and Prop 9 are merged into this new Proposition. Please check carefully!}
\begin{proposition}[Energy minimisers are steady states]\label{prop:char_steady}
A pair $(\rho,\eta)\in\mptra\times\mptra$ is an energy minimiser if and only if it is a steady states of system \eqref{eq:full_system}.
\end{proposition}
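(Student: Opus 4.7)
The plan is to combine the nonnegativity of $\mathcal{F}$ from Proposition \ref{prop:funcpositive} with the explicit formula for $\partial^0\mathcal{F}$ and the $0$-geodesic convexity established in Proposition \ref{prop:geodconvandsubdiff}. The equivalence then splits into one near-trivial direction and a converse that rests on convexity.

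For the direction \emph{minimiser implies steady state}, I will invoke Proposition \ref{prop:funcpositive}: $\mathcal{F}(\rho,\eta)=\mathcal{F}(\rho-\eta)\ge 0$ vanishes if and only if $\rho-\eta=0$ a.e. Since $\mathcal{F}(\sigma,\sigma)=0$ for every $\sigma\in\mptra$, the infimum of $\mathcal{F}$ is $0$, attained exactly on the diagonal $\{\rho=\eta\}$. On this diagonal the velocity fields appearing in the continuity-equation form \eqref{eq:gradflowsys}, namely $v_1=N'\star(\rho-\eta)$ and $v_2=-N'\star(\rho-\eta)$, vanish identically, hence $\partial_t\rho=\partial_t\eta=0$ and $(\rho,\eta)$ is trivially a stationary solution.

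For the converse I plan to proceed in three steps. First, setting $\kappa:=\rho-\eta$, I will cast the steady-state condition in the distributional form $\partial_x(\rho\,N'\star\kappa)=0$ and $\partial_x(\eta\,N'\star\kappa)=0$. Since $|N'\star\kappa|\le\|\kappa\|_{L^1}\le 2$, both products $\rho\,N'\star\kappa$ and $\eta\,N'\star\kappa$ belong to $L^1(\R)$, and a distributional constant that is integrable must be identically zero, so
\[
    \rho\,N'\star\kappa=0 \text{ a.e.}, \qquad \eta\,N'\star\kappa=0 \text{ a.e.}
\]
Second, by the formula for $\partial^0\mathcal{F}$ in Proposition \ref{prop:geodconvandsubdiff}, this is precisely the statement that $\partial^0\mathcal{F}(\rho,\eta)=0$ in $L^2(\rho)\times L^2(\eta)$, i.e.\ $0\in\partial\mathcal{F}(\rho,\eta)$. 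Third, exploiting the $0$-geodesic convexity of $\mathcal{F}$ (cf.\ \cite[Theorem 10.1.6]{AGS}), the global subdifferential inequality with the zero subgradient yields $\mathcal{F}(\bar\rho,\bar\eta)\ge\mathcal{F}(\rho,\eta)$ for every $(\bar\rho,\bar\eta)\in\mptra\times\mptra$, so $(\rho,\eta)$ is a global minimiser.

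The main obstacle is the first step of the converse, namely the passage from the distributional identity $\partial_x(\rho\,N'\star\kappa)=0$ to the pointwise equality $\rho\,N'\star\kappa=0$ almost everywhere. I expect this to follow immediately from the $L^1$ bound outlined above, but it is the only step that is not directly inherited from the preceding two propositions; the remainder is a purely convex-analytic consequence of Propositions \ref{prop:funcpositive} and \ref{prop:geodconvandsubdiff}.
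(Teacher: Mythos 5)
Your proof is correct, and while the forward direction (minimiser $\Rightarrow$ steady state) coincides with the paper's — both reduce to Proposition \ref{prop:funcpositive} giving $\rho=\eta$ a.e.\ and then observe that the velocity fields in \eqref{eq:gradflowsys} vanish — your converse follows a genuinely different route. The paper treats the stationary solution as a (constant-in-time) gradient-flow trajectory and feeds it into the energy inequality \eqref{eq:energyinequality}: since $\mF(\gamma(T))=\mF(\gamma_0)$, the nonnegative dissipation terms must vanish, giving $|\partial\mF|[\gamma]=0$, whence $\rho=\eta$ a.e.\ and minimality via Proposition \ref{prop:funcpositive}. You instead work directly with the stationary distributional equation: the observation that $\rho\,N'\star(\rho-\eta)$ is an $L^1$ function with vanishing distributional derivative, hence identically zero, is a clean elementary step that the paper does not make explicit; you then identify this with $0\in\partial\mF(\rho,\eta)$ and invoke the global subdifferential inequality for the $0$-geodesically convex $\mF$ to conclude minimality, bypassing the need to prove $\rho=\eta$ in this direction altogether. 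Your approach buys a purely variational converse that does not presuppose the steady state is the gradient-flow solution selected by the theory (only that it solves \eqref{eq:gradflowsys} with $\partial_t=0$), at the cost of leaning on the convex-analytic machinery of \cite[Chapter 10]{AGS}; the paper's argument is shorter but implicitly identifies ``steady state'' with ``time-independent gradient-flow solution'' and is somewhat terse on how $|\partial\mF|[\gamma]=0$, which only gives $N'\star(\rho-\eta)=0$ on $\supp\rho\cup\supp\eta$, upgrades to $\rho=\eta$ everywhere. Both arguments are sound; yours is arguably the more self-contained of the two on the converse.
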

\begin{proof}
    In view of Proposition \ref{prop:funcpositive} we know that $(\rho,\eta)$ is an energy minimiser if and only if $\rho=\eta$ almost everywhere. Hence, Proposition \ref{prop:geodconvandsubdiff} gives $v_{i}=- \,(\partial^0\mF[\rho,\eta])_i=0$ for $i=1,2$ on  supp($\rho$), whence we conclude that $(\rho,\eta)$ is a stationary state. Now, assume that  $(\rho,\eta)$ is a stationary state of system \eqref{eq:full_system}. From the energy inequality \eqref{eq:energyinequality} we get
    $$
    	\frac{1}{2}\int_0^T\|v(t)\|_{L^2(\gamma)}^2\, \d t+\frac{1}{2}\int_0^T|\partial\mF|^2[\gamma(t)]\, \d t\le0,
    $$
    which implies $|\partial\mF|^2[\gamma]=0$. Thus, in view of Proposition \ref{prop:geodconvandsubdiff} we obtain $\rho=\eta$ almost everywhere. As a consequence of Proposition \ref{prop:funcpositive} we conclude $(\rho,\eta)$ is a minimiser of the energy $\mF$.
\end{proof}
We conclude this section by providing a characterisation for the $\omega$-limit set of a solution to system \eqref{eq:full_system}. For the sake of completeness, let us recall the definition of $\omega$-limit set according to \cite[Definition 9.1.5]{HarCaz}.
\begin{definition}\label{om_lim}
Let $(X,d)$ be a complete metric space and consider a dynamical system $\left\{S_t\right\}_{t\geq 0}$. For $x\in X$ the set
\[
 \omega(x)=\left\{y\in X \,| \,\exists \, t_n\to \infty \mbox{ s.t. } S_{t_n}\left[x\right]\to y,\,\mbox{ as }n\to\infty\right\}
\]
is called $\omega-$limit set of $x$.
\end{definition}

Now, let us state the following Theorem.

\begin{theorem}\label{thm:omega}
Let $\gamma=(\rho,\eta)$ be the solution to system \eqref{eq:full_system} with initial datum $\gamma_0=(\rho_0,\eta_0)\in(\mptra\cap L^m(\R))^2$. Then
$$\omega(\gamma)\subseteq\{(\rho,\eta)\in(\mptra\cap L^m(\R))^2|\rho=\eta\ \text{a.e.}\}.
$$
\end{theorem}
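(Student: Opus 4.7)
The approach is a LaSalle-type argument based on the gradient flow structure and the fact that the energy is bounded below by zero. Since $\mF(\gamma(t))$ is non-increasing (from the curve of maximal slope property, Definition \ref{def:maximalslope}) and $\mF\ge 0$ by Proposition \ref{prop:funcpositive}, it converges to some $\mF_\infty\ge 0$ as $t\to\infty$.

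The key auxiliary fact is that $\mF$ is continuous in the Wasserstein-$2$ topology on subsets of $\mptra^2$ with uniformly bounded second moments. Using the representation $\mF(\rho,\eta)=\|F_\rho-F_\eta\|_{L^2(\R)}^2$ obtained in the proof of Proposition \ref{prop:funcpositive}, together with the uniform second-moment bound of Lemma \ref{lem:momunifdounded} (which yields a Markov-type tail $1-F_\rho(x)\le m_2(\rho)/x^2$ for $x>0$, and analogously at $-\infty$), one deduces uniform integrability of $|F_{\rho_n}-F_{\eta_n}|^2$ at infinity along any $\mW_2$-convergent sequence. Combined with the pointwise convergence of CDFs (continuous in view of the absolute continuity of the limit from Theorem \ref{thm:regularity}), dominated convergence yields $\mW_2$-continuity of $\mF$.

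Now fix $\gamma_\infty=(\rho_\infty,\eta_\infty)\in\omega(\gamma)$ with $\mW_2(\gamma(t_n),\gamma_\infty)\to 0$. By continuity of $\mF$, $\mF(\gamma_\infty)=\mF_\infty$. The contraction property \eqref{eq:contraction} makes the semigroup $(S_T)_{T\ge 0}$ continuous on $\mW_2$-bounded sets, so $\omega(\gamma)$ is invariant and the same identification gives $\mF(S_T\gamma_\infty)=\mF_\infty$ for every $T\ge 0$. Hence $\mF$ is constant along the orbit issued from $\gamma_\infty$, and the energy inequality \eqref{eq:energyinequality} applied to this orbit forces $|\partial\mF|[S_s\gamma_\infty]=0$ for a.e.\ $s>0$. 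The $0$-geodesic convexity of $\mF$ (Proposition \ref{prop:geodconvandsubdiff}) then implies that each such $S_s\gamma_\infty$ is a global minimiser of $\mF$; by Proposition \ref{prop:funcpositive}, $\rho_{S_s\gamma_\infty}=\eta_{S_s\gamma_\infty}$ a.e.\ in $\R$ for a.e.\ $s>0$. Letting $s\downarrow 0$ along such $s$ and using the $\mW_2$-absolute continuity of the orbit yields $\rho_\infty=\eta_\infty$ a.e. The main technical obstacle is establishing the $\mW_2$-continuity of $\mF$ on the non-compact real line, which is handled via the uniform Markov tail estimate provided by the second-moment bound.
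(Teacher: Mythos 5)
Your argument is correct in its essentials, but it takes a genuinely different route from the paper's. The paper extracts a weak-$L^m$ limit $\tilde\gamma$ of $\gamma(t_n)$ using the uniform bound of Proposition \ref{prop:lpestimatest}, invokes the energy identity $-\ddt\mF(\gamma(t))=|\partial\mF|^2[\gamma(t)]$ from the gradient-flow characterisation, integrates over the unit intervals $(t_n,t_n+1)$, and combines the convergence of the non-increasing quantity $\mF(\gamma(t))$ with lower semicontinuity of the slope to get $|\partial\mF|[\tilde\gamma]=0$ directly; the explicit formula for $\partial^0\mF$ in Proposition \ref{prop:geodconvandsubdiff} then yields $\partial^0 N\star(\tilde\rho-\tilde\eta)=0$, hence $\tilde\rho=\tilde\eta$ upon differentiation. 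You instead run a LaSalle-type invariance argument: constancy of $\mF$ on $\omega(\gamma)$ via $\mW_2$-continuity of $\mF$, invariance of $\omega(\gamma)$ under the flow via the contraction \eqref{eq:contraction}, the energy inequality \eqref{eq:energyinequality} along the orbit issued from a limit point to kill the slope, and then $0$-geodesic convexity to upgrade zero slope to global minimality, which Proposition \ref{prop:funcpositive} characterises as $\rho=\eta$. What your route buys: it bypasses the lower-semicontinuity-of-the-slope step (which the paper applies somewhat informally to the time-shifted curves $\gamma(\cdot+t_n)$), it supplies an actual proof of the $\mW_2$-continuity of $\mF$ through the representation $\mF=\|F_\rho-F_\eta\|_{L^2}^2$ and the Markov tail estimate (the paper asserts this continuity elsewhere without justification), and it yields the slightly stronger conclusion $\mF(\gamma(t))\to 0$. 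What it costs: you must know that an $\omega$-limit point is absolutely continuous before the semigroup, the contraction estimate, and the energy inequality can be applied to it --- this follows from the uniform $L^m$ bound and weak lower semicontinuity of the $L^m$-norm, not from Theorem \ref{thm:regularity} as you cite, so that reference should be corrected. Finally, note that you take the metric in Definition \ref{om_lim} to be $\mW_2$, whereas the paper's proof works with weak $L^m$ convergence of $\gamma(t_n)$; under only the uniform second-moment bound of Lemma \ref{lem:momunifdounded} these topologies are not equivalent and may produce different $\omega$-limit sets, so it is worth stating explicitly which one your containment refers to.
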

\begin{proof}
Since $\gamma_0=(\rho_0,\eta_0)\in(\mptra\cap L^m(\R))^2$ from Proposition \ref{prop:lpestimatest} we know
$$
\|\gamma\|_{L^\infty(0,+\infty;L^m(\R))^2}\le \|\rho_0\|_{L^m(\R)}+\|\eta_0\|_{L^m(\R)},
$$
whence
$$
\|\gamma(t)\|_{L^m(\R)^2}\le\|\rho_0\|_{L^m(\R)}+\|\eta_0\|_{L^m(\R)},
$$
for a.e. $t>0$. Then we can consider an unbounded, increasing sequence $\{t_n\}_{n\in\mathbb{N}}$ such that $t_n\rightarrow+\infty$ and $\gamma(t_n)\rightharpoonup\tilde \gamma$ weakly in $L^m$, as $n\to+\infty$, where $\tilde\gamma=(\tilde\rho,\tilde\eta)$. According to \cite[Theorem 5.3]{AS07}, since $\gamma$ is a gradient flow solution to system \eqref{eq:full_system} we have
$$
	-\ddt \mF(\gamma(t)) = \|v_t(t)\|_{L^2}^2 = |\partial\mF|^2 [\gamma(t)],
$$
for a. e. $t>0$, whence
$$
	\ddt \mF(\gamma(t))=-|\partial\mF|^2[\gamma(t)]\le0.
$$
Now, if we integrate in a general time interval $(t_n,t_n+1)$, we have
\begin{equation*}
\begin{split}
\int_{t_n}^{t_n+1}\left(-\frac{d}{ds}\mF(\gamma(s))\right)\,\d s=\int_0^1\left(-\frac{d}{ds}\mF(\gamma(s+t_n))\right)\,\d s=\int_0^1|\partial\mF|^2[\gamma(s+t_n)]\,\d s,
\end{split}
\end{equation*}
which gives, passing to the $\liminf$ as $n\to+\infty$,
\begin{equation}\label{eq:slopelimit}
0=\liminf_{n\to+\infty}\int_0^1|\partial\mF|^2[\gamma(s+t_n)]\,\d s\ge\int_0^1|\partial\mF|^2[\tilde \gamma]\,\d s=|\partial\mF|^2[\tilde \gamma],
\end{equation}
by means of the lower semi-continuity of the slope already used in Subsection \ref{subsec:densities_case} (cf. \cite[Lemma 2.7]{CDFFLS}).
Hence, as a trivial consequence of Eq. \eqref{eq:slopelimit} we get $|\partial\mF|^2[\tilde \gamma]=0$, which, according to Proposition \ref{prop:geodconvandsubdiff}, implies that $\partial^0N\star(\tilde\rho-\tilde\eta)=0$ almost everywhere. Thus by differentiating we obtain the result.
\end{proof}

\begin{figure}[ht!]
	\centering
    \subfigure[Evolution.]{
    	\includegraphics[width=0.30\textwidth]{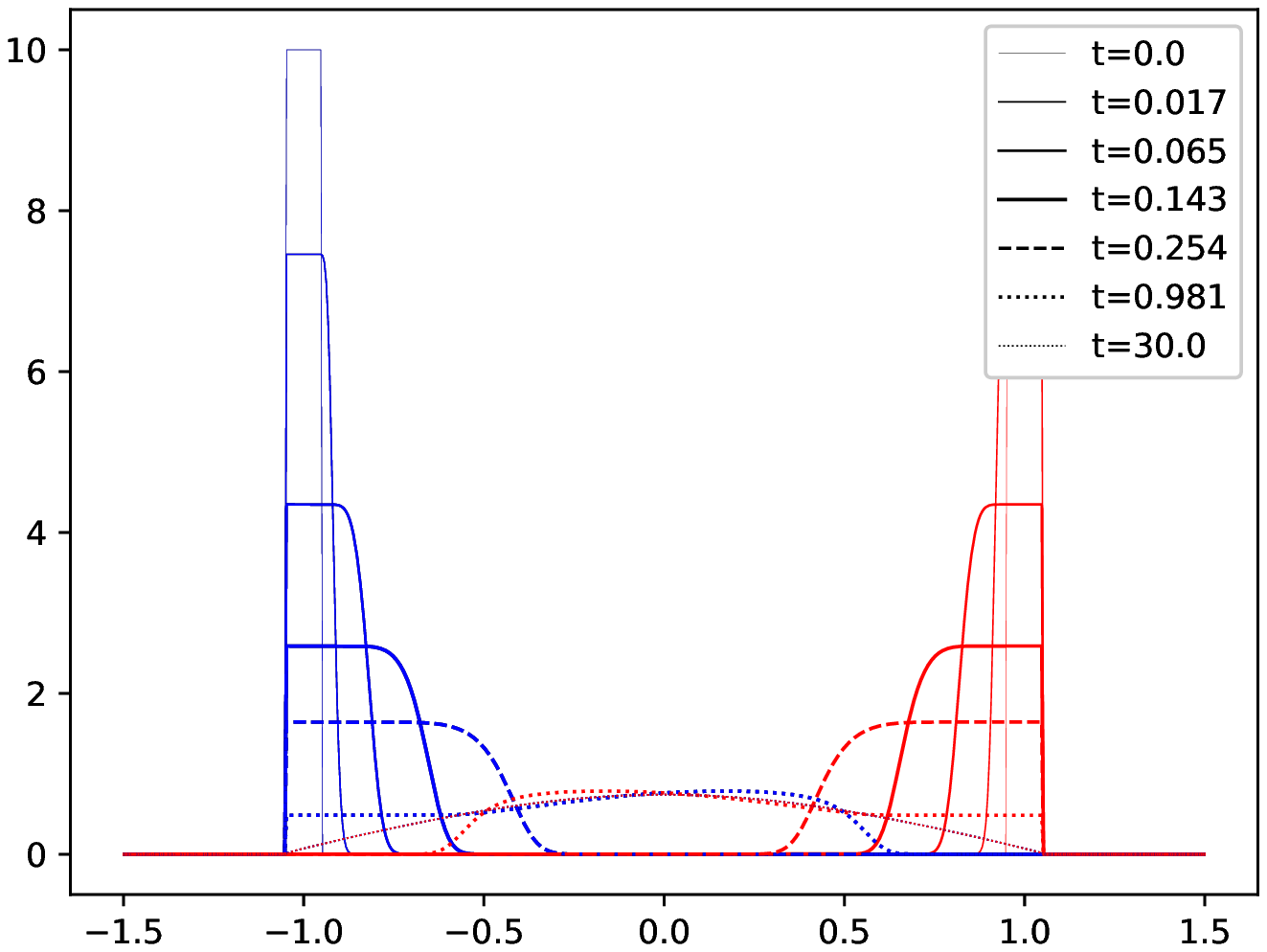}
    }
    \subfigure[Energy dissipation.]{
    	\includegraphics[width=0.30\textwidth]{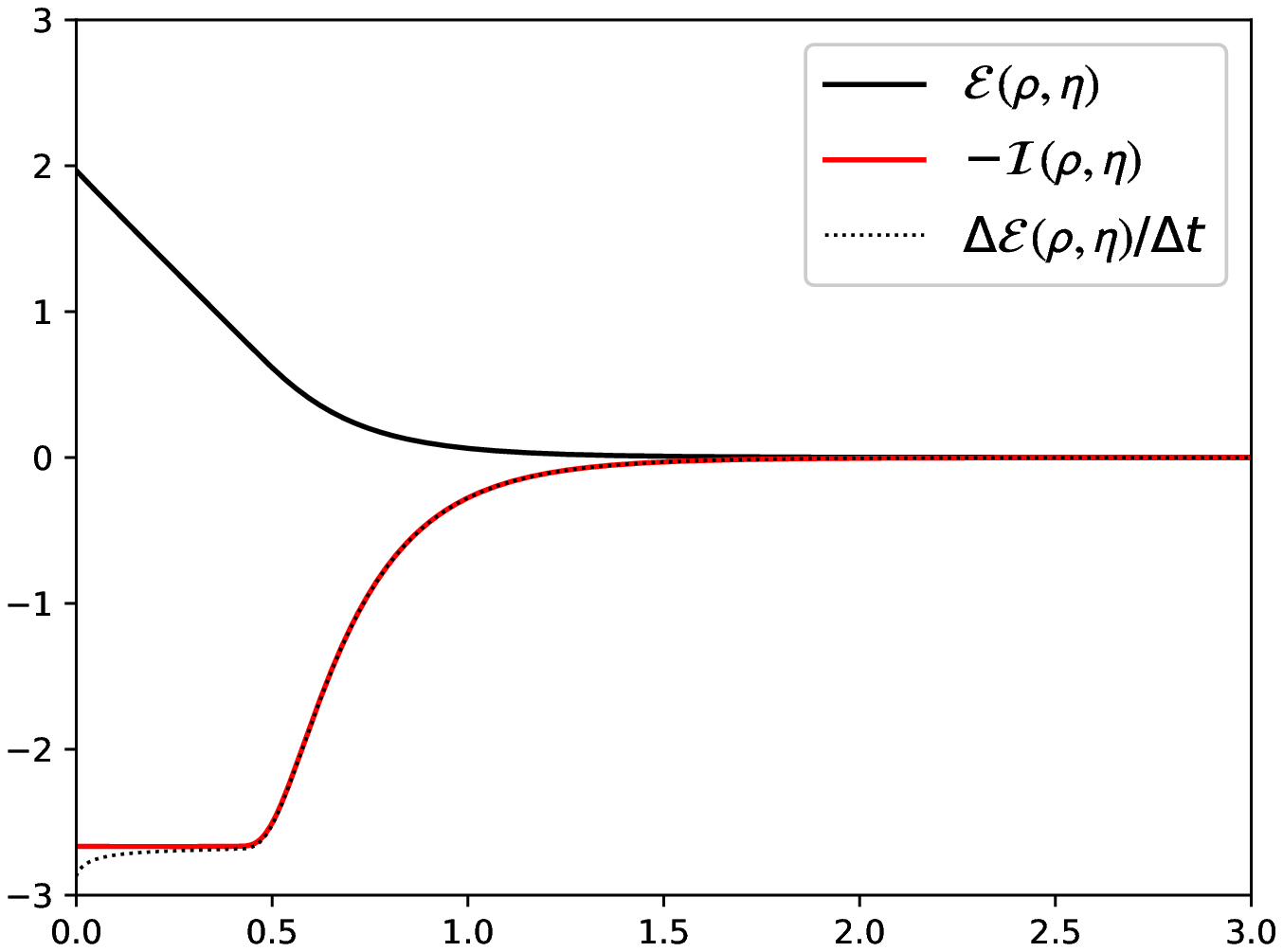}
    }
    \subfigure[Steady state.]{
    	\includegraphics[width=0.30\textwidth]{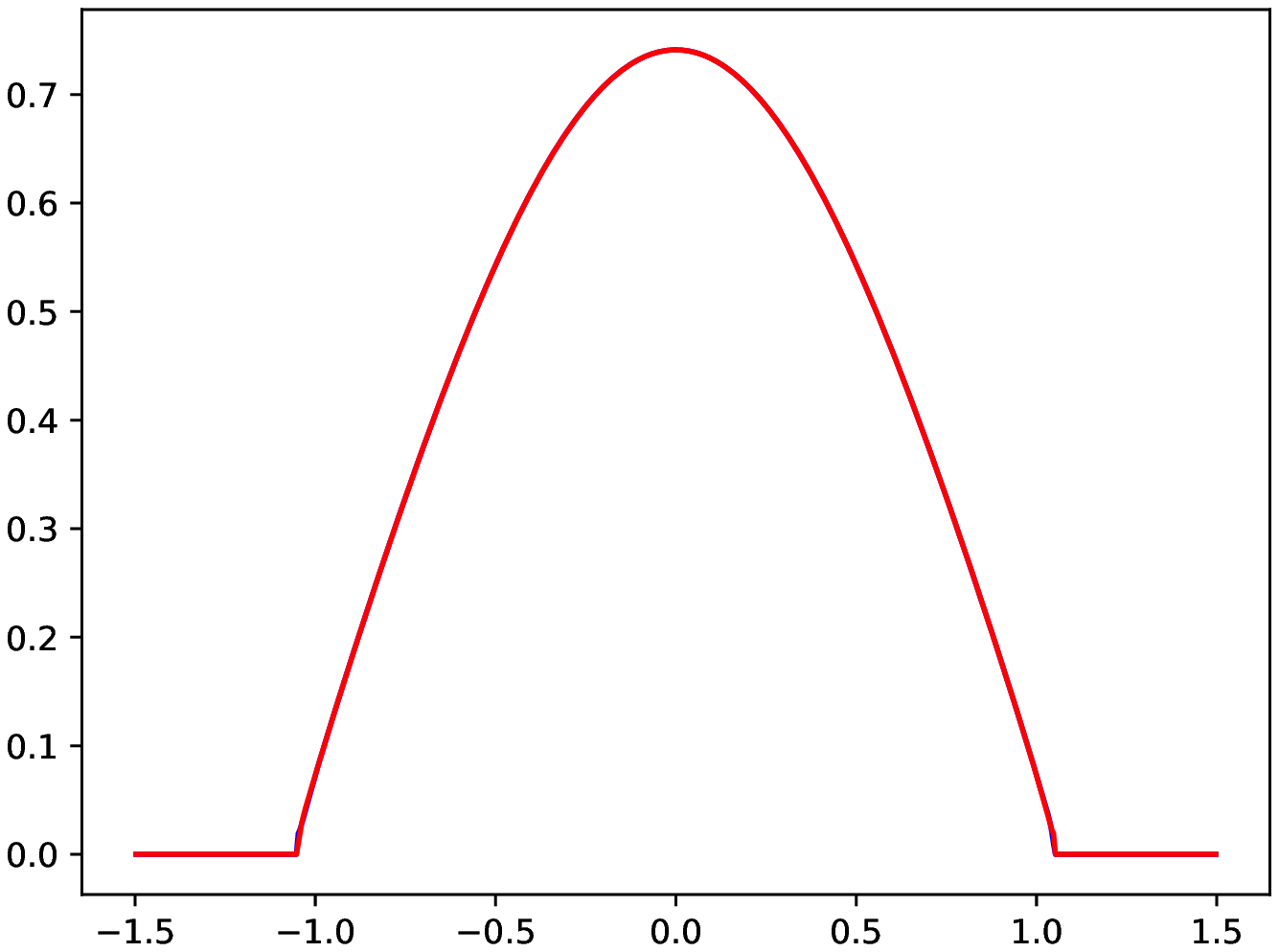}
    }
    \caption{This example has two separated indicator functions as initial data. In the left graph we see the evolution of system \eqref{eq:full_system} to the stationary state (right graph). In the middle we see the energy (black) of the solution and its dissipation (red). The dotted line is the numerical time derivative of the energy. It matches well with the analytically obtained dissipation. }
\end{figure}
\begin{figure}[ht!]
	\centering
    \subfigure[Evolution.]{
    	\includegraphics[width=0.30\textwidth]{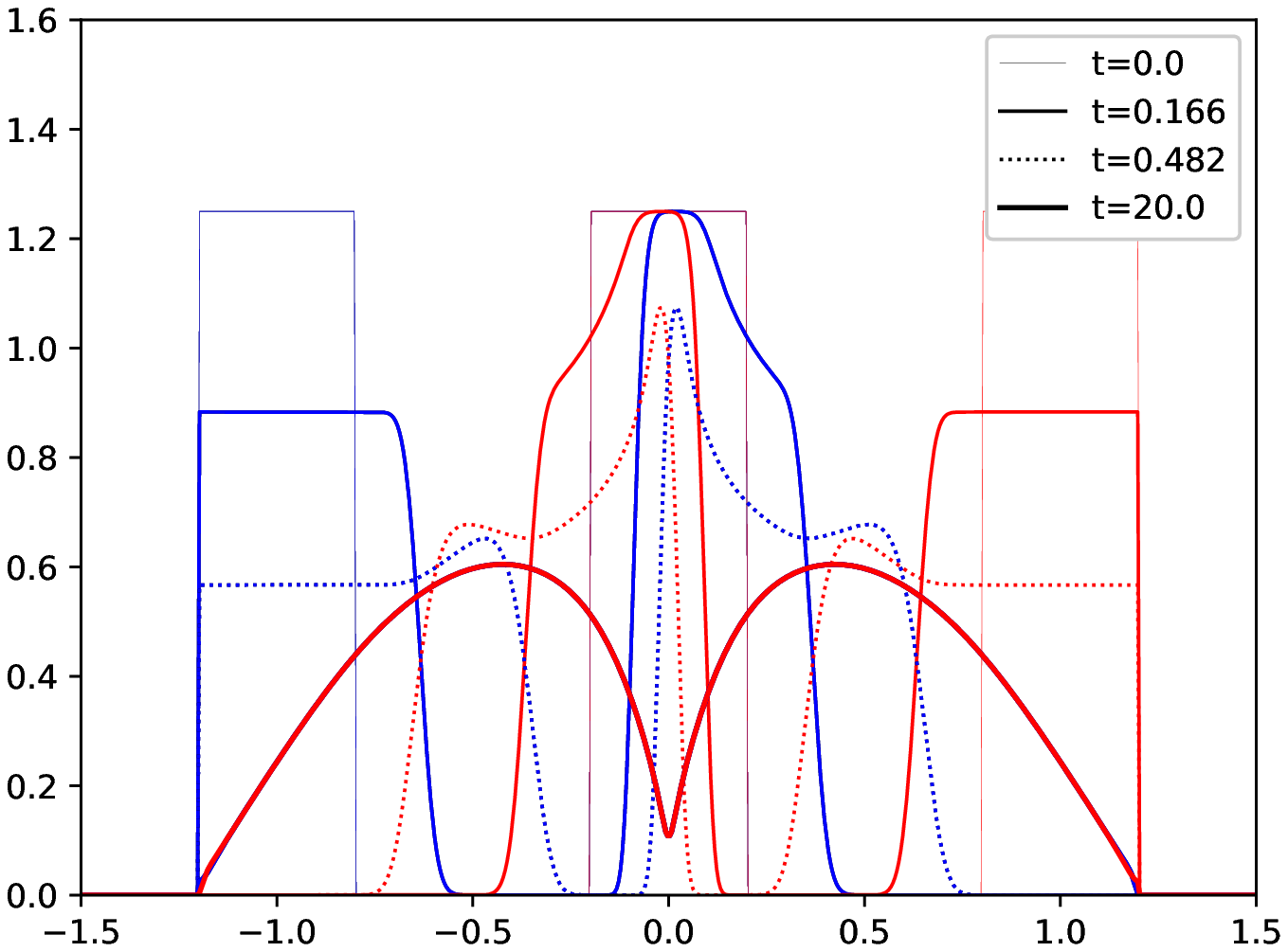}
    }
    \subfigure[Energy dissipation.]{
    	\includegraphics[width=0.30\textwidth]{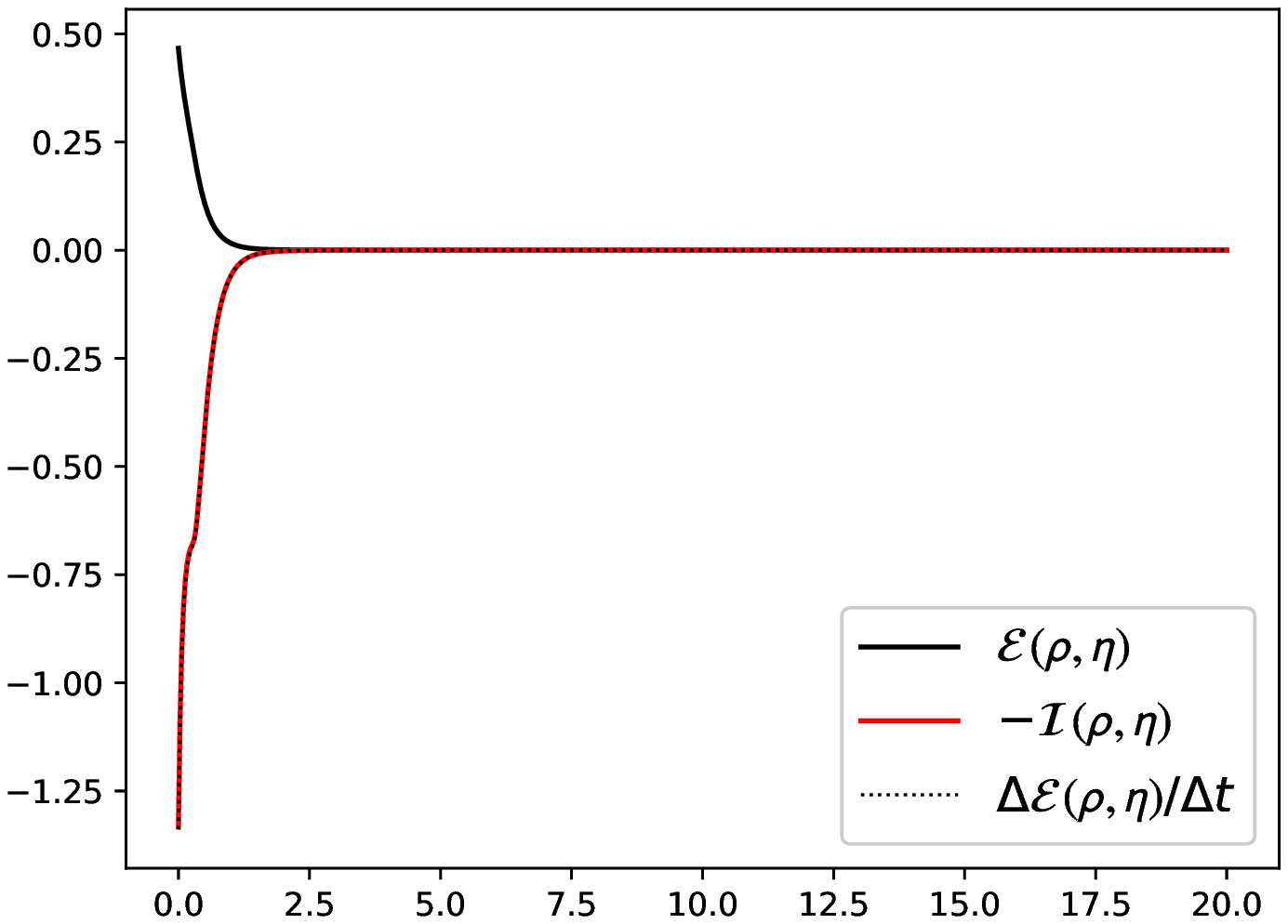}
    }
    \subfigure[Steady state.]{
    	\includegraphics[width=0.30\textwidth]{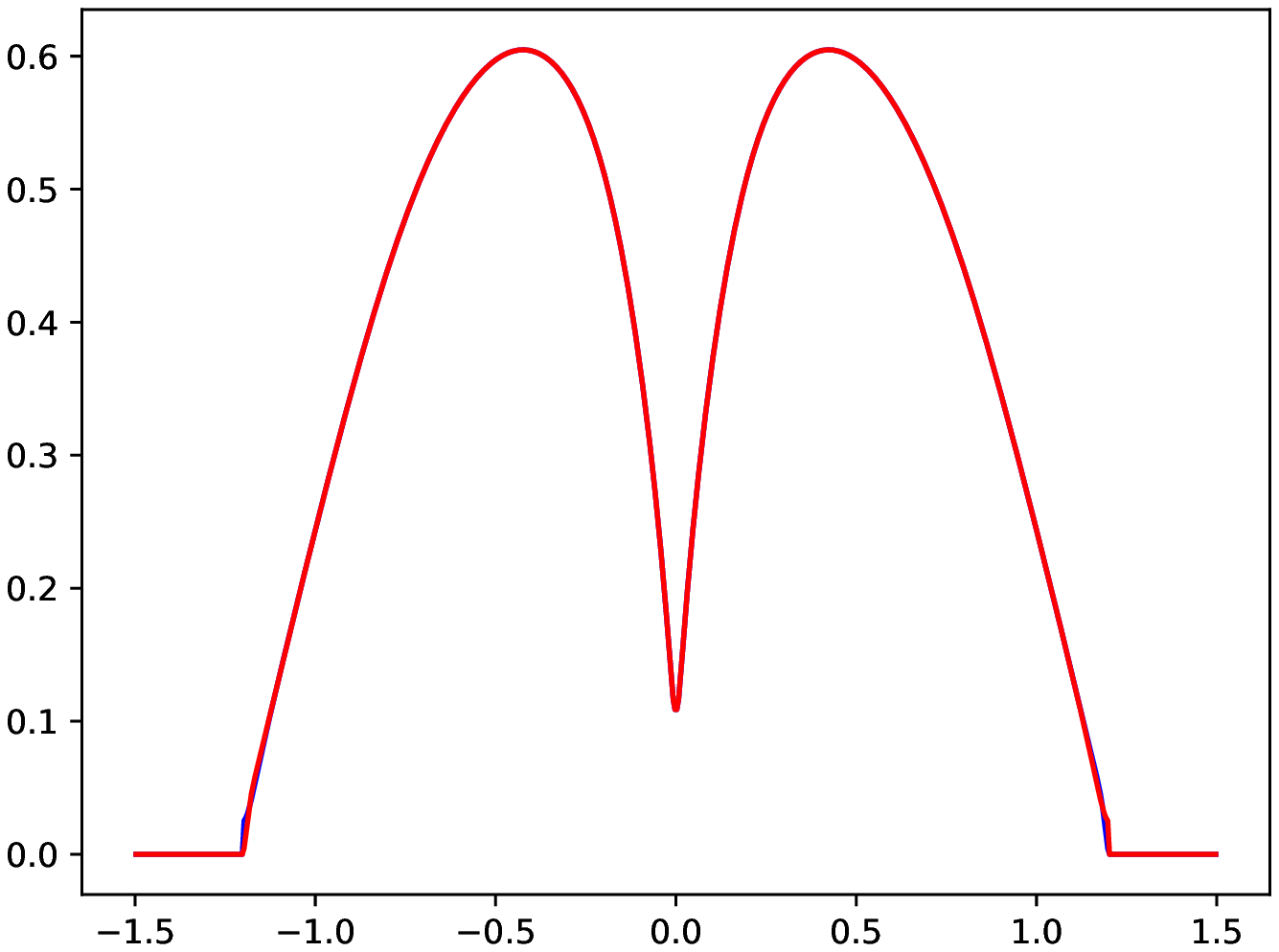}
    }
    \caption{We choose partially overlapping initial data and observe, as before that mixing occurs. The graph on the left displays the evolution of both densities at different time instances, while the rightmost graph displays the stationary state with identical densities. The graph in the middle shows the energy decay along the solution and the numerical dissipation and the analytical dissipation agree well.}
\end{figure}

\begin{remark}\label{rem:different-masses-steady-states}
We specify that in case $\rho$ and $\eta$ have masses $M_\rho\neq M_\eta$ the results in this section are no longer valid as the energy $\mathcal{F}$ is no longer bounded from below. In fact, let us assume for instance $\eta=\beta\rho$  which implies $M_\eta=\beta M_\rho$. Hence the energy becomes
\begin{align*}
\mathcal{F}(\rho,\eta)=-\frac{1}{2}M_\rho(\beta^2-1)^2\int_\R N\star\rho\,d\rho.
\end{align*}
By a simple rescaling argument the energy is shown to be unbounded from below.
\end{remark}

\section{Initial Data with Atomic Part, Non-Uniqueness \& link with Hyperbolic Systems}\label{sec:atomic}
In this section we study the evolution of measure valued initial data consisting of atomic parts. We will consider two peculiar examples: first we handle the case of two distinct Dirac deltas as initial condition, namely $\rho_0=\delta_{-1}$ and $\eta_0=\delta_1$, and then we will consider the case $ \rho_0=\delta_0$ and $\eta_0=m \delta_0 + (1-m)\delta_1$, for some $m\geq0$. In both cases we will provide two candidate weak solutions for system \eqref{eq:first} and we will show that only one of them can be selected in the spirit of Definition \ref{def:solutionmeasures}. As the notion of gradient flow in measure valued solution setting is essentially formulated in the pseudo-inverse formalism through Definition \ref{def:l2gradflow}, in this section we shall work directly with the pseudo-inverse variables.

We start by providing an explicit expression for elements in the sub-differential of $\mathfrak{F}$ in both the examples we shall consider.
\begin{proposition}\label{prop:overlapping-deltas-func+sub}
Let $m\in [0,1]$. Let $(X,Y)\in\mathcal{C}\times\mathcal{C}$ be such that $X=Y$ on $[0,m)$ and
\[\sup_{z\in [m,1]}X(z)<\inf_{z\in [m,1]}Y(z) .\] Then,
\begin{align*}
K(X,Y)=&\int_0^1[(2z-1)\mathbf{1}_{[0,m)}(z)+(2m-1)\mathbf{1}_{[m,1]}(z)]X(z)\,\d z\\&+\int_0^1[(2z-1)\mathbf{1}_{[0,m)}(z)+\mathbf{1}_{[m,1]}(z)]Y(z)\,\d z,
\end{align*}
and in particular the functional becomes
\begin{equation}\label{eq:overlapping-deltas-func}
\begin{split}
\mathfrak{F}(X,Y)=&\int_0^1[1-2z+(2z-1)\mathbf{1}_{[0,m)}(z)+(2m-1)\mathbf{1}_{[m,1]}(z)]X(z)\,\d z\\&+\int_0^1[1-2z+(2z-1)\mathbf{1}_{[0,m)}(z)+\mathbf{1}_{[m,1]}(z)]Y(z)\,\d z,
\end{split}
\end{equation}
In addition, let $X,Y\in L^2(0,1)$. Then, $\partial\bar{\mathfrak{F}}[(X,Y)]\neq\emptyset$ if and only if $(X,Y)\in\mathcal{C}\times\mathcal{C}$. In that case, if $(f_1,f_2)\in\partial\bar{\mathfrak{F}}[(X,Y)]$ then
\begin{equation*}
f_1(z)=\begin{cases}
        0,\quad & z\in[0,m),\\
        2(m-z),\quad & z\in[m,1],
		\end{cases}
 \qquad \text{and}\qquad \
f_2(z)=\begin{cases}
        0,\quad & z\in[0,m),\\
        2-2z,\quad & z\in[m,1],
		\end{cases}.
\end{equation*}
\end{proposition}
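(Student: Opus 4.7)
The plan splits into two parts: computing the explicit form of the energy first, then identifying the sub-differential. For the formula of $K(X,Y)=\iint_{[0,1]^2}|X(z)-Y(\xi)|\,\d z\,\d\xi$, I would decompose $[0,1]^2$ into the four rectangles $[0,m)^2$, $[0,m)\times[m,1]$, $[m,1]\times[0,m)$, and $[m,1]^2$. Combining the structural assumptions with the monotonicity forced by $X,Y\in\mathcal{C}$ gives $X(z)\le X(m^-)=Y(m^-)\le \inf_{[m,1]}Y\le Y(\xi)$ whenever $z\in[0,m)$ and $\xi\in[m,1]$, hence $|X(z)-Y(\xi)|=Y(\xi)-X(z)$ there, and symmetrically $|X(z)-Y(\xi)|=X(z)-Y(\xi)$ on $[m,1]\times[0,m)$; on $[m,1]^2$ the strict inequality forces $|X(z)-Y(\xi)|=Y(\xi)-X(z)$; on $[0,m)^2$ I substitute $Y=X$ and reduce the integrand to $|X(z)-X(\xi)|$. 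Performing the explicit integrations by Fubini, recognising the $[0,m)^2$ contribution via the argument underlying Proposition \ref{prop:selflinear} restricted to $[0,m)$, and collecting the four pieces produces the stated formula for $K(X,Y)$.

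For $\mathfrak{F}(X,Y)=S(X)+S(Y)+K(X,Y)$, I then invoke Proposition \ref{prop:selflinear} to write $S(X)+S(Y)=\int_0^1(1-2z)[X(z)+Y(z)]\,\d z$ and add the formula for $K$, collecting the coefficients of $X$ and $Y$: on $[0,m)$ both coefficients reduce to $(1-2z)+(2z-1)=0$, while on $[m,1]$ one obtains $2(m-z)$ for $X$ and $2-2z$ for $Y$, which is exactly \eqref{eq:overlapping-deltas-func}. The equivalence $\partial\bar{\mathfrak{F}}[(X,Y)]\neq\emptyset\Leftrightarrow (X,Y)\in\mathcal{C}\times\mathcal{C}$ is then immediate: the forward implication is the standard fact that the Fréchet sub-differential vanishes off the effective domain $\{\bar{\mathfrak{F}}<+\infty\}$, while the reverse direction follows from exhibiting the candidate $(f_1,f_2)$ of the statement as a sub-gradient. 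To verify this, I would use the convexity of $\bar{\mathfrak{F}}$ (Proposition \ref{prop:convexityforpseudo}) to reduce the task to the global sub-gradient inequality
\[
\bar{\mathfrak{F}}(R_1,R_2)\ge \mathfrak{F}(X,Y)+\int_0^1 f_1(R_1-X)\,\d z+\int_0^1 f_2(R_2-Y)\,\d z,
\]
for every $(R_1,R_2)\in L^2(0,1)^2$. The inequality is trivial when $(R_1,R_2)\notin\mathcal{C}^2$; for $(R_1,R_2)\in\mathcal{C}^2$ I would expand $\mathfrak{F}(R_1,R_2)-\int f_1 R_1-\int f_2 R_2$ using $S(R_i)=\int(1-2z)R_i$ from Proposition \ref{prop:selflinear} and bound $K(R_1,R_2)$ from below block by block via the pointwise inequality $|a-b|\ge s(a-b)$ for any $s\in[-1,1]$, making the choice $s=\mathrm{sign}(X(z)-Y(\xi))\in\{-1,0,1\}$ dictated by the reference configuration.

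The expected obstacle is precisely this last verification for directions that destroy the structural configuration: perturbations $(R_1,R_2)$ that violate the strict ordering $\sup_{[m,1]}X<\inf_{[m,1]}Y$ or that disturb the identification $X=Y$ on $[0,m)$ change the expression for $K$ qualitatively, so the sub-gradient inequality does not reduce to a trivial cancellation. The sign-based lower bound $|a-b|\ge s(a-b)$ is the right tool, but summing its four block-consequences and extracting $\int f_1 R_1+\int f_2 R_2$ requires careful bookkeeping that essentially mirrors the computation of Step 1 and crucially uses $R_1,R_2\in\mathcal{C}$ to keep sign-mismatched terms non-negative. Once this is settled, the uniqueness clause in the statement — that any sub-gradient must coincide with $(f_1,f_2)$ — follows by testing the sub-gradient inequality against admissible directions $(\phi_1,\phi_2)$ for which $(X+\varepsilon\phi_1,Y+\varepsilon\phi_2)$ remains within the structural class for small $\varepsilon>0$; on such a two-sided family $\mathfrak{F}$ is exactly affine with gradient $(f_1,f_2)$ by \eqref{eq:overlapping-deltas-func}, which pins down the sub-gradient almost everywhere.
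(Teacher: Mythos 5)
Your computation of $K(X,Y)$ and of \eqref{eq:overlapping-deltas-func} follows the paper's route exactly: the same four-block decomposition of $[0,1]^2$, the same resolution of the absolute values from the structural hypotheses combined with monotonicity, the same treatment of the block $[0,m)^2$ via the argument behind Proposition \ref{prop:selflinear}, and the same final assembly with $S(X)+S(Y)=\int_0^1(1-2z)[X(z)+Y(z)]\,\d z$. Where you genuinely depart from the paper is in the verification that $(f_1,f_2)$ is a sub-gradient. The paper's proof writes $\mathfrak{F}(R_1,R_2)-\mathfrak{F}(X,Y)$ as an exact linear pairing against $(f_1,f_2)$ for \emph{every} $(R_1,R_2)\in\mathcal{C}\times\mathcal{C}$, i.e.\ it tacitly applies formula \eqref{eq:overlapping-deltas-func} to competitors that need not satisfy $R_1=R_2$ on $[0,m)$ or the ordering on $[m,1]$; as you correctly observe, the formula is only valid under those structural hypotheses, so for general competitors the equality must be relaxed to the sub-gradient inequality. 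Your remedy --- the pointwise convexity bound $|a-b|\ge |a_0-b_0|+s\,(a-a_0)-s\,(b-b_0)$ with $s=\sign(X(z)-Y(\xi))$, integrated over the four blocks --- is the right one, and the bookkeeping you defer does close: integrating $s(z,\xi)$ in $\xi$ (respectively $-s(z,\xi)$ in $z$) reproduces precisely the coefficients of $X$ (respectively $Y$) appearing in $K$, by the very computation of your first step; the cone membership of $(R_1,R_2)$ is needed only to keep the self-interaction part exactly linear via Proposition \ref{prop:selflinear}, not for the cross term. On this half your argument is more robust than the printed one.

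The one step I would not sign off on as sketched is the final single-valuedness clause. Testing against two-sided directions $(\phi_1,\phi_2)$ such that $(X\pm\varepsilon\phi_1,\,Y\pm\varepsilon\phi_2)$ stays in the structural class is too weak exactly in the case of interest: when $X$ is constant on an interval (pseudo-inverses of atomic measures), requiring both $X+\varepsilon\phi_1$ and $X-\varepsilon\phi_1$ to be non-decreasing forces $\phi_1$ to be constant there, so such variations only determine averages of $f_1$ over the flat intervals, not its values almost everywhere. To conclude that every element of $\partial\bar{\mathfrak{F}}[(X,Y)]$ coincides with $(f_1,f_2)$ one must also exploit one-sided admissible variations together with the explicit affine formula (equivalently, control the normal cone to $\mathcal{C}\times\mathcal{C}$ at $(X,Y)$). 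To be fair, the paper's own proof is no more explicit on this point --- it derives the identity for structural competitors and stops --- but since the non-uniqueness examples in Section \ref{sec:atomic} rely on the sub-differential being single-valued, this is the place where additional care is genuinely needed.
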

\begin{proof}
Letting $(X,Y)\in\mathcal{C}\times\mathcal{C}$ as in the statement we have $\mathcal{I}_{\mathcal{C}}(X)=\mathcal{I}_{\mathcal{C}}(Y)=0$ and
\begin{equation*}
\begin{split}
K(X,Y)=&\int_0^1\int_0^1|Y(\xi)-X(z)|\,\d z\,d\xi\\=&\int_0^m\int_0^m|Y(\xi)-X(z)|\,\d z\,\d\xi+\int_0^m\int_m^1|Y(\xi)-X(z)|\,\d z\,\d \xi\\&+\int_m^1\int_0^m|Y(\xi)-X(z)|\,\d z\,\d\xi+\int_m^1\int_m^1|Y(\xi)-X(z)|\,\d z\,\d \xi\\=:&I_1+I_2+I_3+I_4.
\end{split}
\end{equation*}
First, let us compute $I_1$ taking into account that $X=Y$ on $[0,m)$:
\begin{equation*}
\begin{split}
I_1=&\int_0^m\int_0^m|Y(\xi)-X(z)|\,\d z\,\d\xi\\=&\int_0^m\int_0^m|X(\xi)-X(z)|\,\d z\,\d\xi\\=&\int\int_{[0,m]^2\cap \{X(\xi)\ge X(z)\}}(X(\xi)-X(z))\,\d z\,\d\xi-\int\int_{[0,m]^2\cap\{X(\xi)\le X(z)\}}(X(\xi)-X(z))\,\d z\,\d\xi\\=&2\int\int_{\{X(\xi)\ge X(z)\}}(X(\xi)-X(z))\,\d z\,\d \xi.
\end{split}
\end{equation*}
Since $X$ is non-decreasing, we have
$$
\{X(\xi)\ge X(z)\}=\{\xi\ge z\}\cup\{\xi\le z\le S(\xi)\},\quad\textit{where}\quad S(\xi)=\sup\{z\in[0,1]\,|\, X(z)=X(\xi)\},
$$
and therefore we get
\begin{equation*}
\begin{split}
I_1=&2\int\int_{[0,m]^2\cap\{X(\xi)\ge X(z)\}}(X(\xi)-X(z))\,\d z\,\d\xi\\=&2\int\int_{[0,m]^2\cap\{\xi\ge z\}}(X(\xi)-X(z))\,\d z\,\d \xi\\=&\int_0^m(2z-m)(X(z)+Y(z))\,\d z.
\end{split}
\end{equation*}
Concerning the other integrals, we easily obtain
\begin{equation*}
\begin{split}
I_2=&m\int_m^1Y(z)\,\d z-(1-m)\int_0^mX(z)\,\d z,\\
I_3=&m\int_m^1X(z)\,\d z-(1-m)\int_0^mY(z)\,\d z,\\
I_4=&(1-m)\int_m^1(Y(z)-X(z))\,\d z.
\end{split}
\end{equation*}
Summing up all the contributions we have
\begin{align*}
K(X,Y)=&\int_0^1[(2z-1)\mathbf{1}_{[0,m)}(z)+(2m-1)\mathbf{1}_{[m,1]}(z)]X(z)\,\d z\\&+\int_0^1[(2z-1)\mathbf{1}_{[0,m)}(z)+\mathbf{1}_{[m,1]}(z)]Y(z)\,\d z.
\end{align*}
Then \eqref{eq:overlapping-deltas-func} follows as a direct consequence. Now, let us characterise the sub-differential of $\bar{\mathfrak{F}}$. Assume without loss of generality that $X\not\in\mathcal{C}$ and $Y\in\mathcal{C}$, which implies $\mathcal{I}_{\mathcal{C}}(X)=+\infty$ and $\mathcal{I}_{\mathcal{C}}(Y)=0$. If $(\tilde{X_1},\tilde{X_2})\in\partial\bar{\mathfrak{F}}[(X,Y)]$ we would have
\begin{equation}
	\begin{split}
		&\mathfrak{F}(R_1,R_2) + \mathcal{I}_{\mathcal{C}}(R_1)+\mathcal{I}_{\mathcal{C}} (R_2) - \mathfrak{F}(X,Y)\\
        &-\int_0^1\tilde{X_1}(z)(R_1(z)-X(z))+\tilde{X_2}(z)(R_2(z)-Y(z))\, \d z + o(\|(X,Y)-(R_1,R_2)\|) \ge\mathcal{I}_{\mathcal{C}}(X),
	\end{split}
\end{equation}
for all $(R_1,R_2)\in L^2(0,1)^2$ with $\|(X,Y)-(R_1,R_2)\|\rightarrow 0$, and in particular for all $(R_1,R_2)\in\mathcal{C}\times\mathcal{C}$. In the latter case we obviously have a contradiction because the left-hand side is finite while the right-hand side is infinite and therefore $\partial\bar{\mathfrak{F}}[(X,Y)]=\emptyset$. Let $(X,Y)\in\mathcal{C}\times\mathcal{C}$ and $(R_1,R_2)\in L^2(0,1)^2$. Now we have to consider two cases:
\begin{enumerate}
\item $(R_1,R_2)\not\in\mathcal{C}\times\mathcal{C}$;
\item $(R_1,R_2)\in\mathcal{C}\times\mathcal{C}$.
\end{enumerate}

In the first case the definition of sub-differential is trivially satisfied, whereas in the second one we get
\begin{equation*}
\begin{split}
\mathfrak{F}(R_1,R_2)-\mathfrak{F}(X,Y)=&\int_0^1[1-2z+(2z-1)\mathbf{1}_{[0,m)}(z)+(2m-1)\mathbf{1}_{[m,1]}(z)]\left[R_1(z)-X(z)\right]\,\d z\\&+\int_0^1[1-2z+(2z-1)\mathbf{1}_{[0,m)}(z)+\mathbf{1}_{[m,1]}(z)]\left[R_2(z)-Y(z)\right]\,\d z,
\end{split}
\end{equation*}
which concludes the proof.
\end{proof}
We remark in particular that under the assumptions in the previous proposition the sub-differential of $\mathfrak{F}$ is \emph{single-valued} on $\mathcal{C}\times \mathcal{C}$. Note that the case $\sup X<\inf Y$ ($m=0$) is included in the previous proposition, and  the functional becomes
\begin{equation}\label{eq:functionalpartcase}
\mathfrak{F}(X,Y)=2\int_0^1(1-z)Y(z)\,\d z-2\int_0^1zX(z)\,\d z.
\end{equation}

\subsection{The case of two distinct deltas as initial condition} Let us consider the first example, with
\begin{equation}\label{eq:two_deltas}
\rho_0=\delta_{-1}\, \qquad\mbox{ and }\qquad \eta_0=\delta_1.
\end{equation}
At the level of weak (measure) solutions, both
 \begin{align}
	\label{eq:two_diracs_explicit_soln}
	\rho(t,x) = \frac{1}{2t}\mathbf{1}_{[-1,-1+2t]}(x), \qquad \mbox{and} \qquad \eta(t,x) = \frac{1}{2t}\mathbf{1}_{[1-2t,1]},
\end{align}
and $(\tilde \rho, \tilde\eta)$ given by
\begin{align}
	\tilde \rho(t,x)= \delta_{t-1}, \quad\mbox{and}\quad \tilde \eta(t,x)=\delta_{1-t},
\end{align}
	satisfy system \eqref{eq:full_system} in the weak sense on $[0,1/2]\times\R$ and equal \eqref{eq:two_deltas} at $t=0$, see considerations in Subsection \ref{sssec:ws1}. The corresponding pseudo-inverse functions $(X,Y)$, given by
 \begin{align}\label{eq:ps-inv-good}
	X(t,z) =-1+2zt,\qquad\mbox{and}\qquad Y(t,z) = 1+t(2z-2),
\end{align}
as well as $(\tilde X, \tilde Y)$, given by
	\begin{align}\label{eq:ps-inv-bad}
		\tilde X(t,z)=-1+t,\qquad\mbox{and}\qquad \tilde Y(t,z)=1-t,
	\end{align}
satisfy system \eqref{eq:system-pseudo-inverse} in the strong sense on $[0,1/2]\times[0,1]$, see Subsection \ref{sssec:ss1}. Working in the context of pseudo-inverses we can show, in Subsection \ref{sssec:csd1} that actually only the time derivative of the pseudo-inverse in \eqref{eq:ps-inv-good} is an element of the sub-differential.

\subsubsection{Weak measure solutions}\label{sssec:ws1}
In order to prove that $(\rho,\eta)$ in \eqref{eq:two_diracs_explicit_soln} is a weak solution of the system we begin by simplifying the velocity term. To this end we compute the convolution with the Heaviside function
\begin{align*}
	(N'\star \rho)(t,x) &= \int_{-\infty}^{+\infty} \sign(x-y)\rho(t,y)\d y\\
    &= \left\{
    \begin{array}{cl}
    	-1, & \mbox{if } x \leq -1,\\
        \dfrac{x+1-t}{t}, & \mbox{if } x\in(-1,-1+2t],\\
        1, &\mbox{else.}
    \end{array}
    \right.
\end{align*}
Similarly, for the convolution with the second species we obtain
\begin{align*}
	(N'\star \eta)(t,x) &= \int_{-\infty}^{+\infty} \sign(x-y)\eta(t,y)\d y\\
    &= \left\{
    \begin{array}{cl}
    	-1, & \mbox{if } x \leq 1-2t,\\
        \dfrac{x-1+t}{t}, & \mbox{if } x\in(1-2t,1],\\
        1, &\mbox{else.}
    \end{array}
    \right.
\end{align*}
We claim that $\rho,\eta$ as defined above are weak solutions to system \eqref{eq:full_system}. Here we only check that $\rho$ is a weak solution as the computation for the second species is done in an analogous way. Now, let $\phi\in C_c^\infty$ and consider the weak formulation
\begin{align*}
	&\int_0^{1/2}\int_{-\infty}^\infty \rho \bigg[\phi_t -N'\star(\eta-\rho)\phi_x\bigg]\d x\d t =: I_t + I_x,
\end{align*}
where
\begin{align*}
	I_t := \int_0^{1/2}\int_{-\infty}^\infty \rho \phi_t(t,x) \d x \d t,
\end{align*}
and
\begin{align*}
	I_x := - \int_0^{1/2}\int_{-\infty}^\infty \rho N'\star(\eta-\rho)\phi_x(t,x)\d x\d t.
\end{align*}
Let us begin by simplifying the time related term. By changing the order of integration it is easy to see that
\begin{align*}
 	I_t &= \int_0^{1/2}\int_{-\infty}^\infty \rho \phi_t(t,x) \d x \d t = \int_{0}^{1/2} \int_{-1}^{-1+2t} \frac{1}{2t} \phi_t(t,x) \d t\d x\\
    &= \int_{-1}^0 \int_{\frac{x+1}{2}}^{1/2} \frac{1}{2t^2} \phi(t,x) \d t\d x + \int_{-1}^0 \left[\frac{1}{2t}\phi(t,x)\right]_{t=\frac{x+1}{2}}^{1/2}\d x,
\end{align*}
by an integration by parts. Hence, switching the order of integration another time and simplifying the boundary term we obtain
\begin{align*}
	I_t =\int_{0}^{1/2}\int_{-1}^{-1+2t}\frac{1}{2t^2}\phi(t,x) \d x \d t - \int_{-1}^0 \frac{1}{x+1}\phi\left(\frac{x+1}{2},x\right)\d x.
\end{align*}
A change of variables $x+1 = 2t$ finally yields
\begin{align}
	I_t = \int_{0}^{1/2}\int_{-1}^{-1+2t}\frac{1}{2t^2}\phi(t,x) \d x \d t -\int_{0}^{1/2} \frac{1}{t}\phi\left(t,2t-1\right)\d t.
\end{align}

Next we shall address the term space related term. We observe
\begin{align*}
	I_x &= - \int_0^{1/2}\int_{-1}^{-1+2t} \frac{1}{2t} \left[-1-\frac{x+1-t}{t}\right]\phi_x(t,x)\d x\d t\\
    &= \int_0^{1/2}\int_{-1}^{-1+2t}  \frac{x+1}{2t^2}\phi_x(t,x)\d x\d t.
\end{align*}
An integration by parts yields
\begin{align*}
	I_x &=\int_0^{1/2}\int_{-1}^{-1+2t}  \frac{x+1}{2t^2}\phi_x(t,x)\d x\d t\\
    &= -\int_0^{1/2}\int_{-1}^{-1+2t} \frac{1}{2t^2} \phi(t,x)\d x\d t + \int_0^{1/2}\left[\frac{x+1}{2t^2}\phi(t,x)\right]_{x=-1}^{-1+2t}\d t\\
    &= -\int_0^{1/2}\int_{-1}^{-1+2t} \frac{1}{2t^2} \phi(t,x)\d x\d t + \int_0^{1/2}\frac{1}{t}\phi(t,2t-1)\d t.
\end{align*}
Upon adding up $I_t$ and $I_x$ we observe
\begin{align*}
	I_t + I_x = 0,
\end{align*}
i.e. $\rho$ is a weak solution of the first equation in system \eqref{eq:full_system}. Similarly it can be shown that $\eta$ is a weak solution to the second equation in \eqref{eq:full_system}.

Next we show that $(\tilde \rho, \tilde \eta)$ is also a weak solution. As before we compute the terms including the convolutions first. It is easy to check that
$$
(N'\star\tilde{\rho})(x)=\sign(x-t+1),\qquad\text{and}\qquad(N'\star\tilde{\eta})(x)=\sign(x+t-1),
$$
for all $x\in\R$, thus the velocity is given by $u:=N'\star(\tilde{\eta}-\tilde{\rho})=\sign(x+t-1)-\sign(x-t+1)$. Let us consider a test function $\phi\in C_c^\infty$ in order to check the weak formulation as follows:
\begin{align*}
\int_0^1\int_\R\tilde{\rho}[\phi_t-u\phi_x]\,\d x\,\d t&=\int_0^1\phi_t(t,t-1)-u(t,t-1)\phi_x(t,t-1)\,\d t\\&=\int_0^1\phi_t(t,t-1)+\phi_x(t,t-1)\d t\\
	&=\int_0^1\frac{\d}{\d t}\left(\phi(t,t-1)\right)\,\d t\\&=0.
\end{align*}
Arguing similarly for $\tilde{\eta}$ we have $(\tilde{\rho},\tilde{\eta})$ is a weak solution to system \eqref{eq:full_system} with initial data $\rho_0=\delta_{-1}$ and $\eta_0=\delta_{1}$ as well.

\subsubsection{Strong solutions in the pseudo-inverse formalism}\label{sssec:ss1}
Next, let us show that $(X,Y)$ defined in \eqref{eq:ps-inv-good} is the solution to system \eqref{eq:system-pseudo-inverse} in the strong sense. Using that $t<1/2$ there holds
\begin{align*}
	&\int_0^1 \sign(X(t,z)-X(t,\xi)) \d \xi -\int_0^1 \sign(X(t,z)-Y(t,\xi)\d \xi\\
	&=\int_0^1\sign(2t(z-\xi))\d\xi -\int_0^1 \sign(-2+2t(z-\xi+1)) \d\xi\\
	&=\int_0^z \d\xi -\int_z^1\d\xi +1 \\
	&= 2z\\
	&= \frac{\partial}{\partial t} X(t,z),
\end{align*}
and
\begin{align*}
	&\int_0^1 \sign(Y(t,z)-Y(t,\xi)) \d \xi -\int_0^1 \sign(Y(t,z)-X(t,\xi)\d \xi\\
	&=\int_0^1\sign(2t(z - \xi))\d\xi -\int_0^1 \sign(2+2t(z-\xi-1)) \d\xi\\
	&=\int_0^z\d\xi -\int_z^1\d\xi -1 \\
	&= 2z-2\\
	&= \frac{\partial}{\partial t} Y(t,z).
\end{align*}
As for the second pair of pseudo–inverses \eqref{eq:ps-inv-bad} we observe
\begin{align*}
& \int_0^1 \sign(\tilde X(t,z)- \tilde X(t,\xi)) \d \xi -\int_0^1 \sign(\tilde X(t,z)-\tilde Y(t,\xi)\d \xi\\
	&=\int_0^1 \sign(0)\d \xi -\int_0^1 \sign(2(t-1))\d \xi\\
	&=1\\
	&=\frac{\partial}{\partial t} \tilde X(t,z),
\end{align*}
and similarly the equation for $\tilde Y$ is satisfied.

\subsubsection{Characterisation of the sub-differential}\label{sssec:csd1} We need to check the differential inclusion. According to Proposition \ref{prop:overlapping-deltas-func+sub} we have
$$
\frac{\partial}{\partial t}\begin{pmatrix}
X(t,z)\\
Y(t,z)
\end{pmatrix}=\begin{pmatrix}
2z\\
2z-2
\end{pmatrix}\in-\partial\bar{\mathfrak{F}}\big[\big(X(t,z),Y(t,z)\big)\big],
$$
as we claimed. However, there holds $\frac{\partial}{\partial t} \tilde X(t,z)=1\neq2z$, which shows the pair $(\tilde X, \tilde Y)$ defined in \eqref{eq:ps-inv-bad} is not a solution to system \eqref{eq:system-pseudo-inverse} in the sense of Definition \ref{def:l2gradflow} for the given initial data.

\begin{remark}\label{rem:example1}
In this example, both species are initially concentrated at one point, but there is no overlap between them. Therefore, only the intraspecific energies are affected at ``singular points'', i.e. at points in which the convolution kernel is not smooth. In fact, the interspecific energy is not effected by the Lipschitz point at the origin. In this sense, one expects the qualitative behaviour of this system to be essentially the same as in the one species case, see \cite{BCDFP}. More precisely, both species get immediately absolutely continuous w.r.t. to the Lebesgue measure. The attractive cross-interaction energy makes the two patches get closer to each other until they eventually merge. In conclusion, the existence of two distinct measure solutions in this example is not a distinctive feature of the two species system, but rather an extension of a property holding in the one species case.
\end{remark}

\subsection{The case of two overlapping deltas as initial condition}
 Let $0\leq m < 1$ be given and initialise system \eqref{eq:full_system} as follows
 \begin{equation}\label{eq:two_over}
  \rho_0=\delta_0,\quad\mbox{and}\quad \eta_0=m \delta_0 + (1-m)\delta_1.
 \end{equation}
 Then the pair $(\rho,\eta)$ given by
 \begin{align*}
  \rho(t,x)= m\delta_0 + \frac{1}{2t}\mathbf{1}_{[0,2(1-m)t]}(x),\quad\mbox\quad \eta(t,x)=m\delta_0 + \frac{1}{2t}\mathbf{1}_{[1-2(1-m)t,1]}(x),
 \end{align*}
 is a weak solution to system \eqref{eq:full_system} on $[0,T)\times[0,1]$, with $T:=\frac{1}{4(1-m)}$, as well as the pair $(\tilde \rho,\tilde \eta)$ given by
 \begin{align*}
  \tilde \rho(t,x)=m\delta_0 + (1-m)\delta_{(1-m)t}, \quad\mbox{and}\quad \tilde \eta(t,x)=m\delta_0 + (1-m)\delta_{1-(1-m)t}.
 \end{align*}
Moreover, for $t\in[0,T)$, the associated pseudo-inverse functions
\begin{align*}
	X(t,z) = 2t (z-m) \mathbf{1}_{[m,1]}(z), \qquad Y(t,z) = \big(1-2t (1-z)\big) \mathbf{1}_{[m,1]}(z),
\end{align*}
and
\begin{align*}
 \tilde X(t,z) = (1-m)t \mathbf{1}_{[m,1]}(z), \qquad \tilde Y(t,z) = \big(1-(1-m)t\big) \mathbf{1}_{[m,1]}(z).
\end{align*}
are both strong strong solutions to system \eqref{eq:system-pseudo-inverse}, but only (X,Y) is the gradient flow solution in the sense of Definition \ref{def:l2gradflow}.
\subsubsection{Weak measure solutions}\label{sssec:ws2} Let us start by verifying that $(\rho,\eta)$ is a weak solution to system \eqref{eq:full_system} on $[0,T)\times[0,1]$. Next we compute the vector field for $\rho$ on $x\in[0,1/2]$. For the self-interactions we get
\begin{align*}
	(N'\star\rho)(t,x) &= \int_\R \sign(x-y)\rho(t,y)\d y \\
    &= m\, \sign(x) + \frac{1}{2t} \int_0^{2(1-m)t}\sign(x-y)\d y\\
    &=m\, \sign(x) + \frac{2x-2(1-m)t}{2t}\\
    &=m\, \sign(x) + \frac{x}{t}  +m-1,
\end{align*}
whereas, for the cross-interactions, we get
\begin{align*}
	(N'\star\eta)(t,x) &= m\,\sign(x) - \int_{1-2(1-m)t}^1 \frac{1}{2t}\d y \\
    &=m\,\sign(x) + m - 1.
\end{align*}
Hence, the velocity on $[0,1/2]$ is given by
\begin{align*}
	u = N'\star(\eta-\rho) = -\frac{x}{t}.
\end{align*}
We shall now verify that $\rho$ and $\eta$  are weak solutions. It is easy to see that
\begin{align*}
	I&=\int_0^T\int_\R \rho(t,x)\bigg[\phi_t(t,x) - u \phi_x(t,x)\bigg]\d x \d t\\
    &=m\int_0^T \phi_t(t,0) - u(t,0)\phi_x(t,0)\d t\\
    &\quad + \underbrace{\int_0^T\int_0^{2(1-m)t} \frac{1}{2t} \phi_t(t,x)\d x \d t}_{=:I_1} +\underbrace{ \int_0^T\int_0^{2(1-m)t}\frac{x}{2t^2}\phi_x(t,x)\d x\d t}_{=:I_2}.
\end{align*}
We will treat each term individually. Note that $u(t,0)=0$ for all $t>0$. Together with the fact that $\phi$ is compactly supported and the application of the fundamental theorem the first term vanishes and it remains to treat the terms $I_1, I_2$. Using Fubini's theorem and an integration by parts we may write
\begin{align*}
	I_1 &=\int_0^T\int_0^{2(1-m)t} \frac{1}{2t}\phi_t(t,x)\d x \d t \\
    &=\int_0^{2(1-m)T}\int_{x/(2(1-m))}^T \frac{1}{2t}\phi_t(t,x)\d t \d x\\
    &=\int_0^{2(1-m)T}\int_{x/(2(1-m))}^T \frac{1}{2t^2} \phi(t,x)\d t \d x - \int_0^{2(1-m)T}\frac{1-m}{x}\phi\left(\frac{x}{2(1-m)},x\right) \d x\\
    &=\int_0^{2(1-m)T}\int_{x/(2(1-m))}^T \frac{1}{2t^2} \phi(t,x)\d t \d x -
    \int_0^{T}\frac{1-m}{t}\phi(t,2(1-m)t)\d t.
\end{align*}
As for the second term a simple integration by parts yields
\begin{align*}
	I_2 &=\int_0^T\int_0^{2(1-m)t}\frac{x}{2t^2} \phi_x(t,x)\d x\d t\\
    &= - \int_0^T\int_0^{2(1-m)t}\frac{1}{2t^2}\phi(t,x)\d x \d t + \int_0^T \frac{1-m}{t} \phi(t,2(1-m)t)\d t.
\end{align*}
Thus we get
\begin{align*}
	I &= I_1 + I_2=0.
\end{align*}
Now, we need to check that $(\tilde \rho, \tilde\eta)$ satisfies the weak formulation. Here we only check the statement for $\tilde\rho$ as the second species is shown analogously. Again we compute the velocity field for $\tilde \rho$.
 \begin{align*}
  u(t,x)&=N'\star(\tilde \eta - \tilde \rho)(t,x)\\
   &= (1-m)\big(\sign(x-1+(1-m)t)-\sign(x-(1-m)t)\big).
 \end{align*}
 Note that $u(t,0) = 0$ and $u(t,(1-m)t)= -(1-m)$. Thus there holds
 \begin{align*}
  &\int_0^T\int_\R \tilde \rho \bigg[\phi_t(t,x) - u(t,x) \phi_x(t,x)\bigg]\d x \d t\\
  &=m\int_0^T \phi_t(t,0) + 0\phi_x(t,0)\d t\\
  &\quad + (1-m)\int_0^T \phi_t(t,(1-m)t) + (1-m) \phi_x(t,(1-m)t)\d t\\
  &=\int_0^T m\ddt \phi(t,0) + (1-m)\ddt \phi(t,(1-m)t) \d t\\
  &=0,
 \end{align*}
 by the fundamental theorem of calculus.

\subsubsection{Strong solutions in the pseudo-inverse formalism}\label{sssec:ss2}Let us now consider the associated pseudo-inverse functions $X$ and $Y$, given by
\begin{align*}
	X(t,z) = 2t (z-m) \mathbf{1}_{[m,1]}(z), \quad\mbox{and}\quad Y(t,z) = \big(1-2t (1-z)\big) \mathbf{1}_{[m,1]}(z),
\end{align*}
for $z\in[0,1]$ and $0\leq t < T$.
 $(X,Y)$ is a solution to system \eqref{eq:system-pseudo-inverse} in a strong sense, in fact
\begin{align*}
&\int_0^1 \sign(X(z) - X(\xi)) \d \xi - \int_0^1 \sign(X(z) - Y(\xi)) \d \xi\\
 &=\int_0^1 \sign\bigg(2t (z-m) \mathbf{1}_{[m,1]}(z) - 2t (\xi-m) \mathbf{1}_{[m,1]}(\xi)\bigg)\d \xi\\
 &\quad - \int_0^1 \sign\bigg(2t (z-m) \mathbf{1}_{[m,1]}(z)  - \big(1-2t (1-\xi)\big) \mathbf{1}_{[m,1]}(\xi)\bigg) \d \xi\\
&=(m-1)\mathbf{1}_{[0,m]}(z)+(2z-1)\mathbf{1}_{[m,1]}(z)+(1-m)\mathbf{1}_{[0,m]}(z)+(1-2m)\mathbf{1}_{[m,1]}(z)\\
&=2(z-m)\mathbf{1}_{[m,1]}(z)\\
&=\frac{\partial}{\partial t}X(t,z),
\end{align*}
and
\begin{align*}
&\int_0^1 \sign(Y(z) - Y(\xi)) \d \xi - \int_0^1 \sign(Y(z) - X(\xi)) \d \xi\\
 &=\int_0^1 \sign\bigg(\big(1-2t(1-z)\big)\mathbf{1}_{[m,1]}(z) - \big(1-2t(1-\xi)\big)\mathbf{1}_{[m,1]}(\xi)\bigg)\d \xi\\
 &\quad - \int_0^1 \sign\bigg(\big(1-2t(1-z)\big)\mathbf{1}_{[m,1]}(z)  - 2t(\xi-m)\mathbf{1}_{[m,1]}(\xi)\bigg) \d \xi\\
&=(m-1)\mathbf{1}_{[0,m]}(z)+(2z-1)\mathbf{1}_{[m,1]}(z)+(1-m)\mathbf{1}_{[0,m]}(z)-\mathbf{1}_{[m,1]}(z)\\
&=2(z-1)\mathbf{1}_{[m,1]}(z)\\
&=\frac{\partial}{\partial t} Y(t,z),
\end{align*}
as we claimed. Moreover, the pair $(\tilde X,\tilde Y)$ of pseudo-inverses associated to the moving Diracs, i.e.,
\begin{align*}
 \tilde X(t,z) = (1-m)t \mathbf{1}_{[m,1]}(z), \qquad \tilde Y(t,z) = \big(1-(1-m)t\big) \mathbf{1}_{[m,1]}(z),
\end{align*}
is another strong solution to system \eqref{eq:system-pseudo-inverse}, since
\begin{align*}
 &\int_0^1 \sign(X(z) - X(\xi)) \d \xi - \int_0^1 \sign(X(z) - Y(\xi)) \d \xi\\
 &=\int_0^1 \sign((1-m)t\mathbf{1}_{[m,1]}(z) - (1-m)t\mathbf{1}_{[m,1]}(\xi))\d \xi \\
 &\quad -\int_0^1 \sign((1-m)t \mathbf{1}_{[m,1]}(z) - (1-(1-m)t) \mathbf{1}_{[m,1]}(\xi)) \d \xi\\
 &= (1-m)\mathbf{1}_{[m,1]}(z)\\
 &=\frac{\partial}{\partial t} \tilde X(t,z),
\end{align*}
and, repeating the same computation for $\tilde Y(t,z)$, we have that $\frac{\partial}{\partial t}\tilde Y(t,z)=-(1-m)\mathbf{1}_{[m,1]}(z)$.

\subsubsection{Characterisation of the sub-differential}\label{sssec:csd2}
We notice that both solutions satisfy the assumptions of Proposition \ref{prop:overlapping-deltas-func+sub}. Since for $(t,z)\in[0,T)\times[0,m)$
$$
\frac{\partial}{\partial t}\begin{pmatrix}
X(t,z)\\
Y(t,z)
\end{pmatrix}=\begin{pmatrix}
0\\
0
\end{pmatrix}
$$
and, for $(t,z)\in[0,T)\times[m,1]$,
$$
\frac{\partial}{\partial t}\begin{pmatrix}
X(t,z)\\
Y(t,z)
\end{pmatrix}=\begin{pmatrix}
2(z-m)\\
2(z-1)
\end{pmatrix},
$$
we  have $
\frac{\partial}{\partial t}\begin{pmatrix}
X(t,z)\\
Y(t,z)
\end{pmatrix}\in-\partial\bar{\mathfrak{F}}\big[\big(X(t,z),Y(t,z)\big)\big],
$ so that we can affirm $(X,Y)$ is a gradient flow solution to system \eqref{eq:system-pseudo-inverse}. In conclusion, $(\tilde{X},\tilde{Y})$ is not a gradient flow solution since $\frac{\partial}{\partial t} \tilde X(t,z)=1-m\neq2(z-m)$, as we claimed.

\begin{remark}\label{rem:example2}
  Unlike the case of two separate Dirac deltas, the phenomenon arising in this example is indeed a distinctive feature of the two species case. This time the inter-specific energy is indeed affected at the singular point, since both species are present at the same position initially. The common mass $m$ at the point zero is driven both by a self-repulsion and by a cross-attraction effect annihilating each other and producing no movement at all as a result. The extra mass of $\rho$ is instead only driven by self-repulsion, and therefore it gets smoothed. At the point $1$, only the smoothing effect occurs, as there is no singular cross-interaction. There is a significant aspect in this solution: \emph{the gradient flow solution maintains a bit of its initial atomic part}, which never happens in the one species case.
\end{remark}

\subsection{Link with hyperbolic systems}\label{subsec:hyp}

In this section we want to highlight the link between system \eqref{eq:full_system} and a particular nonlinear $2\times2$ system of conservation laws in one space dimension, see \cite{Bressan,Dafermos}. Indeed, considering the cumulative distribution functions $F$ and $G$ of $\rho$ and $\eta$ respectively (as defined in \eqref{eq:distribution-functions}), we can rewrite system \eqref{eq:full_system} as
\begin{equation}\label{eq:hypsys}
\begin{cases}
	\partial_t F +2(F-G) \partial_x F=0,\\
    \partial_t G +2(G-F) \partial_x G=0,
\end{cases}
\end{equation}
or in the equivalent matrix form
\begin{align*}
	\begin{pmatrix}
	\partial_t F\\
	\partial_t G
	\end{pmatrix}+
	\left(
    \begin{array}{cc}
    2(F-G)& 0\\
    0     & 2(G-F)
    \end{array}
	\right)
    \cdot\begin{pmatrix}
	\partial_x F\\
	\partial_x G
	\end{pmatrix}=0.
\end{align*}
We stress that the initial condition $F_0, G_0$ for $F$ and $G$ are non-decreasing and achieving values in $[0,1]$, with $F_0(-\infty)=G_0(-\infty)=0$ and $F_0(+\infty)=G_0(+\infty)=1$. System \eqref{eq:hypsys} is hyperbolic, though not strictly, as the eigenvalues are
\begin{align*}
&\lambda_1(F,G)=2(F-G)\\
&\lambda_2(F,G)=2(G-F),
\end{align*}
and $\lambda_1=\lambda_2$ on the diagonal $F=G$. Moreover, system \eqref{eq:hypsys} is \emph{nonconservative}, in the sense that there exists no flux function $\bm{f}:[0,1]\times[0,1]\to\R^2$ such that \eqref{eq:hypsys} can be written as
\begin{equation}\label{eq:hypsiscons}
\partial_t U + \partial_x(\bm{f}(U))=0,\qquad U=(F,G).
\end{equation}
To our knowledge, no general theories on hyperbolic systems currently allow to define a notion of entropy solution for such a system due to the lack of strict hyperbolicity. In particular, there is no canonical way to define a suitable Riemann solver.

The link between \eqref{eq:hypsys} and \eqref{eq:full_system} can be easily established at the level of weak solutions for \eqref{eq:hypsys} with sufficiently smooth initial data, which will correspond to weak solutions for \eqref{eq:full_system}. Such a link is slightly more tricky at the level of discontinuous solutions. On the other hand, the use of the Evolution Variational Inequality for \eqref{eq:full_system} seems to be like a natural way to characterise a solution for \eqref{eq:hypsys} as well. This task will be performed in a future work. In this subsection we will just display the gradient flow solutions found in the previous subsections at the level of the hyperbolic system \eqref{eq:hypsys}, as relevant examples of solutions of Cauchy problems which can be solved via the composition of two Riemann problems.

Let us start by considering the Cauchy problem
\begin{equation}\label{eq:riemannpr}
\begin{cases}
	\partial_t F +2(F-G) \partial_x F=0\\
    \partial_t G +2(G-F) \partial_x G=0
\end{cases},\quad F_0=\begin{cases}
0\quad x<-1\\
1 \quad x\geq -1
\end{cases}, \quad
G_0=\begin{cases}
			0 \quad x<1\\
           1\quad x\geq 1
          \end{cases},
\end{equation}
which correspond to the initial condition for \eqref{eq:full_system}
\begin{align*}
  \rho_0=\delta_{-1},\quad\mbox{and}\quad \eta_0=\delta_1.
 \end{align*}
As anticipated, we construct the solution (only for short time) by solving two separate Riemann problems, i.e.
\begin{equation}\label{eq:riemannprONE}
\begin{cases}
	\partial_t F +2(F-G) \partial_x F=0\\
    \partial_t G +2(G-F) \partial_x G=0
\end{cases},
          \quad F_0=\begin{cases}
0\quad x<-1\\
1 \quad x\geq -1
\end{cases}, \quad
G_0=\begin{cases}
			0 \quad x<-1\\
           0 \quad x\geq -1
          \end{cases},
\end{equation}
and
\begin{equation}\label{eq:riemannprTWO}
\begin{cases}
	\partial_t F +2(F-G) \partial_x F=0\\
    \partial_t G +2(G-F) \partial_x G=0
\end{cases},
\quad F_0=\begin{cases}
1\quad x<1\\
1 \quad x\geq  1
\end{cases}, \quad
G_0=\begin{cases}
			0 \quad x<1\\
           1 \quad x\geq  1
          \end{cases}.
\end{equation}
On the basis of our previous results, we know the solution at the level of pseudo-inverses functions, i.e. for $z\in[0,1]$ and for $t$ small enough,
\begin{align*}
X(t,z)=2tz-1, \qquad \text{and}\qquad Y(t,z)=2t(z-1)+1.
\end{align*}
Computing the corresponding cumulative distributions $F$ and $G$, our candidate solution to problem \eqref{eq:riemannprONE} for $t$ small enough is given by
\begin{align*}
&F(t,x)=\begin{cases}
		0 \qquad &x\le-1,\\
        \dfrac{x+1}{2t} \qquad &-1\le x\le2t-1,\\
        1 \qquad &x\ge2t-1,
	   \end{cases}\\[1em]
&G(t,x)=0 \qquad \mbox{for}\quad x<1,
\end{align*}
whereas for problem \eqref{eq:riemannprTWO} we have
\begin{align*}
&F(t,x)= 1 \qquad \mbox{for}\quad x\ge 2t-1,\\
&G(t,x)=\begin{cases}
		0 \qquad &0\le x<-2t+1,\\
        1+\dfrac{x-1}{2t} \qquad &-2t+1\le x\le1\\
        1 \qquad &x>1.
	   \end{cases}
\end{align*}
The composition of these two solutions for short times is represented in Figure \ref{fig:figura_dist}.
\begin{figure}[!ht]
    \begin{center}
        \begin{minipage}[c]{.48\textwidth}
            \includegraphics[width=1.1\textwidth]{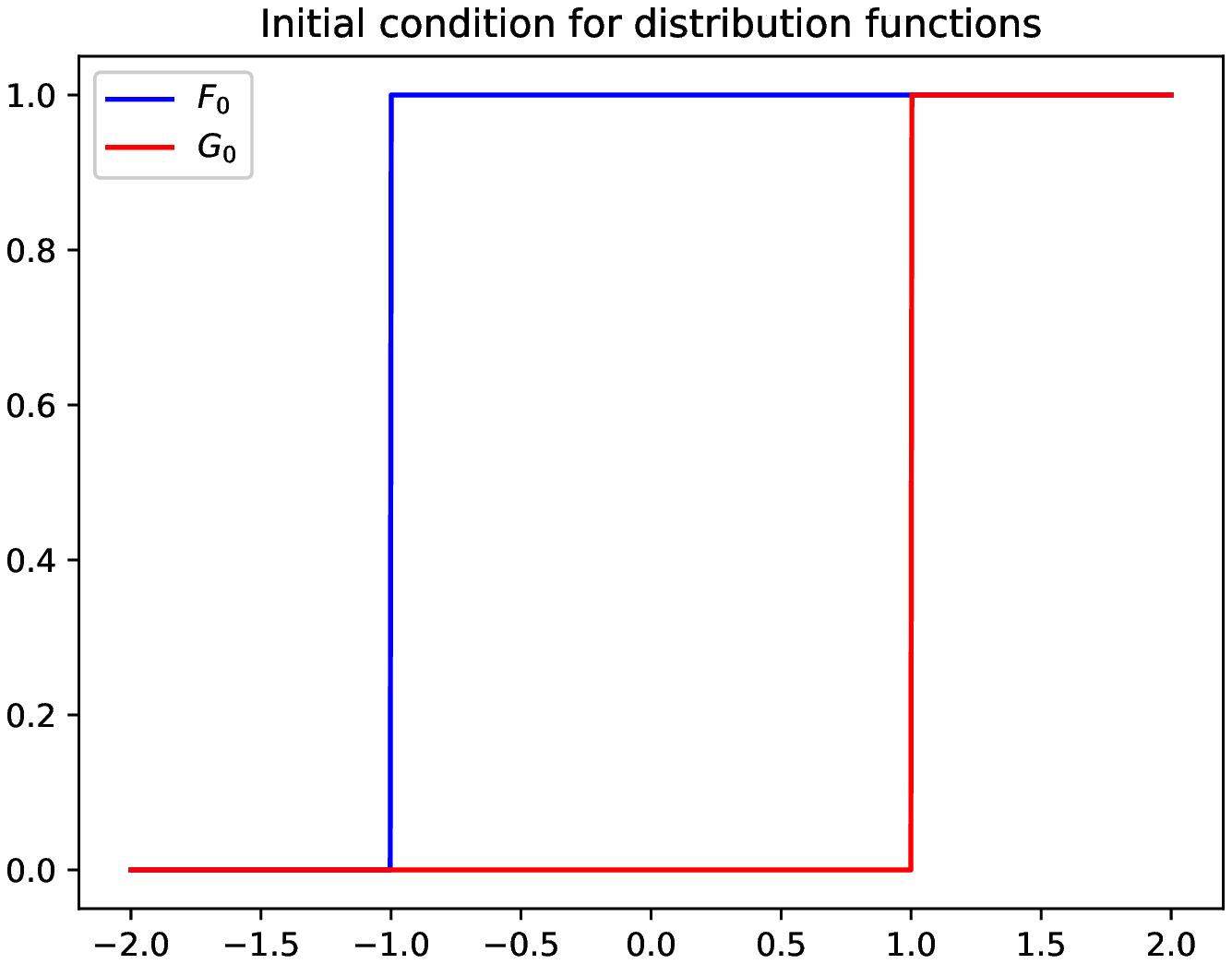}
        \end{minipage}
        \begin{minipage}[c]{.48\textwidth}
            \includegraphics[width=1.1\textwidth]{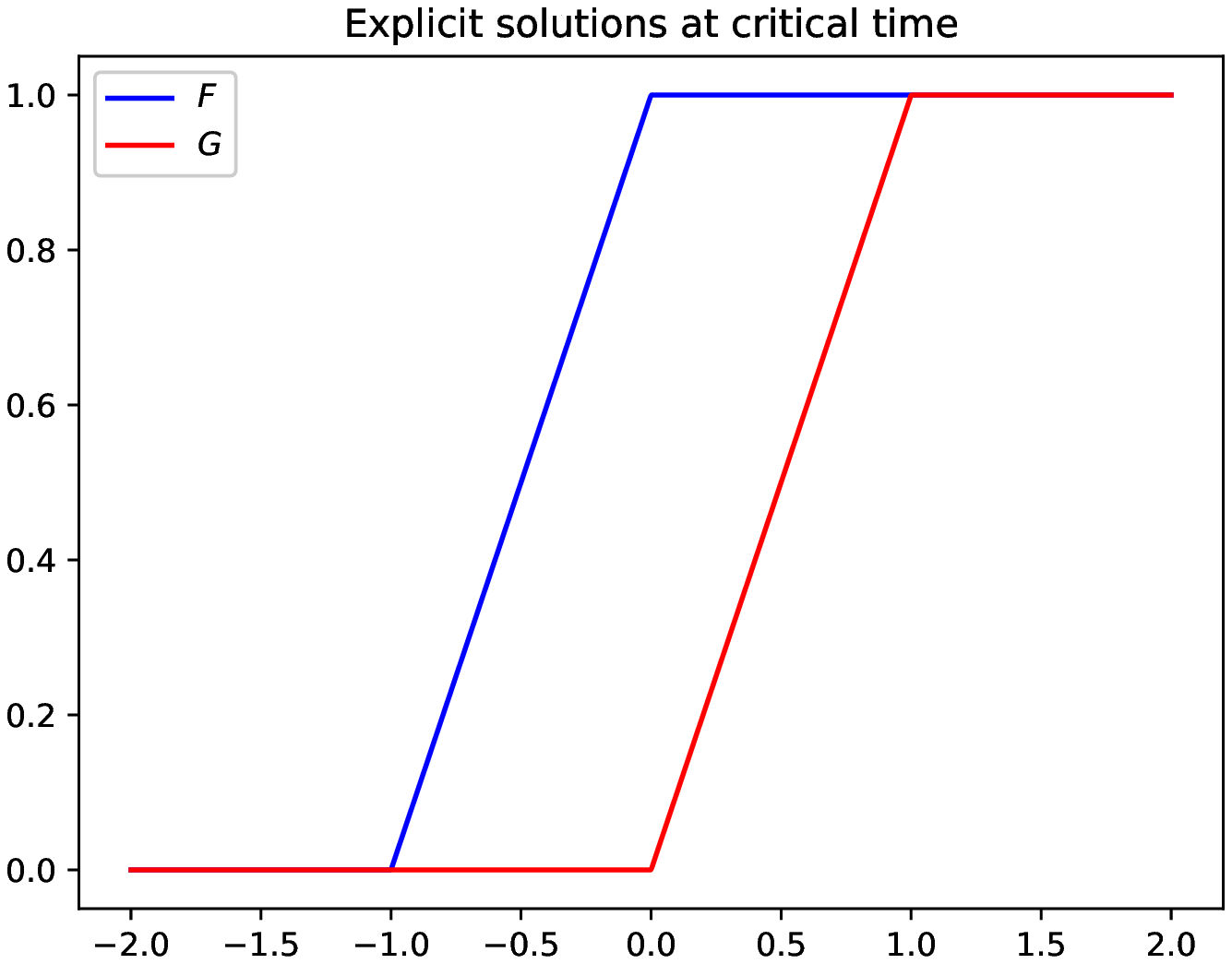}
        \end{minipage}
    \end{center}
    \caption{Initial (left) and exact solution (right) at time $t=0.5$ for the case of two distinct Dirac deltas at the level of distribution functions.}
    \label{fig:figura_dist}
\end{figure}

Let us now consider as initial datum the cumulative distribution functions of
\begin{align*}
  \rho_0=\delta_0,\quad\mbox{and}\quad \eta_0=m \delta_0 + (1-m)\delta_1,
 \end{align*}
as in \eqref{eq:two_over}. As before we have to deal with two different Riemann problems, i.e.
\begin{equation}\label{eq:riemannprONEbis}
\begin{cases}
	\partial_t F +2(F-G) \partial_x F=0\\
    \partial_t G +2(G-F) \partial_x G=0,
\end{cases}
\quad F_0=\begin{cases}
0 \quad x<0\\
1 \quad x\geq 0
\end{cases}, \quad
G_0=\begin{cases}
0 \quad x<0\\
m \quad x\geq 0
          \end{cases}.
\end{equation}
and
\begin{equation}\label{eq:riemannprTWObis}
\begin{cases}
	\partial_t F +2(F-G) \partial_x F=0\\
    \partial_t G +2(G-F) \partial_x G=0,
\end{cases}
\quad F_0=\begin{cases}
1\quad 0<x<1\\
1 \quad x\geq 1
\end{cases}, \quad
G_0=\begin{cases}
			m \quad x<1\\
           1 \quad x\geq 1
          \end{cases}.
\end{equation}
Going back to the results of the previous subsection, the gradient flow solution for the pseudo-inverse system \eqref{eq:system-pseudo-inverse} is given by
\begin{align*}
&X(t,z)=\begin{cases}
		0 \qquad &0\le z<m\\
	    2t(z-m)  &m\le z\le1,
       \end{cases}
&Y(t,z)=\begin{cases}
		0 \qquad &0\le z<m\\
	    2t(z-1)+1  &m\le z\le1,
       \end{cases}
\end{align*}
and then, for $t$ small enough, the candidate solution to problem \eqref{eq:riemannprONEbis} is given by
\begin{align*}
&F(t,x)=\begin{cases}
		0 \qquad &x<0,\\
        m+\dfrac{x}{2t} \qquad &0\le x\le2(1-m)t,\\
        1 \qquad &x>2(1-m)t,
	   \end{cases}\\
&G(t,x)=\begin{cases}
		0 \qquad &x<0,\\
        m \qquad &x\ge0,
	   \end{cases}
\end{align*}
whereas for problem \eqref{eq:riemannprTWObis} we have
\begin{align*}
&F(t,x)= 1 \qquad \mbox{for}\quad x>2(1-m)t,\\
&G(t,x)=\begin{cases}
		m \qquad &0\le x<2(m-1)t+1,\\
        1+\dfrac{x-1}{2t} \qquad &2(m-1)t+1\le x\le1\\
        1 \qquad &x>1,
	   \end{cases}
\end{align*}
see Figure \ref{fig:figura_over}.

\begin{figure}[!ht]
    \begin{center}
        \begin{minipage}[c]{.48\textwidth}
            \includegraphics[width=1.1\textwidth]{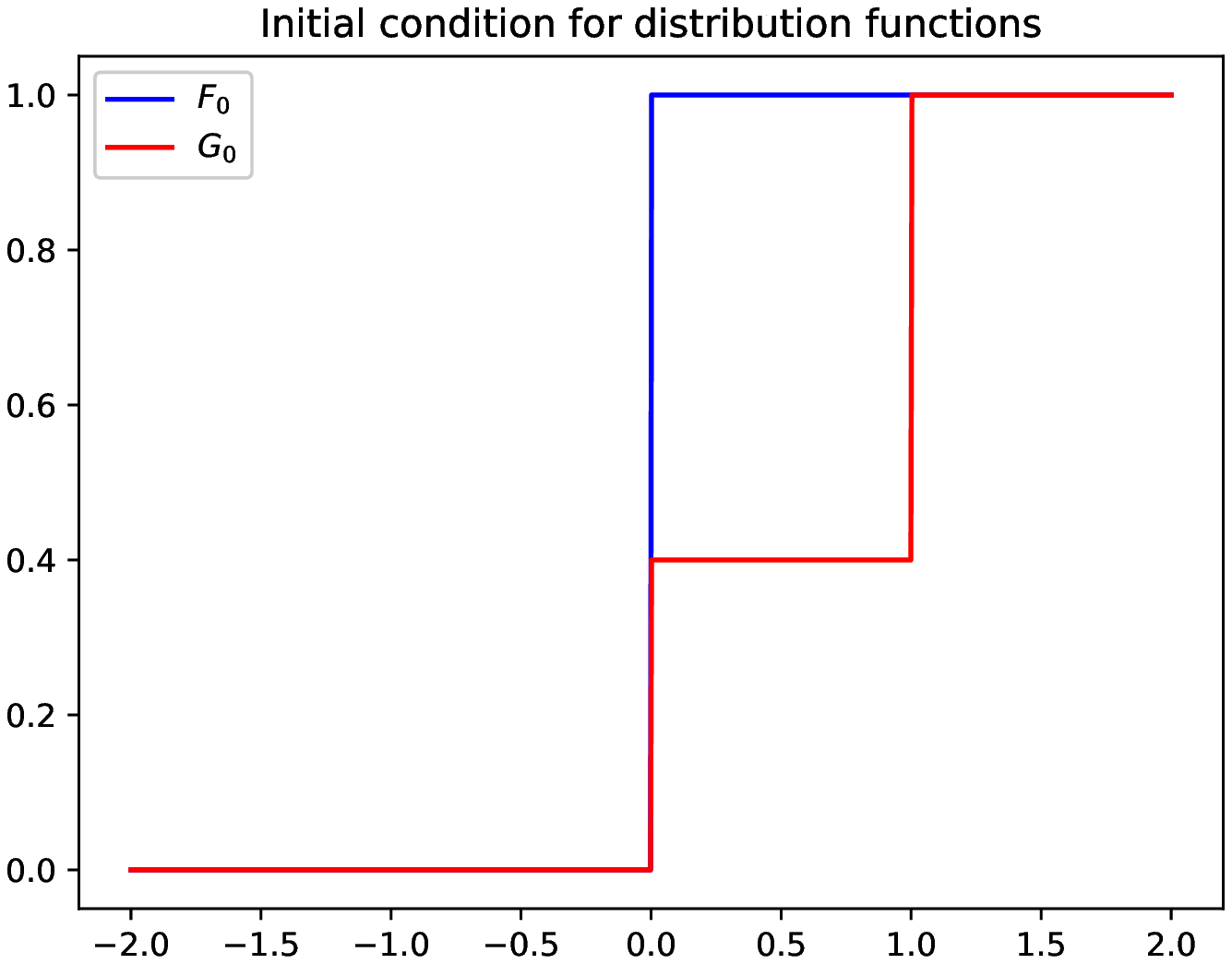}
        \end{minipage}
        \begin{minipage}[c]{.48\textwidth}
            \includegraphics[width=1.1\textwidth]{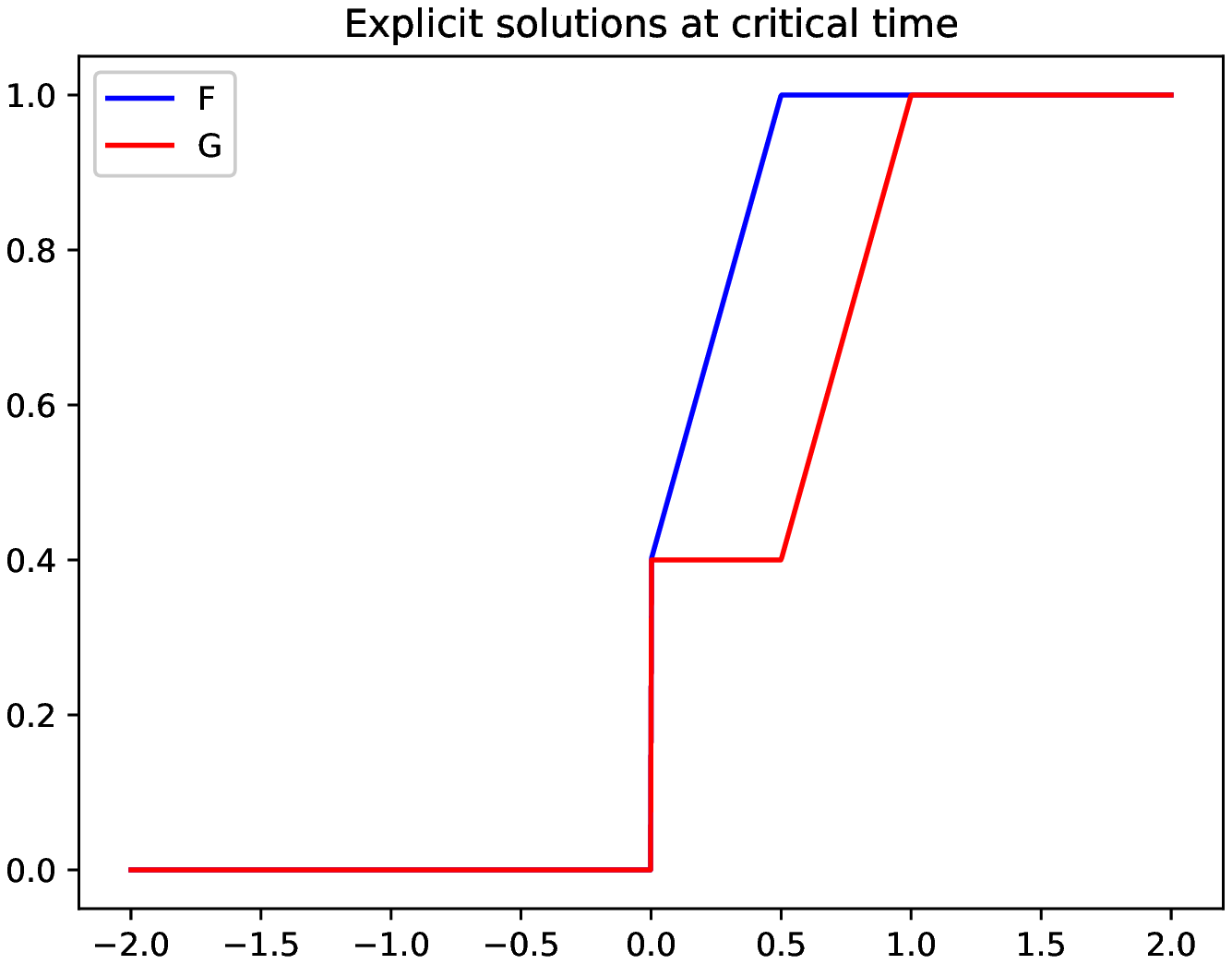}
        \end{minipage}
    \end{center}
    \caption{Initial (left) and exact solution (right) at time $t=1/(4(1-m))
    $ with $m=0.4$ for the case of two partially overlapping deltas at the level of distribution functions.}
    \label{fig:figura_over}
\end{figure}

\section*{Acknowledgements}
JAC was partially supported by the EPSRC grant number  EP/P031587/1. The authors acknowledge discussions with D. Amadori and F. Ancona on the link with hyperbolic systems. MDF, AE, and SF acknowledge support by the EU-funded Erasmus Mundus programme "MathMods - Mathematical models in engineering: theory, methods, and applications" at the University of L’Aquila, by the Italian GNAMPA mini-project "Analisi di modelli matematici della fisica, della biologia e delle scienze sociali", and by the local fund of the University of L'Aquila ‘DP-LAND (Deterministic Particles for Local And Nonlocal Dynamics).

\bibliography{references}

\def\cprime{$'$}
\begin{thebibliography}{10}

\bibitem{AGS}
L.~Ambrosio, N.~Gigli, and G.~Savar{\'e}.
\newblock {\em Gradient flows in metric spaces and in the space of probability
  measures}.
\newblock Lectures in Mathematics ETH Z\"urich. Birkh\"auser Verlag, Basel,
  second edition, 2008.

\bibitem{AS07}
L.~Ambrosio and G.~Savar{\'e}.
\newblock Gradient flows of probability measures.
\newblock {\em Handbook of differential equations: evolutionary equations},
  3:1--136, 2007.

\bibitem{arnold}
A.~Arnold, P.~Markowich, and G.~Toscani.
\newblock On large time asymptotics for drift-diffusion-{P}oisson systems.
\newblock In {\em Proceedings of the Fifth International Workshop on
  Mathematical Aspects of Fluid and Plasma Dynamics (Maui, HI, 1998)},
  volume~29, pages 571--581, 2000.

\bibitem{BCLR2}
D.~Balagu\'e, J.~A. Carrillo, T.~Laurent, and G.~Raoul.
\newblock Dimensionality of local minimizers of the interaction energy.
\newblock {\em Arch. Ration. Mech. Anal.}, 209(3):1055--1088, 2013.

\bibitem{BCP97}
D.~Benedetto, E.~Caglioti, and M.~Pulvirenti.
\newblock A kinetic equation for granular media.
\newblock {\em RAIRO-Mod{\'e}lisation math{\'e}matique et analyse
  num{\'e}rique}, 31(5):615--641, 1997.

\bibitem{BCL}
A.~L. Bertozzi, J.~A. Carrillo, and T.~Laurent.
\newblock Blow-up in multidimensional aggregation equations with mildly
  singular interaction kernels.
\newblock {\em Nonlinearity}, 22(3):683--710, 2009.

\bibitem{BKSUV}
A.~L. Bertozzi, T.~Kolokolnikov, H.~Sun, D.~Uminsky, and J.~von Brecht.
\newblock Ring patterns and their bifurcations in a nonlocal model of
  biological swarms.
\newblock {\em Commun. Math. Sci.}, 13(4):955--985, 2015.

\bibitem{BLL12}
A.~L. Bertozzi, T.~Laurent, and F.~L{\'e}ger.
\newblock Aggregation and spreading via the newtonian potential: the dynamics
  of patch solutions.
\newblock {\em Mathematical Models and Methods in Applied Sciences},
  22(supp01):1140005, 2012.

\bibitem{BLR11}
A.~L. Bertozzi, T.~Laurent, and J.~Rosado.
\newblock Lp theory for the multidimensional aggregation equation.
\newblock {\em Communications on Pure and Applied Mathematics}, 64(1):45--83,
  2011.

\bibitem{Boll-br-loe}
F.~Bolley, Y.~Brenier, and G.~Loeper.
\newblock Contractive metrics for scalar conservation laws.
\newblock {\em J. Hyperbolic Differ. Equ.}, 2(1):91--107, 2005.

\bibitem{B}
G.~A. Bonaschi.
\newblock Gradient flows driven by a non-smooth repulsive interaction
  potential.
\newblock {\em Master’s thesis, University of Pavia, Italy, arXiv:1310.3677},
  2011.

\bibitem{BCDFP}
G.~A. Bonaschi, J.~A. Carrillo, M.~Di~Francesco, and M.~A. Peletier.
\newblock Equivalence of gradient flows and entropy solutions for singular
  nonlocal interaction equations in 1d.
\newblock {\em ESAIM Control, Optimisation and Calculus of Variations},
  21(2):414--441, 2015.

\bibitem{Brenier}
Y.~Brenier.
\newblock L$^2$ formulation of multidimensional scalar conservation laws.
\newblock {\em Arch. Ration. Mech. Anal.}, 193(1):1--19, 2009.

\bibitem{Bressan}
A.~Bressan.
\newblock {\em Hyperbolic systems of conservation laws. The one-dimensional
  Cauchy problem}.
\newblock Oxford Lecture Series in Mathematics and its Applications. Oxford
  University Press, Oxford, 20 edition, 2000.

\bibitem{Brezis}
H.~Br\'{e}zis.
\newblock {\em Op\'{e}rateurs maximaux monotones et semi-groupes de
  contractions dans les espaces de Hilbert}.
\newblock North-Holland Publishing Co., Amsterdam, 1973.

\bibitem{BDFF11}
M.~Burger, M.~{Di Francesco}, and M.~Franek.
\newblock Stationary states of quadratic diffusion equations with long-range
  attraction.
\newblock {\em Communications in Mathematical Sciences}, 11(3):709--738, 2013.

\bibitem{BFH}
M.~Burger, R.~Fetecau, and Y.~Huang.
\newblock Stationary states and asymptotic behavior of aggregation models with
  nonlinear local repulsion.
\newblock {\em SIAM J. Appl. Dyn. Syst.}, 13(1):397--424, 2014.

\bibitem{BDFS}
M.~Burger, M.~Francesco, S.~Fagioli, and A.~Stevens.
\newblock Sorting phenomena in a mathematical model for two mutually
  attracting/repelling species.
\newblock {\em SIAM Journal on Mathematical Analysis}, 50(3):3210--3250, 2018.

\bibitem{CCH1}
V.~Calvez, J.~A. Carrillo, and F.~Hoffmann.
\newblock Equilibria of homogeneous functionals in the fair-competition regime.
\newblock {\em Nonlinear Anal.}, 159:85--128, 2017.

\bibitem{CCH2}
V.~Calvez, J.~A. Carrillo, and F.~Hoffmann.
\newblock The geometry of diffusing and self-attracting particles in a
  one-dimensional fair-competition regime.
\newblock 2186:1--71, 2017.

\bibitem{CHS17}
J.~Carrillo, Y.~Huang, and M.~Schmidtchen.
\newblock Zoology of a nonlocal cross-diffusion model for two species.
\newblock {\em SIAM Journal on Applied Mathematics}, 78(2):1078--1104, 2018.

\bibitem{CCH15}
J.~A. Carrillo, A.~Chertock, and Y.~Huang.
\newblock A finite-volume method for nonlinear nonlocal equations with a
  gradient flow structure.
\newblock {\em Communications in Computational Physics}, 17(01):233--258, 2015.

\bibitem{CCY}
J.~A. Carrillo, K.~Craig, and Y.~Yao.
\newblock Aggregation-diffusion equations: dynamics, asymptotics, and singular
  limits.
\newblock {\em arXiv preprint arXiv:1810.03634}, 2018.

\bibitem{CDM16}
J.~A. Carrillo, M.~G. Delgadino, and A.~Mellet.
\newblock Regularity of local minimizers of the interaction energy via obstacle
  problems.
\newblock {\em Comm. Math. Phys.}, 343(3):747--781, 2016.

\bibitem{CDFFLS}
J.~A. Carrillo, M.~Di~Francesco, A.~Figalli, T.~Laurent, and D.~Slepcev.
\newblock Global-in-time weak measure solutions and finite-time aggregation for
  nonlocal interaction equations.
\newblock {\em Duke Math. J.}, 156(2):229--271, 2011.

\bibitem{CFP12}
J.~A. Carrillo, L.~C. Ferreira, and J.~C. Precioso.
\newblock A mass-transportation approach to a one dimensional fluid mechanics
  model with nonlocal velocity.
\newblock {\em Advances in Mathematics}, 231(1):306--327, 2012.

\bibitem{CFS18}
J.~A. {Carrillo}, F.~{Filbet}, and M.~{Schmidtchen}.
\newblock {Convergence of a Finite Volume Scheme for a System of Interacting
  Species with Cross-Diffusion}.
\newblock {\em ArXiv e-prints}, Apr. 2018.

\bibitem{CHM14}
J.~A. Carrillo, Y.~Huang, and S.~Martin.
\newblock Explicit flock solutions for {Q}uasi-{M}orse potentials.
\newblock {\em European J. Appl. Math.}, 25(5):553--578, 2014.

\bibitem{CMP13}
J.~A. Carrillo, S.~Martin, and V.~Panferov.
\newblock A new interaction potential for swarming models.
\newblock {\em Phys. D}, 260:112--126, 2013.

\bibitem{CT}
J.~A. Carrillo and G.~Toscani.
\newblock Wasserstein metric and large--time asymptotics of nonlinear diffusion
  equations.
\newblock {\em New Trends in Mathematical Physics, (In Honour of the Salvatore
  Rionero 70th Birthday)}, pages 234--244, 2005.

\bibitem{HarCaz}
T.~Cazenave and A.~Haraux.
\newblock {\em An introduction to semilinear evolution equations}.
\newblock Oxford Lecture Series in Mathematics and its Applications, 13. Oxford
  University Press, New York, 1998.

\bibitem{Dafermos}
C.~M. Dafermos.
\newblock {\em Hyperbolic conservation laws in continuum physics. Fourth
  edition}.
\newblock Grundlehren der Mathematischen Wissenschaften [Fundamental Principles
  of Mathematical Sciences]. Springer-Verlag, Berlin, 325 edition, 2016.

\bibitem{DS}
S.~Daneri and G.~Savar\'{e}.
\newblock Eulerian calculus for the displacement convexity in the wasserstein
  distance.
\newblock {\em SIAM J. Math. Anal.}, 40(3):1104--–1122, 2008.

\bibitem{DeGiorgi}
E.~De~Giorgi.
\newblock New problems on minimizing movements.
\newblock {\em Boundary Value Problems for PDE and Applications, C. Baiocchi
  and J. L. Lions eds., Masson}, pages 81–--98, 1993.

\bibitem{DiFranEspFag}
M.~Di~Francesco, A.~Esposito, and S.~Fagioli.
\newblock Nonlinear degenerate cross-diffusion systems with nonlocal
  interaction.
\newblock {\em Nonlinear Analysis}, 169:94--117, 2018.

\bibitem{DFF}
M.~Di~Francesco and S.~Fagioli.
\newblock Measure solutions for nonlocal interaction pdes with two species.
\newblock {\em Nonlinearity}, 26:2777--2808, 2013.

\bibitem{DFM}
M.~Di~Francesco and D.~Matthes.
\newblock Curves of steepest descent are entropy solutions for a class of
  degenerate convection-diffusion equations.
\newblock {\em Calc. Var. Partial Differential Equations}, 50(1--2):199--230,
  2014.

\bibitem{OCBC06}
M.~R. D'Orsogna, Y.-L. Chuang, A.~L. Bertozzi, and L.~S. Chayes.
\newblock Self-propelled particles with soft-core interactions: patterns,
  stability, and collapse.
\newblock {\em Physical review letters}, 96(10):104302, 2006.

\bibitem{evans}
L.~C. Evans.
\newblock {\em {Partial Differential Equations}}, volume~19 of {\em Graduate
  Studies in Mathematics}.
\newblock {American Mathematical Society}, Providence, Rhode Island, 1998.

\bibitem{EK}
J.~H.~M. Evers and T.~Kolokolnikov.
\newblock Metastable states for an aggregation model with noise.
\newblock {\em SIAM J. Appl. Dyn. Syst.}, 15(4):2213--2226, 2016.

\bibitem{FH}
R.~C. Fetecau and Y.~Huang.
\newblock Equilibria of biological aggregations with nonlocal
  repulsive-attractive interactions.
\newblock {\em Phys. D}, 260:49--64, 2013.

\bibitem{FHK}
R.~C. Fetecau, Y.~Huang, and T.~Kolokolnikov.
\newblock Swarm dynamics and equilibria for a nonlocal aggregation model.
\newblock {\em Nonlinearity}, 24(10):2681--2716, 2011.

\bibitem{JKO}
R.~Jordan, D.~Kinderlehrer, and F.~Otto.
\newblock The variational formulation of the fokker--planck equation.
\newblock {\em SIAM journal on mathematical analysis}, 29(1):1--17, 1998.

\bibitem{KSUB}
T.~Kolokolnikov, H.~Sun, D.~Uminsky, and A.~L. Bertozzi.
\newblock Stability of ring patterns arising from two-dimensional particle
  interactions.
\newblock {\em Phys. Rev. E}, 84:015203, Jul 2011.

\bibitem{kru}
S.~Kru{\v{z}}hkov.
\newblock First order quasilinear equations with several independent variables.
\newblock {\em Mat. Sb. (N.S.)}, 81 (123):228--255, 1970.

\bibitem{LSU}
O.~A. Lady{\v{z}}enskaja, V.~A. Solonnikov, and N.~N. Ural'ceva.
\newblock {\em Linear and quasilinear equations of parabolic type}.
\newblock Translated from the Russian by S. Smith. Translations of Mathematical
  Monographs, Vol. 23. American Mathematical Society, Providence, R.I., 1967.

\bibitem{lt}
H.~Li and G.~Toscani.
\newblock Long-time asymptotics of kinetic models of granular flows.
\newblock {\em Arch. Ration. Mech. Anal.}, 172(3):407–--428, 2004.

\bibitem{liu}
T.-P. Liu and M.~Pierre.
\newblock Source solutions and asymptotic behaviour in conservations laws.
\newblock {\em J. Differential Equations}, 51:419--441, 1984.

\bibitem{MMCS}
D.~Matthes, R.~McCann, and G.~Savar\'{e}.
\newblock A family of fourth order equations of gradient flow type.
\newblock {\em Comm. P.D.E.}, 34(11):1352--1397, 2009.

\bibitem{McC97}
R.~J. McCann.
\newblock A convexity principle for interacting gases.
\newblock {\em Advances in mathematics}, 128(1):153--179, 1997.

\bibitem{MEK99}
A.~Mogilner and L.~Edelstein-Keshet.
\newblock A non-local model for a swarm.
\newblock {\em Journal of Mathematical Biology}, 38(6):534--570, 1999.

\bibitem{otto}
F.~Otto.
\newblock The geometry of dissipative evolution equations: the porous medium
  equation.
\newblock {\em Comm. Partial Differential Equations}, 26(1--2):101--174, 2001.

\bibitem{RR}
S.~T. Rachev and L.~R\"{u}schendorf.
\newblock {\em Mass Transportation Problems}, volume~I of {\em Probability and
  Its Applications}.
\newblock Springer, New York, 1998.

\bibitem{S}
F.~Santambrogio.
\newblock {\em Optimal Transport for Applied Mathematicians}, volume~86 of {\em
  Progress in Nonlinear Differential Equations and Their Applications}.
\newblock Birkh\"auser Verlag, Basel, 2015.

\bibitem{TBL06}
C.~M. Topaz, A.~L. Bertozzi, and M.~A. Lewis.
\newblock A nonlocal continuum model for biological aggregation.
\newblock {\em Bull. Math. Biol.}, 68(7):1601--1623, 2006.

\bibitem{Tos00}
G.~Toscani.
\newblock One-dimensional kinetic models of granular flows.
\newblock {\em ESAIM: Mod{\'e}lisation Math{\'e}matique et Analyse
  Num{\'e}rique}, 34(6):1277--1291, 2000.

\bibitem{V1}
C.~Villani.
\newblock {\em Topics in optimal transportation}, volume~58 of {\em Graduate
  Studies in Mathematics}.
\newblock American Mathematical Society, Providence, RI, 2003.

\bibitem{V2}
C.~Villani.
\newblock {\em Optimal transport : old and new}.
\newblock Grundlehren der mathematischen Wissenschaften. Springer, Berlin,
  2009.

\end{thebibliography}
\bibliographystyle{abbrv}

\end{document}